\newcommand{\Xcomment}[1]{}
\newtheorem{theorem}{Theorem}[section]
\newtheorem{prop}[theorem]{Proposition}
\newcommand{\SEC}[1]{\ref{sec:#1}}  
\newcommand{\SSEC}[1]{\ref{ssec:#1}}  
\newcommand{\THEO}[1]{\ref{tm:#1}}  
\makeatletter \@addtoreset{equation}{section} \makeatother
\newenvironment{proof}{\noindent{\bf Proof}~}%
{\hfill$\qed$\medskip}
\def\qed{ \ \vrule width.1cm height.3cm depth0cm}
\newenvironment{numitem}{\refstepcounter{equation}\begin{enumerate}%
\item[(\thesection.\arabic{equation})]$\quad$}{\end{enumerate}}
\newenvironment{numitem1}{\refstepcounter{equation}\begin{enumerate}%
\item[(\thesection.\arabic{equation})]}{\end{enumerate}}
\newcommand{\refeq}[1]{(\ref{eq:#1})}  
\renewcommand{\section}{\@startsection{section}{1}{0pt}%
{-3.5ex plus -1ex minus -.2ex}{2.3ex plus .2ex}%
{\normalfont\Large}}
\renewcommand{\subsection}{\@startsection{subsection}{2}{0pt}%
{-3.0ex plus -1ex minus -.2ex}{1.5ex plus .2ex}%
{\normalfont\normalsize\bf}}
\renewcommand{\subsubsection}{\@startsection{subsubsection}{2}{0pt}%
{-2.0ex plus -1ex minus -.2ex}{-2.0ex plus .2ex}%
{\normalfont\normalsize\underline}}
\def\Rset{{\mathbb R}}
\def\Zset{{\mathbb Z}}
\def\Bscr{{\cal B}}
\def\Cscr{{\cal C}}
\def\Dscr{{\cal D}}
\def\Escr{{\cal E}}
\def\Fscr{{\cal F}}
\def\Hscr{{\cal H}}
\def\Iscr{{\cal I}}
\def\Nscr{{\cal N}}
\def\Pscr{{\cal P}}
\def\Sscr{{\cal S}}
\def\Tscr{{\cal T}}
\def\Xscr{{\cal X}}
\def\Inv{{\rm Inv}}
\def\ident{{\rm id}}
\def\tilde{\widetilde}
\def\hat{\widehat}
\def\eps{\epsilon}
\def\Rin{R_\Sscr^{{\rm in}}}
\def\Rout{R_\Sscr^{{\rm out}}}
\def\Din{\Dscr_\Sscr^{{\rm in}}}
\def\Dout{\Dscr_\Sscr^{{\rm out}}}
\def\Kin{K^{{\rm in}}}
\def\Kout{K^{{\rm out}}}
\def\Kpin{K'^{\,{\rm in}}}
\def\Kpout{K'^{\,{\rm out}}}
\def\Olow{O^{{\rm low}}}
\def\Oup{O^{{\rm up}}}
\def\Sigmalow{\Sigma^{{\rm low}}}
\def\Sigmaup{\Sigma^{{\rm up}}}
\def\Pleft{P^{{\rm left}}}
\def\Pright{P^{{\rm right}}}
\def\Zleft{Z^{{\rm left}}}
\def\Zright{Z^{{\rm right}}}
\def\Gleft{G^{{\rm left}}}
\def\Gright{G^{{\rm right}}}
\def\Kleft{K^{{\rm left}}}
\def\Kright{K^{{\rm right}}}
\def\strong{\hbox{\unitlength=1mm\begin{picture}(8,4)%
\put(3.5,1){\oval(6,3)}\put(1.2,0){{\small str}}\end{picture}}}
\def\weak{\hbox{\unitlength=1mm\begin{picture}(10.5,4)%
\put(5,1){\oval(9,3)}\put(1.2,0){{\small weak}}\end{picture}}}
\begin{document}

 \title{Combined tilings and separated set-systems}

 \author{V.I.~Danilov\thanks{Central Institute of Economics and
Mathematics of the RAS, 47, Nakhimovskii Prospect, Moscow, Russia 117418;
email: danilov@cemi.rssi.ru.}
 \and
A.V.~Karzanov\thanks{Institute for System Analysis at FRC Computer Science and
Control of the RAS, 9, Prospect 60 Let Oktyabrya, Moscow, Russia 117312; email:
sasha@cs.isa.ru. Corresponding author.}
  \and
G.A.~Koshevoy\thanks{Central Institute of Economics and Mathematics of the RAS,
47, Nakhimovskii Prospect, Moscow, Russia 117418; email:
koshevoy@cemi.rssi.ru.}
 }

 \date{}

 \maketitle

 \begin{quote}
 {\bf Abstract.} \small
In 1998, Leclerc and Zelevinsky introduced the notion of \emph{weakly
separated} collections of subsets of the ordered $n$-element set $[n]$ (using
this notion to give a combinatorial characterization for quasi-commuting minors
of a quantum matrix). They conjectured the purity of certain natural domains
$\Dscr\subseteq 2^{[n]}$ (in particular, of the hypercube $2^{[n]}$ itself, and
the hyper-simplex $\{X\subseteq[n]\colon |X|=m\}$ for $m$ fixed), where $\Dscr$
is called \emph{pure} if all maximal weakly separated collections in $\Dscr$
have the same cardinality. These conjectures have been answered affirmatively.

In this paper, generalizing those earlier results, we reveal wider classes of
pure domains in $2^{[n]}$. This is obtained as a consequence of our study of a
novel geometric--combinatorial model for weakly separated set-systems,
so-called \emph{combined (polygonal) tilings} on a zonogon, which yields a new
insight in the area.

 \medskip
{\em Keywords}\,: weakly separated sets, strongly separated sets,
quasi-commuting quantum minors, rhombus tiling, Grassmann necklace

\medskip
{\em Mathematics Subject Classification}\, 05E10, 05B45
 \end{quote}

\parskip=3pt

\section{Introduction}  \label{sec:intr}

For a positive integer $n$, the set $\{1,2,\ldots,n\}$ with the usual order is
denoted by $[n]$. For a set $X\subseteq[n]$ of elements $x_1<x_2<\cdots<x_k$,
we use notation $(x_1,\ldots,x_k)$ for $X$, $\min(X)$ for $x_1$, and $\max(X)$
for $x_k$, where $\min(X)=\max(X):=0$ if $X=\emptyset$.

We will deal with several binary relations on the set $2^{[n]}$ of all subsets
of $[n]$. Namely, for distinct $A,B\subseteq[n]$, we write:
  \begin{numitem1}
  \begin{itemize}
\item[(i)] $A\prec B$ if $A=(a_1,\ldots,a_k)$,
$B=(b_1,\ldots,b_m)$, $k\le m$, and $a_i\le b_i$ for $i=1,\ldots,k$
(\emph{termwise dominating});
\item[(ii)]  $A<B$ if $\max(A)<\min(B)$ (\emph{global dominating});
\item[(iii)]
$A\lessdot B$ if $(A-B)<(B-A)$, where $A'-B'$ denotes $\{i'\colon A'\ni
i'\not\in B'\}$ (\emph{global dominating after cancelations});
\item[(iv)]  $A\rhd B$ if $A-B\ne\emptyset$, and $B-A$
can be expressed as a union of non-empty subsets $B',B''$ so that
$B'<(A-B)<B''$ (\emph{splitting}).
  \end{itemize}
  \label{eq:2relat}
   \end{numitem1}
(Note that relations (i),(ii) are transitive, whereas (iii),(iv) are not in
general.) Relations (iii) and (iv) give rise to two important notions
introduced by Leclerc and Zelevinsky~\cite{LZ} (where these notions appear in
characterizations of quasi-commuting flag minors of a generic $q$-matrix).
\medskip

\noindent \textbf{Definitions.} ~Sets $A,B\subseteq[n]$ are called
\emph{strongly separated} (from each other) if $A\lessdot B$ or $B\lessdot A$
or $A=B$. ~Sets $A,B\subseteq[n]$ are called \emph{weakly separated} if either
they are strongly separated, or $A\rhd B$ and $|A|\ge |B|$, or $B\rhd A$ and
$|B|\ge|A|$. Accordingly, a collection $\Fscr\subseteq 2^{[n]}$ is called
strongly (resp. weakly) separated if any two of its members are such. For
brevity, we refer to strongly and weakly separated collections as
\emph{s-collections} and \emph{w-collections}, respectively. \medskip

In what follows, we will distinguish one or another set-system $\Dscr\subseteq
2^{[n]}$, referring to it as a ground collection, or a \emph{domain}. We are
interested in the situation when (strongly or weakly) separated collections in
$\Dscr$ possess the property of \emph{max-size purity}, which means the
following.
\medskip

\noindent \textbf{Definitions.} ~Let us say that a domain $\Dscr$ is
\emph{s-pure} if all (inclusion-wise) maximal s-collections in $\Dscr$ have the
same cardinality, which in this case is called the \emph{s-rank} of $\Dscr$ and
denoted by $r^s(\Dscr)$. Similarly, we say that $\Dscr$ is \emph{w-pure} if all
maximal w-collections in $\Dscr$ have the same cardinality, called the
\emph{w-rank} of $\Dscr$ and denoted by $r^w(\Dscr)$.
\medskip

(In topology, the term ``pure'' is often applied to complexes in which all
maximal cells have the same dimension. In our case we can interpret each
s-collection (resp. w-collection) as a cell, forming an abstract simplicial
complex with $\Dscr$ regarded as the set of zero-dimensional cells. This
justifies our terms ``s-pure'' and ``w-pure''.)

Leclerc and Zelevinsky~\cite{LZ} proved that the maximal domain
(\emph{hypercube}) $\Dscr=2^{[n]}$ is s-pure and conjectured that $2^{[n]}$ is
w-pure as well (in which case there would be
$r^w(2^{[n]})=r^s(2^{[n]})=\frac{n(n+1)}{2}+1$). A sharper version of this
conjecture deals with $\omega$-\emph{chamber sets} $X\subseteq[n]$ for a
permutation $\omega$ on $[n]$, where $X$ obeys the condition:
  \begin{numitem}  \label{eq:omega_chamber}
  if $i<j$, ~$\omega(i)<\omega(j)$, and $j\in X$, then $i\in X$.
  \end{numitem}
They conjectured that the domain $\Dscr(\omega)$ consisting of the
$\omega$-chamber sets is w-pure (in our terms), with the w-rank equal to
$|\Inv(\omega)|+n+1$. Here $\Inv(\omega)$ denotes the set of \emph{inversions}
of $\omega$ (i.e., pairs $(i,j)$ in $[n]$ such that $i<j$ and
$\omega(i)>\omega(j)$), and the number $|\Inv(\omega)|$ is called the
\emph{length} of $\omega$. For the \emph{longest} permutation $\omega_0$ (where
$\omega_0(i)=n-i+1$), we have $\Dscr({\omega_0})=2^{[n]}$.

This conjecture was proved in~\cite{DKK2}. The key part consisted in proving
the w-purity for $2^{[n]}$; using this, the w-purity was shown for an arbitrary
permutation $\omega$, and more.

  \begin{theorem}[\cite{DKK2}] \label{tm:LZconj}
The hypercube $2^{[n]}$ is w-pure. As a consequence, the following domains
$\Dscr$ are w-pure as well:
  \begin{itemize}
\item[{\rm(i)}] $\Dscr=\Dscr(\omega)$ for any permutation
$\omega$ on $[n]$;
\item[{\rm(ii)}] $\Dscr=\Dscr({\omega',\omega})$, where
$\omega',\omega$ are two permutations on $[n]$ with $\Inv(\omega')\subset
\Inv(\omega)$, and $\Dscr({\omega',\omega})$ is formed by the $\omega$-chamber
sets $X\subseteq [n]$ satisfying the additional condition: if $i<j$,
~$\omega'(i)>\omega'(j)$, and $i\in X$, then $j\in X$; furthermore,
$r^w(\Dscr({\omega,\omega'}))=|Inv(\omega)|-|Inv(\omega')|+n+1$;
\item[{\rm(iii)}] $\Dscr=\Delta_n^{m',m}:=
\{X\subseteq[n]\colon m'\le |X|\le m\}$ for any $m'\le m$; furthermore,
$r^w(\Delta_n^{m',m})=\binom{n+1}{2}-\binom{n-m+1}{2}-\binom{m'+1}{2}+1$ (this
turns into $m(n-m)+1$ when $m'=m$).
  \end{itemize}
  \end{theorem}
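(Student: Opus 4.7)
The plan is to prove the core claim---that the hyper-cube $2^{[n]}$ is w-pure---via the geometric model promised in the abstract, and then deduce (i)--(iii) by restricting the model to appropriate sub-regions.

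First I set up the geometric picture. Fix $n$ generic vectors $\xi_1,\ldots,\xi_n$ in the upper half-plane, ordered by slope, and let $Z=Z_n$ be the zonogon they generate. Identify each $A\subseteq[n]$ with the lattice point $\sum_{i\in A}\xi_i$, so that $2^{[n]}$ becomes exactly the set of lattice vertices of $Z$. A known starting observation (due in essence to Leclerc--Zelevinsky) is that maximal s-collections in $2^{[n]}$ are precisely the vertex sets of rhombus tilings of $Z$, and an elementary area/Euler count shows every such tiling has $\binom{n+1}{2}+1$ vertices, giving s-purity.

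The heart of the argument is to enlarge this picture to capture weak separation: introduce \emph{combined tilings} of $Z$ in which the tiles are either rhombi (corresponding to strongly separated pairs of sets) or certain ``fat'' polygonal pieces that encode the splitting relation $A\rhd B$ between sets of equal size. The three things to verify are: (a) the vertex set of any combined tiling is a w-collection in $2^{[n]}$; (b) every maximal w-collection $\Fscr\subseteq 2^{[n]}$ is realized as the vertex set of some combined tiling, via a constructive reconstruction that assembles tiles around the elements of $\Fscr$; and (c) the number of vertices of any combined tiling of $Z$ equals $\binom{n+1}{2}+1$ again, because rhombi and fat tiles contribute to an Euler-type identity in a controlled way. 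Step (b)---the passage from an abstract maximal w-collection to an actual tiling---is where I expect the bulk of the technical work to sit; it requires showing that local mutation moves on maximal w-collections correspond to elementary re-tilings, and that from any maximal $\Fscr$ one can always fill in a valid combined tiling of all of $Z$.

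To derive (i)--(iii), I would realize each listed domain as the set of lattice vertices lying inside a distinguished sub-region $Y$ of $Z$: a sub-zonogon cut off by a monotone lattice path $\pi_\omega$ associated with a reduced word for $\omega$ in case (i); the strip between two such paths $\pi_{\omega'}$ and $\pi_\omega$ in case (ii); and the horizontal slice $m'\le|A|\le m$ in case (iii). Then maximal w-collections in the sub-domain are exactly the vertex sets of combined tilings of $Y$, and the same Euler-type count yields the claimed w-ranks---in particular, in (ii) the count evaluates to $|\Inv(\omega)|-|\Inv(\omega')|+n+1$, matching the stated formula. The remaining auxiliary obstacle is to check that maximal w-collections in the sub-domain extend to combined tilings of $Y$ without tiles crossing its boundary, which I expect will follow from a boundary-respecting version of the reconstruction in (b), applied along the paths $\pi_\omega$, $\pi_{\omega'}$ or the horizontal cuts.
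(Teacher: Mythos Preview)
Your steps (a) and (c) match the paper's approach and are not the difficulty. The genuine gap is in step (b), and it is a circularity.

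You propose to reach (b) by showing that ``local mutation moves on maximal w-collections correspond to elementary re-tilings.'' That correspondence is true and is exactly what the paper establishes (weak flips on $\Fscr$ $\leftrightarrow$ raising/lowering flips on the combi $K$). But this only lets you pass from a combi to another combi; it does not show that an \emph{arbitrary} maximal w-collection $\Fscr$ is the spectrum of some combi. For that you would need to know that the flip graph on maximal w-collections is connected (so that $\Fscr$ can be reached from $\Iscr_n$ by flips), and connectivity is essentially equivalent to the w-purity you are trying to prove. Your alternative phrasing, ``from any maximal $\Fscr$ one can always fill in a valid combined tiling of $Z$,'' is just a restatement of (b), not a method.

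The paper (see Remark~2 and the cited~\cite{DKK2}) breaks the circularity differently. First, (a)+(c) together show only that combi spectra are \emph{largest} w-collections (size $\binom{n+1}{2}+1$); this much is non-circular. The remaining step, that every \emph{maximal} w-collection is largest, is not attacked via tilings directly. Instead it is reduced (in~\cite{DKK2}) to a purely order-theoretic statement: for any largest w-collection $\Fscr$, the partial order $A\prec^\ast B \Leftrightarrow (A\lessdot B \ \&\ |A|\le|B|)$ is a lattice. This lattice property is then proved by identifying $\prec^\ast$ with the natural acyclic order on the vertices of the combi associated to $\Fscr$, using the $n$-contraction/expansion machinery inductively. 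Your outline is missing this lattice step entirely, and without it (or some substitute) step (b) does not go through.

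For the consequences (i)--(iii): your idea of cutting $Z$ by paths and tiling sub-regions is close in spirit to what the paper eventually does via cyclic patterns (Section~\SSEC{semi-simple}), but note that the paper \emph{derives} (i)--(iii) from the w-purity of $2^{[n]}$ rather than by an independent tiling count on the sub-region; in particular (iii) is handled by a reduction from~\cite{LZ}, not by tiling a horizontal slab directly. The ``boundary-respecting reconstruction'' you allude to is again an instance of the unresolved step (b), now relative to a sub-region, and inherits the same circularity.
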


Note that (ii) generalizes (i) since $\Dscr(\omega)=\Dscr({\ident,\omega})$,
where $\ident$ is the identical permutation ($\ident(i)=i$). The domain
$\Delta_n^{m',m}$ in~(iii) generalizes the Boolean hyper-simplex, or
\emph{discrete Grassmannian}, $\Delta_n^m:=\Delta_n^{m,m}$. The domains in
cases (i) and (ii) turn out to be s-pure as well, and the w- and s-ranks are
equal; see~\cite{DKK2}. (Note that in general a domain $\Dscr$ may be w-pure
but not s-pure (e.g. for $\Dscr=\Delta_5^2$), and vice versa; also when both w-
and s-ranks exist, they may be different.) Using simple observations
from~\cite{LZ}, one can reduce case (iii) to $2^{[n]}$ as well. In its turn,
the proof of w-purity for $2^{[n]}$ given in~\cite{DKK2} is direct and
essentially relies on a mini-theory of \emph{generalized tilings} developed
in~\cite{DKK1}.

Oh et al.~\cite{OPS} gave another proof for $\Delta_n^m$ using \emph{plabic
tilings} (and relying on a machinery of  \emph{plabic graphs} elaborated
in~\cite{Post}). Moreover, \cite{OPS} established the w-purity for certain
domains $\Dscr\subseteq\Delta_n^m$ related to so-called \emph{Grassmann
necklaces}. They also explained that the w-purity of such necklace domains
implies the w-purity for the $\omega$-chamber domain $\Dscr(\omega)$ with any
permutation $\omega$ (however, it is not clear whether the w-purity of
$\Dscr(\omega',\omega)$ can be obtained directly from results in~\cite{OPS}).

The purpose of this paper is twofold: to extend the above mentioned w-purity
results by demonstrating wider classes of domains whose w-purity follows from
the w-purity of $2^{[n]}$, and to describe a novel geometric--combinatorial
representation of maximal w-collections in $2^{[n]}$ (which is essentially used
to carry out the first task, but also is interesting by its own right and can
find other important applications).

More precisely, in the half-plane $\Rset\times\Rset_{\ge 0}$ we fix $n$ generic
vectors $\xi_1,\ldots,\xi_n$ having equal norms and ordered as indicated around
the origin. Under the linear projection $\Rset^{[n]}\to \Rset^2$ mapping $i$th
orth to $\xi_i$, the image of the solid cube $conv(2^{[n]})$ is a
\emph{zonogon} $Z$ (a central symmetric $2n$-gone), and we naturally identify
each subset $X\subseteq [n]$ with the point $\sum(\xi_i\colon i\in X)$ of $Z$.
A well-known fact is that the maximal s-collections in $2^{[n]}$ one-to-one
correspond to the vertex sets of \emph{rhombus tilings} on $Z$, i.e., planar
subdivisions of $Z$ into rhombi with the same side lengths (cf.~\cite{LZ} where
the correspondence is described in dual terms of pseudo-line arrangements). In
case of w-collections, the construction given in~\cite{DKK1} is more
sophisticated: each maximal w-collection one-to-one corresponds to a
\emph{generalized tiling} (briefly, \emph{g-tiling}), which is viewed as a
certain covering of $Z$ by rhombi which may overlap. So a g-tiling need not be
fully planar; also the maximal w-collection corresponding to the tiling is
represented by a certain subset of its vertices, but not necessarily the whole
vertex set.

The simpler model for w-collections elaborated in this paper deals with a sort
of polygonal complexes on $Z$; they are fully planar and their vertex sets are
exactly the maximal w-collections in $2^{[n]}$. We call them \emph{combined
tilings}, or \emph{c-tilings} for short (as they consist of tiles of two types:
``semi-rhombi'' and ``lenses'', justifying the term ``combined''). A nice
planar structure and appealing properties of c-tilings will enable us to reveal
new classes of w-pure domains.

A particular class (extending that in~\cite{OPS} and a more general
construction in~\cite{DKK3}) is generated by a \emph{cyclic pattern}. This
means a sequence $\Sscr=(S_1,S_2,\ldots,S_r=S_0)$ of different weakly separated
subsets of $[n]$ such that, for each $i$, $|S_i-S_{i-1}|\le 1$ and
$|S_{i-1}-S_i|\le 1$. We call $\Sscr$ \emph{simple} if $|S_{i-1}|\ne|S_i|$
holds for each $i$, and \emph{generalized} otherwise. Considering the members
of $\Sscr$ as the corresponding points in the zonogon $Z$, we connect the pairs
of consecutive points by straight-line segments, obtaining the piecewise linear
curve $\zeta_\Sscr$. We show that $\zeta_\Sscr$ is non-self-intersecting when
$\Sscr$ is simple, and give necessary and sufficient conditions on $\Sscr$ to
provide that $\zeta_\Sscr$ is non-self-intersecting in the generalized case.
This allows us to represent the collection $\Dscr_\Sscr$ of all subsets of
$[n]$ weakly separated from $\Sscr$ as the union of two domains $\Din$ and
$\Dout$ whose members (points) are located inside and outside $\zeta_\Sscr$,
respectively. Domains of type $\Din$ generalize the ones in
Theorem~\THEO{LZconj} and the ones described in the necklace constructions
of~\cite{OPS}.

We show that both domains $\Din$ and $\Dout$ are w-pure. Moreover, these
domains form a \emph{complementary pair}, in the sense that any $X\in\Din$ and
any $Y\in\Dout$ are weakly separated from each other (proving a conjecture on
generalized necklaces in~\cite{DKK3}). The latter property follows from the
observation that for any two c-tilings $T$ and $T'$ on $Z$ whose vertex sets
contain the given cyclic pattern $\Sscr$, one can extract and exchange their
portions lying inside $\zeta_\Sscr$, obtaining two correct c-tilings again.

The most general case of w-pure domains that we describe is obtained when the
role of a pattern is played by a \emph{planar graph} $\Hscr$, embedded in a
zonogon, whose vertices form a w-collection $\Sscr$, and edges are given by
pairs similar to those in cyclic patterns. Then, instead of two domains
$\Din,\Dout$ as above, we deal with a set of domains $\Dscr^F_\Sscr$, each
being associated with a face $F$ of $\Hscr$. We prove that any two of them form
a complementary pair. It follows that each $\Dscr^F_\Sscr$ is w-pure, as well
as any union of them.

Note that for cyclic patterns consisting of strongly separated sets, we can
consider analogous pairs of domains w.r.t. the strong separation relation. One
shows that such domains are s-pure (which is relatively easy) and that the
corresponding s-ranks and w-ranks are equal.
\smallskip

This paper is organized as follows. Section~\SEC{cyclic} considers simple
cyclic patterns $\Sscr$ and related domains $\Din$ and $\Dout$ and state a
w-purity result for them (Theorem~\ref{tm:cyc_pattern}). Section~\SEC{tilings}
starts with a brief review on rhombus tilings and then introduces combined
tilings. The most attention in this section is drawn to transformations of
c-tilings, so-called \emph{raising} and \emph{lowering flips}, which correspond
to standard mutations for maximal w-collections. As a result, we establish a
bijection between the c-tilings on the zonogon and the maximal w-collections in
$2^{[n]}$. Section~\SEC{proofs} further develops our ``mini-theory'' of
c-tilings by describing operations of \emph{contraction} and \emph{expansion}
(analogous to those elaborated for g-tilings in~\cite[Sec.~8]{DKK1} but looking
more transparent for c-tilings). These operations transform c-tilings on the
$n$-zonogon to ones on the $(n-1)$-zonogon, and conversely. This enables us to
conduct needed proofs by induction on $n$, and using this and results from
Sect.~\SEC{tilings}, we finish the proof of Theorem~\ref{tm:cyc_pattern}. The
concluding Sect.~\SEC{gen} is devoted to special cases, illustrations and
generalizations. Here we extend the w-purity result to generalized cyclic
patterns (Theorem~\ref{tm:gen_cyc_pattern}), and finish with planar graph
patterns (Theorem~\ref{tm:gen-gen}).
\medskip

Note that the preliminary version~\cite{DKK4} of this paper gave one more
combinatorial construction of w-pure domains (based on lattice paths in certain
ladder diagrams), which is omitted here to make our description more compact.
 \medskip

 \noindent
\textbf{Additional notations.}
An \emph{interval} in $[n]$ is a set of the form $\{p,p+1,\ldots,q\}$, and a
\emph{co-interval} is the complement of an interval to $[n]$. For $p\le q$, we
denote by $[p..q]$ the interval $\{p,p+1,\ldots,q\}$.

When sets $A,B\subseteq[n]$ are weakly (strongly) separated, we write $A\weak
B$ (resp. $A\strong B$). We use similar notation $A\weak \Bscr$ (resp.
$A\strong \Bscr$) when a set $A\subseteq[n]$ is weakly (resp. strongly)
separated from all members of a collection $\Bscr\subseteq 2^{[n]}$.

For a set $X\subset[n]$, distinct elements $i,\ldots,j\in[n]-X$, and an element
$k\in X$, we abbreviate $X\cup\{i\}\cup\ldots\cup\{j\}$ as $Xi\ldots j$, and
$X-\{k\}$ as $X-k$.

By a \emph{path} in a directed graph, we mean a sequence
$P=(v_0,e_1,v_1,\ldots,e_k,v_k)$, where each $e_i$ is an edge connecting
vertices $v_{i-1}$ and $v_i$. An edge $e_i$ is called \emph{forward}
(\emph{backward}) if it goes from $v_{i-1}$ to $v_i$ (resp.  from $v_i$ to
$v_{i-1}$), and we write $e_i=(v_{i-1},v_i)$ (resp. $e_i=(v_i,v_{i-1})$). The
path is called \emph{directed} if all its edges are forward. Sometimes we will
use for $P$ an abbreviated notation via vertices, writing $P=v_0v_1\ldots v_k$.
\smallskip

In what follows, we will use the following simple fact for $A,B\subset[n]$:
 \begin{numitem1}
~if $A\weak B$ and $|A|\le|B|$, then relations $A\prec B$ and $A\lessdot B$ are
equivalent.
 \label{eq:prec-lessdot}
  \end{numitem1}

Another useful property is as follows (a similar property is valid for the
strong separation as well).

  \begin{prop} \label{pr:complement}
Let domains $\Dscr,\Dscr'\subset 2^{[n]}$ be such that $X\weak Y$ for any $X\in
\Dscr$ and $Y\in\Dscr'$. Let $\Dscr\cup\Dscr'$ be w-pure. Then each of
$\Dscr,\Dscr'$ is w-pure.
  \end{prop}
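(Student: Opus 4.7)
The plan is to show that any maximal w-collection in $\Dscr$ pairs canonically with any maximal w-collection in $\Dscr'$ to form a maximal w-collection in $\Dscr\cup\Dscr'$, and then use the purity of the latter to pin down the common cardinality in each piece.

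\smallskip

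First, I would pick any maximal w-collection $\Fscr\subseteq\Dscr$ and any maximal w-collection $\Fscr'\subseteq\Dscr'$. The union $\Fscr\cup\Fscr'$ is a w-collection in $\Dscr\cup\Dscr'$: pairs inside $\Fscr$ or inside $\Fscr'$ are weakly separated by assumption, and a mixed pair $X\in\Fscr\subseteq\Dscr$, $Y\in\Fscr'\subseteq\Dscr'$ is weakly separated by the hypothesis $X\weak Y$.

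\smallskip

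Next, I would check that $\Fscr\cup\Fscr'$ is actually \emph{maximal} in $\Dscr\cup\Dscr'$. Suppose some $Z\in\Dscr\cup\Dscr'$ is weakly separated from every element of $\Fscr\cup\Fscr'$. If $Z\in\Dscr$, then $Z\weak\Fscr$, so maximality of $\Fscr$ in $\Dscr$ forces $Z\in\Fscr$; the case $Z\in\Dscr'$ is symmetric.

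\smallskip

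The intersection $\Fscr\cap\Fscr'$ turns out to be forced: I claim $\Fscr\cap\Fscr'=\Dscr\cap\Dscr'$. The inclusion $\subseteq$ is obvious. For $\supseteq$, take $Z\in\Dscr\cap\Dscr'$; applying the hypothesis with $Y:=Z\in\Dscr'$ shows $Z\weak X$ for every $X\in\Dscr$, hence $Z\weak\Fscr$, and maximality of $\Fscr$ in $\Dscr$ gives $Z\in\Fscr$. Symmetrically $Z\in\Fscr'$. In particular, $|\Fscr\cap\Fscr'|$ is the constant $|\Dscr\cap\Dscr'|$, independent of the choice of $\Fscr,\Fscr'$.

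\smallskip

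Finally, by the assumed w-purity of $\Dscr\cup\Dscr'$, the cardinality
\[
|\Fscr\cup\Fscr'|=|\Fscr|+|\Fscr'|-|\Dscr\cap\Dscr'|
\]
equals the fixed w-rank $r^w(\Dscr\cup\Dscr')$. Fixing one choice of $\Fscr'$ and letting $\Fscr$ vary over maximal w-collections of $\Dscr$ shows that $|\Fscr|$ is constant, so $\Dscr$ is w-pure; symmetrically for $\Dscr'$. I do not see a serious obstacle: the only subtle point is the identity $\Fscr\cap\Fscr'=\Dscr\cap\Dscr'$, which is what makes the inclusion--exclusion count collapse to a single unknown in each factor.
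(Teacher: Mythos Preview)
Your argument is correct and follows essentially the same route as the paper's own proof: form $\Fscr\cup\Fscr'$, note it is a maximal w-collection in $\Dscr\cup\Dscr'$ with $\Fscr\cap\Fscr'=\Dscr\cap\Dscr'$, and apply inclusion--exclusion with one factor held fixed. You have simply spelled out the ``easy to see'' claims that the paper leaves implicit.
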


Indeed, let $A:=\Dscr\cap\Dscr'$, fix a maximal w-collection $X$ in $\Dscr$,
and take an arbitrary maximal w-collection $Y$ in $\Dscr'$. It is easy to see
that $A=X\cap Y$ and that $X\cup Y$ is a maximal w-collection in
$\Dscr\cup\Dscr'$. We have $|Y|=|X\cup Y|+|A|-|X|$, and now the w-purity of
$\Dscr'$ follows from that of $\Dscr\cup\Dscr'$. The argument for $\Dscr$ is
similar.
\smallskip

We say that domains $\Dscr,\Dscr'$ as in this proposition form a
\emph{complementary pair}.


\section{Simple cyclic patterns and their geometric interpretation}  \label{sec:cyclic}

Recall that by a \emph{simple cyclic pattern} we mean a sequence $\Sscr$ of
subsets $S_1,S_2,\ldots,S_r=S_0$ of $[n]$ such that $|S_{i-1}\triangle S_i|=1$
for $i=1,\ldots,r$, where $A\triangle B$ denotes the symmetric difference
$(A-B)\cup(B-A)$. We assume that $\Sscr$ satisfies the following conditions:
\medskip

\noindent \textbf{(C1)} ~All sets in $\Sscr$ are different; \smallskip

\noindent \textbf{(C2)} ~$\Sscr$ is weakly separated. \smallskip

\noindent (Condition (C1) can be slightly weakened, as we explain in
Section~\SSEC{semi-simple}.) We associate with $\Sscr$ the collection
  $$
  \Dscr_\Sscr:=\{X\subseteq [n] : X \weak \Sscr\}.
  $$
We extract from $\Dscr_\Sscr$ two domains $\Din$ and $\Dout$, using a geometric
construction mentioned in Sect.~1.

More precisely, in the upper half-plane $\Rset\times \Rset_{\ge 0}$, we fix $n$
vectors $\xi_1,\ldots,\xi_n$ which:
  \begin{numitem1} \label{eq:xi_i}
 (a) follow in this order clockwise around the
origin $(0,0)$;

(b) have the \emph{same euclidean norm}, say, $\Vert\xi_i\Vert=1$; and

(c) are $\Zset$-independent (i.e., all integer combinations of these vectors
are different).
  \end{numitem1}
We call these vectors \emph{generators}. Then the set
  $$
Z=Z_n:=\{\lambda_1\xi_1+\cdots+ \lambda_n\xi_n\colon \lambda_i\in\Rset,\;
0\le\lambda_i\le 1,\; i=1,\ldots,n\}
  $$
is a $2n$-gon. Moreover, $Z$ is a {\em zonogon}, as it is the sum of line
segments $\{\lambda\xi_i\colon 0\le \lambda\le 1\}$, $i=1,\ldots,n$. Also it is
the projection of the solid $n$-cube $conv(2^{[n]})$ into the plane, given by
$\pi(x):=x_1\xi_1+\cdots +x_n\xi_n$, $x\in\Rset^{[n]}$. We identify a subset
$X\subseteq [n]$ with the corresponding point $\sum(\xi_i\colon i\in X)$ in
$Z$. Due to the $\Zset$-independence, all such points in $Z$ are different.

The \emph{boundary} $bd(Z)$ of $Z$ consists of two parts: The {\em left
boundary} $lbd(Z)$ formed by the sequence of points (vertices)
$z^\ell_i:=\xi_1+\cdots+\xi_i$ ($i=0,\ldots,n$) connected by the line segments
$z^\ell_{i-1}z^\ell_i$ congruent to $\xi_i$, and the {\em right boundary}
$rbd(Z)$ formed by the sequence of points
$z^r_i:=\xi_n+\xi_{n-1}+\cdots+\xi_{n-i+1}$ ($i=0,\ldots,n$) connected by the
line segments $z^r_{i-1}z^r_{i}$ congruent to $\xi_{n-i+1}$. Then
$z^\ell_0=z^r_0=(0,0)$ is the bottom(most) vertex, and
$z^\ell_n=z^r_n=\xi_1+\cdots+\xi_n$ is the top(most) vertex of $Z$.

Returning to $\Sscr$ as above, we draw the closed piecewise linear curve
$\zeta_\Sscr$ by concatenating the line-segments connecting consecutive points
$S_{i-1}$ and $S_i$ for $i=1,\ldots,r$. The following property is important.
  \begin{prop} \label{pr:Rin-Rout}
For a simple cyclic pattern $\Sscr$, the curve $\zeta_\Sscr$ is
non-self-intersecting, and therefore it divides the zonogon $Z$ into two closed
regions $\Rin$ and $\Rout$, where $\Rin\cap \Rout=\zeta_\Sscr$, ~$\Rin\cup
\Rout=Z$, and $bd(Z)\subseteq \Rout$.
   \end{prop}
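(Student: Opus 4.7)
The plan is to reduce the non-self-intersection of $\zeta_\Sscr$ to the planarity of a suitable polygonal subdivision of $Z$, and then to read off the decomposition by a Jordan-curve argument. The main tool will be the theory of \emph{combined tilings} developed in Sections~\SEC{tilings}--\SEC{proofs}, which realizes every maximal w-collection in $2^{[n]}$ as the vertex set of a planar polygonal subdivision of $Z$ in which every pair of vertices $X,Y$ with $|X\triangle Y|=1$ is joined by an edge.

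First I would extend $\Sscr$ to a maximal w-collection $\Fscr\subseteq 2^{[n]}$ by greedy augmentation (possible by~(C2)), and let $T=T(\Fscr)$ be the associated c-tiling. Since $|S_{i-1}\triangle S_i|=1$ for each $i$, each segment $[S_{i-1},S_i]$ will be an edge of $T$, so $\zeta_\Sscr$ is a closed walk in the planar $1$-skeleton of $T$. By~(C1) all the $S_i$ are distinct, so the walk uses each vertex at most once and $\zeta_\Sscr$ is a simple closed polygonal curve inside $Z$. The polygonal Jordan curve theorem then partitions the plane into a bounded interior and an unbounded exterior; I would define $\Rin$ as the closure of the interior component and $\Rout$ as the closure of $Z$ minus that interior, so that $\Rin\cup\Rout=Z$ and $\Rin\cap\Rout=\zeta_\Sscr$ (the statement abbreviates this as $\Sscr$). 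Finally, any $p\in bd(Z)\setminus\zeta_\Sscr$ can be joined to a point outside $Z$ by a short path exiting $Z$ locally and escaping to infinity; such a path avoids $\zeta_\Sscr\subseteq Z$, so $p$ lies in the unbounded exterior component, which yields $bd(Z)\subseteq\Rout$.

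The main obstacle will be justifying that the segments $[S_{i-1},S_i]$ are genuine edges of a planar subdivision, i.e.\ the appeal to the c-tiling machinery. A more elementary route would attempt to verify directly that no two segments of $\zeta_\Sscr$ cross. Pairs of parallel collinear segments are manageable: two collinear unit segments parallel to some $\xi_k$ would force the corresponding endpoints $S_j$ to differ only in coordinate $k$ (using the $\Zset$-independence from~\refeq{xi_i}), which by~(C1) forces the segments to be adjacent or equal in $\Sscr$. However, ruling out non-parallel crossings between segments of the form $[X,Xk]$ and $[Y,Y\ell]$ with $X,Xk,Y,Y\ell$ pairwise weakly separated appears hard without the planar model, so I would defer the full proof of Proposition~\ref{pr:Rin-Rout} until Section~\SEC{proofs}, after the c-tiling framework has been set up.
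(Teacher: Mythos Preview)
Your approach is correct and essentially identical to the paper's: it too extends $\Sscr$ to a maximal w-collection, realizes it as the vertex set of a combi $K$ (Theorem~\ref{tm:max-ws-combi}), invokes Proposition~\ref{pr:X-Xi} to ensure each segment $[S_{i-1},S_i]$ is a V-edge of $G_K$, and then uses the planarity of $G_K$ together with~(C1) to conclude that $\zeta_\Sscr$ is a simple cycle. The paper likewise defers the argument to Section~\SEC{proofs} for exactly the reason you identify, namely that Proposition~\ref{pr:X-Xi} (your ``every pair with $|X\triangle Y|=1$ is joined by an edge'') requires the contraction/expansion machinery.
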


\noindent (So $\Rin$ is a disk and $\Rout$ is viewed as a ``ring''.) This
enables us to define the desired domains:
  \begin{equation} \label{eq:Din-Dout}
\Din:=\Dscr_\Sscr\cap\Rin \qquad \mbox{and} \qquad \Dout:=\Dscr_\Sscr\cap
\Rout.
  \end{equation}

Proposition~\ref{pr:Rin-Rout} is proved in Sect.~\SEC{proofs} relying on
properties of combined tilings introduced in the next section. Moreover, using
such tilings, we will prove the following w-purity result (where a
complementary pair is defined in Sect.~1).
  \begin{theorem} \label{tm:cyc_pattern}
For a simple cyclic pattern $\Sscr$, the domains $\Din$ and $\Dout$ form a
complementary pair. As a consequence, both $\Din$ and $\Dout$ are w-pure.
 \end{theorem}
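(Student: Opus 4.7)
The plan is to reduce the theorem to one key structural property of combined tilings: given any two c-tilings $T,T'$ whose vertex sets contain $\Sscr$, the curve $\zeta_\Sscr$ splits each along $\Sscr$ into an \emph{inside} subtiling (in $\Rin$) and an \emph{outside} subtiling (in $\Rout$), and one may swap the inside of $T$ with that of $T'$ to obtain two new valid c-tilings. This exchange property, combined with Proposition~\ref{pr:Rin-Rout} and the (yet-to-be-established) bijection between c-tilings on $Z$ and maximal w-collections in $2^{[n]}$, will drive everything.

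First I would record that every maximal w-collection $W$ in $\Dscr_\Sscr$ contains $\Sscr$ (by (C2) and the definition of $\Dscr_\Sscr$) and is already maximal in $2^{[n]}$: any extension $\widetilde W\supseteq W$ in $2^{[n]}$ has every member weakly separated from $\Sscr\subseteq W$, hence sits inside $\Dscr_\Sscr$, so $\widetilde W=W$. Thus $W=V(T_W)$ for a unique c-tiling $T_W$, and $\zeta_\Sscr$ cuts $T_W$ into subtilings $T_W^{\rm in},T_W^{\rm out}$ with $V(T_W^{\rm in})\cup V(T_W^{\rm out})=W$ and $V(T_W^{\rm in})\cap V(T_W^{\rm out})=\Sscr$.

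Applying the exchange property to two such c-tilings $T,T'$ and using $|V(T)|=|V(T')|=r^w(2^{[n]})$ from Theorem~\ref{tm:LZconj}, the same equality for each of the two swapped c-tilings forces $|V(T^{\rm in})|=|V(T'^{\rm in})|$ and $|V(T^{\rm out})|=|V(T'^{\rm out})|$. Consequently every maximal w-collection in $\Dscr_\Sscr$ has the common size $|V(T^{\rm in})|+|V(T^{\rm out})|-|\Sscr|$, so $\Dscr_\Sscr=\Din\cup\Dout$ is w-pure. For the complementary-pair claim, pick $X\in\Din$ and $Y\in\Dout$, and extend $\{X\}\cup\Sscr$ and $\{Y\}\cup\Sscr$ to maximal w-collections $W_X,W_Y$ with c-tilings $T_X,T_Y$. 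Since $X\in\Rin$ we have $X\in V(T_X^{\rm in})$, and symmetrically $Y\in V(T_Y^{\rm out})$. Swapping $T_X^{\rm in}$ with $T_Y^{\rm out}$ produces a c-tiling whose vertex set is a w-collection containing both $X$ and $Y$, so $X\weak Y$. Proposition~\ref{pr:complement}, applied with $\Dscr\cup\Dscr'=\Dscr_\Sscr$ already shown to be w-pure, then gives the w-purity of $\Din$ and $\Dout$ individually.

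The main obstacle is proving the exchange property itself: one must verify that after excising $T^{\rm in}$ from $T$ and gluing in $T'^{\rm in}$ along $\zeta_\Sscr$, the resulting polygonal complex still meets all the defining conditions of a c-tiling (compatible tile shapes across the seam, correct strand-like boundary behavior, global planarity). This relies on the local structure of c-tilings along cyclic patterns together with the raising/lowering flip and contraction/expansion machinery scheduled for Sections~\SEC{tilings}--\SEC{proofs}; Proposition~\ref{pr:Rin-Rout} itself should also fall out of the same analysis.
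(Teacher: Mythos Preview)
Your approach is essentially the paper's: both take $X\in\Din$, $Y\in\Dout$, place $\Sscr\cup\{X\}$ and $\Sscr\cup\{Y\}$ inside c-tilings $K,K'$, split each along $\zeta_\Sscr$, and glue the inside piece of $K$ to the outside piece of $K'$ to obtain a c-tiling whose vertex set contains both $X$ and $Y$; the complementary-pair property and Proposition~\ref{pr:complement} then finish. Your additional paragraph showing directly that maximal w-collections in $\Dscr_\Sscr$ are already maximal in $2^{[n]}$ (hence $\Dscr_\Sscr$ is w-pure) makes explicit what the paper only records parenthetically.

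One calibration: you present the exchange property as the ``main obstacle'' requiring careful verification of tile compatibility across the seam, but in the paper it is essentially immediate once Proposition~\ref{pr:X-Xi} is in hand. Since every consecutive pair $S_{i-1},S_i$ in a simple cyclic pattern differs by one element, that proposition guarantees each segment of $\zeta_\Sscr$ is an actual V-edge of any c-tiling containing $\Sscr$; hence $\zeta_\Sscr$ is a cycle in the tiling's edge graph, no tile is cut, and $\Kin\cup\Kpout$ is automatically a correct combi with nothing further to check. The real work (contraction/expansion) goes into proving Proposition~\ref{pr:X-Xi}, not into the gluing itself.
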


This theorem has a natural strong separation counterpart.
  \begin{theorem} \label{tm:strong_cycl}
Let $\Sscr$ be a cyclic pattern consisting of different pairwise strongly
separated sets in $[n]$. Define $\hat \Dscr_{\Sscr}^{\rm in}$, $\hat
\Dscr_{\Sscr}^{\rm out}$ as in~\refeq{Din-Dout} with $\Dscr_\Sscr$ replaced by
$\hat\Dscr_\Sscr:=\{X\subseteq [n] : X \strong \Sscr\}$. Then $X\strong Y$ for
any $X\in\hat \Dscr_{\Sscr}^{\rm in}$ and $Y\in \hat \Dscr_{\Sscr}^{\rm out}$.
Hence both $\hat \Dscr_{\Sscr}^{\rm in}$ and $\hat \Dscr_{\Sscr}^{\rm out}$ are
s-pure. Also the s- and w-ranks of $\hat\Dscr_{\Sscr}^{\rm in}$ are equal, and
similarly for $\hat\Dscr_{\Sscr}^{\rm out}$.
  \end{theorem}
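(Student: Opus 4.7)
The plan is to reduce Theorem~\ref{tm:strong_cycl} to the classical Leclerc--Zelevinsky correspondence between maximal s-collections in $2^{[n]}$ and rhombus tilings of the zonogon $Z$, exploiting that strong separation refines weak. Since strong separation implies weak, $\Sscr$ is in particular a simple cyclic pattern in the sense of Section~\ref{sec:cyclic}, so Proposition~\ref{pr:Rin-Rout} applies and $\zeta_\Sscr$ divides $Z$ into $\Rin$ and $\Rout$. The feature special to the strongly separated case is that each segment $S_{i-1}S_i$ has unit length and is parallel to some generator $\xi_k$, so $\zeta_\Sscr$ is a closed polygonal path built entirely from generator edges.

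Using this, the central geometric observation is that every rhombus tiling of $Z$ whose vertex set contains $\Sscr$ must have $\zeta_\Sscr$ on its 1-skeleton (a unit generator-parallel segment between two vertices of a rhombus tiling cannot cross the interior of any rhombus, since rhombus diagonals are not generator-parallel), and therefore decomposes uniquely into a rhombus tiling of $\Rin$ and one of $\Rout$ meeting along $\zeta_\Sscr$; conversely, any two such sub-tilings glue back into a rhombus tiling of $Z$. This splicing principle yields the complementary pair property: given $X\in\hat\Dscr_\Sscr^{\rm in}$ and $Y\in\hat\Dscr_\Sscr^{\rm out}$, the sets $\Sscr\cup\{X\}$ and $\Sscr\cup\{Y\}$ are strongly separated and so extend to maximal s-collections $\Ascr,\Bscr$ of $2^{[n]}$, corresponding to rhombus tilings $T_\Ascr,T_\Bscr$. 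Taking the interior part of $T_\Ascr$ (containing $X$) and the exterior part of $T_\Bscr$ (containing $Y$) and gluing along $\zeta_\Sscr$ produces a rhombus tiling of $Z$ whose vertex set is a maximal s-collection of $2^{[n]}$ containing both $X$ and $Y$, hence $X\strong Y$.

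The s-purity of $\hat\Dscr_\Sscr^{\rm in}$ and $\hat\Dscr_\Sscr^{\rm out}$ then follows from the strong-separation analogue of Proposition~\ref{pr:complement} (noted in the paper) applied to the union $\hat\Dscr_\Sscr=\hat\Dscr_\Sscr^{\rm in}\cup\hat\Dscr_\Sscr^{\rm out}$: the s-purity of $\hat\Dscr_\Sscr$ itself is immediate from the rhombus-tiling description, since every maximal s-collection in $\hat\Dscr_\Sscr$ automatically contains $\Sscr$ and is the vertex set of a rhombus tiling of $Z$ through $\zeta_\Sscr$, all of which share the common vertex count $\binom{n+1}{2}+1$.

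The main obstacle is the final assertion that the s- and w-ranks of $\hat\Dscr_\Sscr^{\rm in}$ coincide (likewise for $\hat\Dscr_\Sscr^{\rm out}$). The approach I would take is a splicing count comparing rhombus tilings with combined tilings. Fix an auxiliary rhombus tiling $T_{\rm out}$ of $\Rout$; gluing it with an arbitrary rhombus tiling of $\Rin$ along $\zeta_\Sscr$ produces a rhombus tiling of $Z$ whose vertex set is a maximal s-collection of $2^{[n]}$, of size $\binom{n+1}{2}+1$, while gluing it instead with an arbitrary combined tiling of $\Rin$ (whose vertex set will be shown to be a maximal w-collection of $\hat\Dscr_\Sscr^{\rm in}$, using the theory developed in Section~\ref{sec:tilings} together with the fact that the boundary $\zeta_\Sscr$ is strongly separated) produces a c-tiling of $Z$ whose vertex set is a maximal w-collection of $2^{[n]}$, also of size $\binom{n+1}{2}+1$ by Theorem~\ref{tm:LZconj}. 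Subtracting the common contribution $|V(T_{\rm out})|-|\Sscr|$ forces the rhombus and c-tiling vertex counts on $\Rin$ to agree, giving $r^s(\hat\Dscr_\Sscr^{\rm in})=r^w(\hat\Dscr_\Sscr^{\rm in})$, and the argument for $\hat\Dscr_\Sscr^{\rm out}$ is symmetric. The delicate point, which I expect to be the most technical, is the correspondence just invoked, namely that maximal w-collections in $\hat\Dscr_\Sscr^{\rm in}$ are exactly the vertex sets of combined tilings of $\Rin$ with boundary $\zeta_\Sscr$; this is where the strong-separation hypothesis on $\Sscr$ is used essentially to rule out w-collection extensions to elements of $\Din\setminus\hat\Dscr_\Sscr^{\rm in}$.
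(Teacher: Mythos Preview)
Your argument for the complementary-pair assertion and the s-purity is exactly the paper's: reduce to Theorem~\ref{tm:LZ_strong} and splice rhombus tilings of $\Rin$ and $\Rout$ along $\zeta_\Sscr$. The paper states this in one sentence, and your elaboration (that $\zeta_\Sscr$ consists only of generator-parallel unit segments and hence lies in the $1$-skeleton of any rhombus tiling through $\Sscr$) is precisely the mechanism behind the paper's ``any rhombus tiling on $\Rin$ and any rhombus tiling on $\Rout$ can be combined.''

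For the equality $r^s(\hat\Dscr_\Sscr^{\rm in})=r^w(\hat\Dscr_\Sscr^{\rm in})$, the paper simply writes ``obvious.'' Your splicing-count argument is the natural way to cash this out: gluing a fixed rhombus tiling of $\Rout$ to either a rhombus tiling or a c-tiling of $\Rin$ produces a tiling of $Z$ with $\binom{n+1}{2}+1$ vertices in both cases, forcing the $\Rin$-vertex-counts to agree. This cleanly gives $r^s(\hat\Dscr_\Sscr^{\rm in})=r^w(\Din)$. However, the ``delicate point'' you flag---that maximal w-collections in $\hat\Dscr_\Sscr^{\rm in}$ coincide with vertex sets of c-tilings of $\Rin$---is stronger than needed and, as stated, is equivalent to $\hat\Dscr_\Sscr^{\rm in}=\Din$: if some $B\in\Din\setminus\hat\Dscr_\Sscr^{\rm in}$ existed, then $\Sscr\cup\{B\}$ would extend to a maximal w-collection in $\Din$, hence to a c-tiling of $\Rin$ containing $B$, contradicting your claimed correspondence. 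What the splicing argument actually delivers without further work is the sandwich
\[
r^s(\hat\Dscr_\Sscr^{\rm in}) \;\le\; \bigl(\text{max size of a w-collection in }\hat\Dscr_\Sscr^{\rm in}\bigr) \;\le\; r^w(\Din)\;=\;r^s(\hat\Dscr_\Sscr^{\rm in}),
\]
so the largest w-collection in $\hat\Dscr_\Sscr^{\rm in}$ has size $r^s(\hat\Dscr_\Sscr^{\rm in})$; the paper's ``obvious'' is presumably this. Whether \emph{every} maximal w-collection in $\hat\Dscr_\Sscr^{\rm in}$ attains this size (i.e., w-purity of $\hat\Dscr_\Sscr^{\rm in}$ itself) is not addressed by the paper either, so your instinct that something is being swept under the rug is sound---but your proposed fix would need to be weakened or argued differently.
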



\section{Tilings}  \label{sec:tilings}

We know that maximal s-collections (w-collections) are representable via
rhombus (resp. generalized) tilings on the zonogon $Z=Z_n$. In this section, we
start with a short review on pure (rhombus) tilings and then introduce the
notion of \emph{combined tilings} on $Z$ and show that the latter objects
behave similarly to g-tilings: They represent maximal w-collections. At the
same time, this new combinatorial model is viewed significantly simpler than
the one of g-tilings, and we will take advantage from this fact to obtain a
rather transparent proof of Theorem~\ref{tm:cyc_pattern}.


\subsection{Rhombus tilings}  \label{ssec:rhomb_til}

By a \emph{planar tiling} on the zonogon $Z$, we mean a collection $T$ of
convex polygons, called \emph{tiles}, such that: (i) the union of tiles is $Z$,
(ii) any two tiles either are disjoint or intersect by a common vertex or by a
common side, and (iii) each boundary edge of $Z$ belongs to exactly one tile.
Then the set of vertices and the set of edges (sides) of tiles in $T$, ignoring
multiplicities, form a planar graph, denoted by $G_T=(V_T,E_T)$. Usually the
edges of $G_T$ are equipped with directions. Speaking of vertices or edges of
$T$, we mean those in $G_T$.

A \emph{pure (rhombus) tiling} is a planar tiling $T$ in which all tiles are
rhombi; then each edge of $T$ is congruent to some generator $\xi_i$, and we
direct it accordingly (upward).  More precisely, each tile $\tau$ is of the
form $X+\{\lambda\xi_i+\lambda'\xi_j\colon 0\le \lambda,\lambda'\le 1\}$ for
some $i<j$ and some subset $X\subseteq[n]-\{i,j\}$ (regarded as a point in
$Z$). We call $\tau$ a tile of \emph{type} $ij$ and denote it by $\tau(X;i,j)$.
By a natural visualization of $\tau$, its vertices $X,Xi,Xj,Xij$ are called the
{\em bottom, left, right, top} vertices of $\tau$ and denoted by $b(\tau)$,
$\ell(\tau)$, $r(\tau)$, $t(\tau)$, respectively.

The vertex set $V_T$ of $G_T$ is also called the \emph{spectrum} of $T$.

Pure tilings admit a sort of \emph{mutations}, called strong raising and
lowering flips, which transform one tiling $T$ into another, and back. More
precisely, suppose that $T$ contains a hexagon $H$ formed by three tiles
$\alpha:=\tau(X;i,j)$, $\beta:=\tau(X;j,k)$, and $\gamma:=\tau(Xj;i,k)$. The
\emph{strong raising flip} replaces $\alpha,\beta,\gamma$ by the other possible
combination of three tiles in $H$, namely by $\alpha':=\tau(Xk;i,j)$,
$\beta':=\tau(Xi;j,k)$, and $\gamma':=\tau(X;i,k)$. The \emph{strong lowering
flip} acts conversely: it replaces $\alpha',\beta',\gamma'$ by
$\alpha,\beta,\gamma$. See the picture.

 \begin{center}
  \unitlength=1mm
  \begin{picture}(130,33)
   \put(10,0){\begin{picture}(50,30)
  \put(10,0){\circle*{1}}
  \put(0,10){\circle*{1}}
  \put(30,10){\circle*{1}}
  \put(0,20){\circle*{1}}
  \put(10,10){\circle*{1}}
  \put(30,20){\circle*{1}}
  \put(20,30){\circle*{1}}
  \put(10,0){\vector(-1,1){9.7}}
  \put(10,10){\vector(-1,1){9.7}}
  \put(30,20){\vector(-1,1){9.7}}
  \put(10,0){\vector(2,1){19.7}}
  \put(10,10){\vector(2,1){19.7}}
  \put(0,20){\vector(2,1){19.7}}
  \put(0,10){\vector(0,1){9.5}}
  \put(10,0){\vector(0,1){9.5}}
  \put(30,10){\vector(0,1){9.5}}
  \put(11,7){$Xj$}
  \put(5,-2){$X$}
  \put(-6,7){$Xi$}
  \put(31,8){$Xk$}
  \put(-6,22){$Xij$}
  \put(31,21){$Xjk$}
  \put(22,29){$Xijk$}
  \put(4,9){$\alpha$}
  \put(20,9){$\beta$}
  \put(15,19){$\gamma$}
  \put(45,17){\vector(1,0){25}}
  \put(70,14){\vector(-1,0){25}}
  \put(47,19){\rm{raising flip}}
  \put(47,10){\rm{lowering flip}}
     \put(80,0){\begin{picture}(50,30)
  \put(10,0){\circle*{1}}
  \put(0,10){\circle*{1}}
  \put(30,10){\circle*{1}}
  \put(0,20){\circle*{1}}
  \put(20,20){\circle*{1}}
  \put(30,20){\circle*{1}}
  \put(20,30){\circle*{1}}
  \put(10,0){\vector(-1,1){9.7}}
  \put(30,10){\vector(-1,1){9.7}}
  \put(30,20){\vector(-1,1){9.7}}
  \put(10,0){\vector(2,1){19.7}}
  \put(0,10){\vector(2,1){19.7}}
  \put(0,20){\vector(2,1){19.7}}
  \put(0,10){\vector(0,1){9.5}}
  \put(20,20){\vector(0,1){9.5}}
  \put(30,10){\vector(0,1){9.5}}
  \put(13,7){$\gamma'$}
  \put(9,19){$\beta'$}
  \put(23,19){$\alpha'$}
  \put(15,14){$Xik$}
    \end{picture}}

    \end{picture}}
  \end{picture}
   \end{center}

\noindent In terms of sets, a strong flip is applicable to a set-system when
for some $X$ and $i<j<k$, it contains six sets $X,Xi,Xk,Xij,Xjk,Xijk$ and one
more set $Y\in\{Xj,Xik\}$. The flip (``in the presence of six witnesses'', in
terminology of~\cite{LZ}) replaces $Y$ by the other member of $\{Xj,Xik\}$; the
replacement $Xj\rightsquigarrow Xik$ gives a raising flip, and
$Xik\rightsquigarrow Xj$ a lowering flip. Leclerk and Zelevinsky described
in~\cite{LZ} important properties of strongly separated set-systems. Among
those we use the following.

(i) If $\Fscr\subseteq 2^{[n]}$ is a maximal s-collection and if $\Fscr'$ is
obtained by a strong flip from $\Fscr$, then $\Fscr'$ is a maximal s-collection
as well.

(ii) Let $\bf{S}_n$ be the set of maximal s-collections in $2^{[n]}$, and for
$\Fscr,\Fscr'\in\bf{S}_n$, let us write $\Fscr\prec_s \Fscr'$ if $\Fscr'$ is
obtained from $\Fscr$ by a series of strong raising flips. Then the poset
$(\bf{S}_n,\prec_s)$ has a unique minimal element and a unique maximal element,
which are the collection $\Iscr_n$ of all intervals and the collection
co-$\Iscr_n$ of all co-intervals in $[n]$, respectively.

Moreover, they established a correspondence between maximal s-collections and
so-called commutation classes of pseudo-line arrangements. In terms of pure
tilings (which are dual to pseudo-line arrangements, in a sense), this is
viewed as follows.
  \begin{theorem} {\rm \cite{LZ}} \label{tm:LZ_strong}
For each pure (rhombus) tiling $T$, its spectrum $V_T$ is a maximal
s-collection (regarding a vertex as a subset of $[n]$). This correspondence
gives a bijection between the set of pure (rhombus) tilings on $Z_n$ and
$\bf{S}_n$.
  \end{theorem}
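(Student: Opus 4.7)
The plan is to prove the theorem in three stages: verify that $V_T$ is a strongly separated collection for every pure tiling $T$; compute $|V_T|$ and match it to the s-rank $\binom{n+1}{2}+1$ of $2^{[n]}$ established by Leclerc--Zelevinsky, forcing maximality; and invert the map $T\mapsto V_T$ by recognizing strong flips on s-collections as hexagonal flips on tilings.

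For the first two stages I would introduce the $n$ de Bruijn strands of $T$: strand $i$ is the piecewise-linear curve that enters $Z$ through the midpoint of the $i$-th edge of $lbd(Z)$, traverses each tile of type $(i,j)$ transversally through its pair of parallel $\xi_i$-sides, and exits through the corresponding $\xi_i$-edge of $rbd(Z)$. Any two strands cross in exactly one tile (the common rhombus of their types), making the family a pseudoline arrangement inside $Z$. For a vertex $v\in V_T$, I would check by induction along a monotone upward edge-path from the bottom vertex of $Z$ that the subset $X_v\subseteq[n]$ of strands lying above $v$ coincides with the subset $v$ is identified with. For two vertices $A,B\in V_T$, the set $A-B$ then indexes exactly the strands separating them with $A$ above, and $B-A$ dually. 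Because the strands leave $lbd(Z)$ in the order $1<2<\cdots<n$ (bottom to top) and arrive on $rbd(Z)$ in the reverse order, and any two of them cross exactly once, the cyclic interleaving of the boundary endpoints of strands in $A-B$ versus $B-A$ forces these two index sets to lie in disjoint intervals of $[n]$, which is precisely $A\lessdot B$ or $B\lessdot A$. For the count, each unordered pair of strands yields exactly one crossing, giving $\binom{n}{2}$ tiles; the edge-incidence identity $4\binom{n}{2}=2(|E_T|-2n)+2n$ gives $|E_T|=n^2$, and Euler's formula on $G_T$ (including the outer face) gives
\[
|V_T| \;=\; n^2 - \bigl(\tbinom{n}{2}+1\bigr) + 2 \;=\; \tbinom{n+1}{2}+1,
\]
matching the s-rank.

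Injectivity of $T\mapsto V_T$ is immediate: each rhombus $\tau(X;i,j)$ is determined by its four corners, and planarity recovers $T$ from its vertex set. For surjectivity, property (ii) cited before the theorem statement shows that every $\Fscr\in\bfS_n$ is reachable from the interval collection $\Iscr_n$ by strong raising flips, and $\Iscr_n=V_{T_0}$ for the ``staircase'' tiling $T_0$. The induction step is that a strong raising flip replacing $Xj$ by $Xik$ in the presence of the six witnesses $X,Xi,Xk,Xij,Xjk,Xijk$ corresponds bijectively to a hexagon flip applied to the three tiles $\tau(X;i,j),\tau(X;j,k),\tau(Xj;i,k)$ of $T$: the six witnesses are exactly the boundary vertices of the hexagon, so the flipped s-collection $\Fscr'$ equals $V_{T'}$. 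The main obstacle I foresee is the rigorous planar-geometric argument in the separation step, i.e.\ translating the circular interleaving of strand endpoints into the combinatorial inequality defining $\lessdot$. The cleanest route uses a homeomorphism of $Z$ straightening each strand to a chord, after which the interval disjointness of $A-B$ and $B-A$ follows directly from the boundary order; the remaining ingredients then reduce to routine planar combinatorics and the flip-connectivity of $\bfS_n$ established by Leclerc--Zelevinsky.
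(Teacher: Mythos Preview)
The paper does not give its own proof of this theorem: it is quoted from~\cite{LZ}, and the surrounding text explicitly says that Leclerc--Zelevinsky established the correspondence in the dual language of commutation classes of pseudoline arrangements. Your de~Bruijn strands are precisely that dual picture---the strands form a pseudoline arrangement whose chambers are the vertices of $T$---so your plan is not an alternative route but a direct translation of the original argument to the tiling side.

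As a sketch it is sound. The Euler count is clean and correct. The separation step is the right idea, and the cleanest way to close the gap you flag is to restrict to any three strands $i<j<k$: since each pair crosses exactly once, at most one of the two ``alternating'' regions (above $i,k$ and below $j$, versus below $i,k$ and above $j$) can exist, which immediately rules out an interleaving of $A-B$ and $B-A$.

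Where your outline is thinner than you realise is on the bijection side. Both injectivity and the surjectivity-via-flips step silently use the lemma ``if $X$ and $Xi$ are both in $V_T$ then $(X,Xi)\in E_T$'' (the rhombus-tiling analogue of Proposition~\ref{pr:X-Xi} here), which you never state. And even granting all nine edges among $X,Xi,Xj,Xk,Xij,Xjk,Xijk$, you still need that $Xj$ has \emph{no further} neighbours in $G_T$, so that the three rhombi actually form a hexagon. This holds because any extra neighbour $Xj\ell$ or $(X-m)j$ would fail to be strongly separated from $Xik$, contradicting that the flipped collection lies in $\bfS_n$; but it deserves a sentence.
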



\subsection{Combined tilings}  \label{ssec:combi}

In addition to the generators $\xi_i$ (satisfying~\refeq{xi_i}), we will deal
with vectors $\eps_{ij}:= \xi_j-\xi_i$, where $1\le i<j\le n$. A \emph{combined
tiling}, abbreviated as a \emph{c-tiling}, or simply a \emph{combi}, is a sort
of planar tilings (defined later) generalizing rhombus tilings in essence in
which each edge $e$ is congruent to either $\xi_i$ for some $i$ or $\eps_{ij}$
for some $i<j$; we say that $e$ has \emph{type} $i$ in the former case and
\emph{type} $ij$ in the latter case.
\medskip

\noindent\textbf{Remark 1.} In fact, we are free to choose arbitrary basic
vectors $\xi_i$ (subject to~\refeq{xi_i}) without affecting our model (in the
sense that corresponding structures and set-systems remain equivalent when the
generators vary). Sometimes, to simplify visualization, it is convenient to
think of vectors $\xi_i=(x_i,y_i)$  as ``almost vertical'' ones (i.e., to
assume that $|x_i|=o(|y_i|)$). Then the vectors $\eps_{ij}$ become ``almost
horizontal''. For this reason, we will liberally refer to edges congruent to
$\xi_\bullet$ as \emph{V-edges}, and those congruent to $\eps_\bullet$ as
\emph{H-edges}. Also we say that V-edges point \emph{upward}, and H-edges point
\emph{to the right}.
\medskip

The simplest case of combies arises from arbitrary rhombus tilings $T$ by
subdividing each rhombus $\tau$ of $T$ into two isosceles triangles $\Delta$
and $\nabla$, where the former (the ``upper'' triangle) uses the vertices
$\ell(\tau),t(\tau),r(\tau)$ and the latter (the ``lower'' triangle) uses the
vertices $\ell(\tau),b(\tau),r(\tau)$. Then the resulting combi has as V-edges
all edges of $T$ and has as H-edges the diagonals $(\ell(\tau),r(\tau))$ of
rhombi $\tau$ of $T$. We refer to such a combi as a \emph{semi-rhombus tiling}.

\vspace{0cm}
\begin{center}
\includegraphics{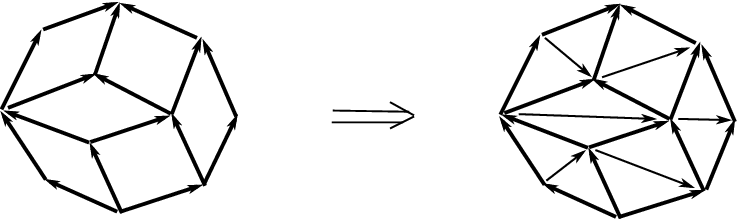}
\end{center}
\vspace{0cm}

The above picture illustrates the transformation of a rhombus tiling into a
combi; here V-edges (H-edges) are drawn by thick (resp. thin) lines.

In a general case, a combi is a planar tiling $K$ formed by tiles of three
sorts: $\Delta$-tiles, $\nabla$-tiles, and lenses. As before, the vertices of
$K$ (or $G_K$) represent subsets of $[n]$.

A $\Delta$-\emph{tile} is an isosceles triangle $\Delta$ with vertices $A,B,C$
and edges $(B,A),(C,A),(B,C)$, where $(B,C)$ is an H-edge, while $(B,A)$ and
$(C,A)$ are V-edges (so $(B,C)$ is the \emph{base} side and $A$ is the
\emph{top} vertex of $\Delta$). We denote $\Delta$ as $\Delta(A|BC)$.
\smallskip

A $\nabla$-\emph{tile} is symmetric. It is an isosceles triangle $\nabla$ with
vertices $A',B',C'$ and edges $(A',B'),(A',C'), (B',C')$, where $(B',C')$ is an
H-edge, while $(A',B')$ and $(A',C')$ are V-edges (so $(B',C')$ is the base
side and $A'$ is the \emph{bottom} vertex of $\nabla$). We denote $\nabla$ as
$\nabla(A'|B'C')$. The picture illustrates $\Delta$- and $\nabla$-tiles.

\vspace{-0.1cm}
\begin{center}
\includegraphics{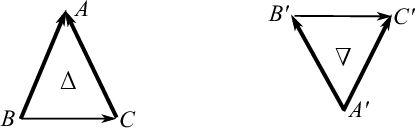}
\end{center}
\vspace{-0.1cm}

We say that a $\Delta$- or $\nabla$-tile $\tau$ has type $ij$ if its base edge
has this type (and therefore the V-edges of $\tau$ have types $i$ and $j$).
\smallskip

In a \emph{lens} $\lambda$, the boundary is formed by two directed paths
$U_\lambda$ and $L_\lambda$ with at least two edges in each. The paths
$U_\lambda$ and $L_\lambda$ have the same beginning vertex $\ell_\lambda$ and
the same end vertex $r_\lambda$ (called the \emph{left} and \emph{right}
vertices of $\lambda$, respectively), contain merely H-edges, and form the
\emph{upper} and \emph{lower} boundaries of $\lambda$, respectively. More
precisely,
  \begin{numitem1} \label{eq:U_lambda}
the path $U_\lambda=(\ell_\lambda=X_0,e_1,X_1,\ldots,e_q,X_q=r_\lambda)$ (where
$q\ge 2$) is associated with a sequence $i_0<i_1<\cdots<i_q$ in $[n]$ so that
for $p=1,\ldots,q$, the edge $e_p$ (going from the vertex $X_{p-1}$ to the
vertex $X_i$) is of type $i_{p-1}i_p$.
  \end{numitem1}
This implies that the vertices of $U_\lambda$ are expressed as $X_p=Xi_p$ for
one and the same set $X\subset[n]$ (equal to $X_p\cap X_{p'}$ for any $p\ne
p'$). In other words, the point $X_p$ in $Z$ is obtained from $X$ by adding the
vector $\xi_{i_p}$, and all vertices of $U_\lambda$ lie on the upper half of
the circumference of radius 1 centered at $X$. We call $X$ the \emph{center} of
$U_\lambda$ and denote by $\Oup_\lambda$. In its turn,
  \begin{numitem1} \label{eq:L_lambda}
the path
$L_\lambda=(\ell_\lambda=X'_0,e'_1,X'_1,\ldots,e'_{q'},X'_{q'}=r_\lambda)$
(with $q'\ge 2$) is associated with a sequence $j_0>j_1>\cdots>j_{q'}$ in $[n]$
so that for $p=1,\ldots,q'$, the edge $e'_p$ is of type $j_pj_{p-1}$.
  \end{numitem1}
Then the vertices of $L_\lambda$ are expressed as $X'_p=Y-j_p$ for the same set
$Y\subseteq[n]$ (equal to $X'_p\cup X'_{p'}$ for any $p\ne p'$). So the
vertices of $L_\lambda$ lie on the lower half of the circumference of radius 1
centered at $Y$, called the \emph{center} of $L_\lambda$ and denoted by
$\Olow_\lambda$. See the picture (where $q=2$ and $q'=3$):

\vspace{-0.3cm}
\begin{center}
\includegraphics{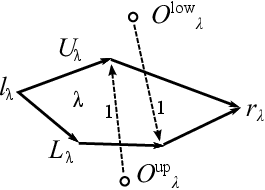}
\end{center}
\vspace{-0cm}

Note that $\ell_\lambda=Xi_0=Y-j_0$ and $r_\lambda=Xi_q=Y-j_{q'}$ imply
$i_0=j_{q'}$ and $i_q=j_0$. We say that the lens $\lambda$ \emph{has type}
$i_0i_q$. The intersection of the circles of radius 1 centered at
$\Oup_\lambda$ and $\Olow_\lambda$ is called the \emph{rounding} of $\lambda$
and denoted by $\Omega_\lambda$. Observe that all vertices of $\lambda$ lie on
the boundary of $\Omega_\lambda$. It is not difficult to realize that none of
the vertices of the combi $K$ lies in the interior (i.e., strictly inside) of
$\Omega_\lambda$.

Clearly all vertices of a lens $\lambda$ have the same size (regarding a vertex
as a subset of $[n]$); we call it the \emph{level} of $\lambda$. For an H-edge
$e$, consider the pair of tiles in $K$ containing $e$. Then either both of them
are lenses, or one is a lens and the other is a triangle, or both are triangles
($\Delta$- and $\nabla$-tiles); in the last case, we regard $e$ as a
\emph{degenerate lens}.

The union of lenses of level $h$ (including degenerate lenses) forms a closed
simply connected region meeting $lbd(Z)$ at the vertex $z^\ell_h=[h]$, and
$rbd(Z)$ at the vertex $z^r_h=[(n-h+1)..n]$; we call it $h$th \emph{girdle} and
denote by $\Lambda_h$. A vertex $v$ in $\Lambda_h$ having both entering and
leaving V-edges is called \emph{critical}; it splits the girdle into two (left
and right) closed sets, and the part of $\Lambda_h$ between two consecutive
critical vertices is either a single H-edge or a disk (being the union of some
non-degenerate lenses). The region between the upper boundary of $\Lambda_h$
and the lower boundary of $\Lambda_{h+1}$ is filled up by triangles; we say
that these triangles have level $h+\frac12$.

Figure~\ref{fig:combi} illustrates a combi for $n=4$ having one lens $\lambda$;
here the bold (thin) arrows indicate V-edges (resp.H-edges); note that the
girdle $\Lambda_2$ is the union of the lens $\lambda$ and the edge $(24,34)$.

\begin{figure}[htb]
\vspace{0cm}
\begin{center}
\includegraphics{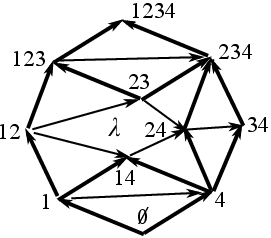}
\end{center}
\vspace{-0.5cm}
 \caption{The combi $K$ with $V_K=\{\emptyset,1,4,12,14,23,24,34,123,234,1234\}$}
 \label{fig:combi}
  \end{figure}

\noindent\textbf{Remark 2.} Fix a girdle $\Lambda_h$ of $K$ and transform each
lens $\lambda$ in it by drawing the line segment between $\ell_\lambda$ and
$r_\lambda$, thus subdividing $\lambda$ into two ``semi-lenses'', the upper and
lower ones. Let us color each of the obtained upper semi-lenses in black and
each lower semi-lens in white. Then the resulting planar tiling (within the
region of $\Lambda_h$), in which the tiles are colored black and white and any
two tiles sharing an edge have different colors, is viewed as a \emph{plabic
tiling}. Recall that plabic tilings are studied in~\cite{OPS} where it is
shown, in particular, that the set of vertices of such a tiling represents a
weakly separated collection in the discrete Grassmannian $\Delta_n^h$ for
corresponding $n,h$. A description of plabic tilings and their properties in
more details is beyond our paper.


\subsection{Flips in c-tilings}  \label{ssec:flips}

Now our aim is to show that the spectrum (viz. vertex set) $V_K$ of a combi $K$
is a maximal w-collection and that any maximal w-collection is obtained in this
way. To show this, we elaborate a technique of flips on combies (which, due to
the planarity of a combi, looks simpler than a technique of this sort for
g-tilings in~\cite{DKK1}).

We will rely on the following two statements.
  \begin{prop} \label{pr:X-Xi}
Suppose that a combi $K$ contains two vertices of the form $X$ and $Xi$ (where
$i\in[n]-X$). Then $K$ has edge from $X$ to $Xi$.
  \end{prop}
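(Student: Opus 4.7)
The plan is to show directly that the straight line segment $L:=[X,Xi]$ is itself an edge of the combi $K$. Since $Xi-X=\xi_i$, this segment has length $\|\xi_i\|=1$ and direction $\xi_i$. The key observation is: if \emph{any} edge of $K$ emanating from $X$ has direction $\xi_i$, then by the $\mathbb{Z}$-independence of the generators (which ensures no H-edge direction $\eps_{jk}$ coincides with any $\xi_i$, and no two V-edges of different types are parallel), this edge must be a V-edge of type $i$; having length $1$, its other endpoint is necessarily $X+\xi_i=Xi$, proving the claim directly.

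I would therefore argue by contradiction: suppose no edge of $K$ at $X$ has direction $\xi_i$. Then $\xi_i$ lies strictly interior to the angle at $X$ of some 2D tile $\tau$ of $K$. I would classify $\tau$: since $\xi_i$ has positive vertical component and points upward from $X$, the tile $\tau$ must have $X$ on its lower side, which leaves three possibilities: (i) $\tau$ is a $\nabla$-tile with $X$ as the bottom apex and V-edges at $X$ of types $j_1<j_2$ with $j_1<i<j_2$ angularly; (ii) $\tau$ is a $\Delta$-tile with $X$ as a bottom (left or right) vertex, so that $\xi_i$ lies between the V-edge direction and the H-edge direction of $\tau$ at $X$; or (iii) $\tau$ is a lens whose lower boundary $L_\tau$ contains $X$.

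The decisive geometric input is that $Xi$ lies on the unit circle centered at $X$ at angular position $\xi_i$. In subcase (i), the base H-edge $[Xj_1,Xj_2]$ of $\tau$ is a chord of this unit circle strictly separating $X$ from $Xi$; hence $L$ crosses this H-edge and enters the tile $\tau'$ adjacent to $\tau$ across it. If $\tau'$ is the $\Delta$-tile $\Delta(Xj_1j_2\mid Xj_1,Xj_2)$, writing $\xi_i=a\xi_{j_1}+b\xi_{j_2}$ with $a,b>0$ and using $|\xi_i|=1$ yields $a+b>1$, placing $Xi$ in the interior of $\tau'$ whenever the angular spread is narrow enough — a direct contradiction, since $Xi$ is a vertex of $K$ and no vertex lies in the interior of a tile. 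If $\tau'$ is a lens $\lambda$, then $Xi$ lies strictly inside the rounding $\Omega_\lambda$, contradicting the combi property that no vertex of $K$ lies in the interior of any $\Omega_\lambda$. Subcases (ii) and (iii) are handled by entirely analogous tracings of $L$ across the relevant tile boundaries, using that $X$'s H-edges are chords of circles attached to neighboring lenses' centers and the same ``no vertex in $\Omega_\lambda$'' axiom.

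The principal obstacle is the uniform treatment of subcase (i) when the angular spread at $X$ is so wide that $Xi$ does not fall interior to the \emph{immediate} neighbor $\tau'$; then $L$ must be traced across several successive tiles. To control this, I would use the monotone advance of $L$ through the finite planar subdivision, together with the two forbidden-interior constraints (no vertex in any tile interior, no vertex in any lens rounding), to argue that each successive tile entered by $L$ further restricts the angular pocket containing $Xi$. After finitely many steps, either $Xi$ is trapped in a forbidden interior or $L$ must align with an outgoing V-edge at $X$ from the outset — the desired contradiction, establishing the proposition.
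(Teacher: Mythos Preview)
Your approach is genuinely different from the paper's: the paper proves this proposition by induction on $n$ using the $n$-contraction operation (reducing to a combi on $Z_{n-1}$ and checking that the edge survives the expansion), while you attempt a direct geometric ``trace the segment through tiles'' argument. The inductive route is short and avoids case analysis entirely; your route, if it could be made to work, would be self-contained and would not depend on the contraction/expansion machinery.

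However, there is a concrete error in your lens case (subcase (i), when $\tau'$ is a lens $\lambda$). You assert that $Xi$ lies strictly inside the rounding $\Omega_\lambda$. But $\Omega_\lambda$ is the intersection of the unit disks centred at $O^{\mathrm{up}}_\lambda$ and $O^{\mathrm{low}}_\lambda$, and when the edge $[Xj_1,Xj_2]$ sits on $L_\lambda$ with $\ell_\lambda=Xj_1$, $r_\lambda=Xj_2$, one has $O^{\mathrm{up}}_\lambda = Xj_1\cap Xj_2 = X$. Then $|Xi-O^{\mathrm{up}}_\lambda|=|\xi_i|=1$, so $Xi$ lies \emph{on} the boundary circle of the upper disk, not strictly inside $\Omega_\lambda$. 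In fact $Xi$ sits on the arc carrying $U_\lambda$, possibly between two consecutive vertices of $U_\lambda$ and hence outside $\lambda$ altogether; the ``no vertex in the interior of $\Omega_\lambda$'' axiom gives you nothing here. You would then need to pass to the tile above the relevant edge of $U_\lambda$ and repeat --- which is exactly the multi-step tracing you flag as the ``principal obstacle''.

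That obstacle is the second, larger gap. Your final paragraph promises a monotone tracing argument with a shrinking ``angular pocket'', but you never specify the invariant, never say why each crossing strictly shrinks it, and never handle the interaction between the triangle and lens cases along the trace. The honest version of this argument would require an induction on something like the type-spread $j_2-j_1$ of the current bracketing edge, together with a careful case split each time the segment exits a tile; as written it is a sketch of a hope rather than a proof. Subcases (ii) and (iii) are also not ``entirely analogous'': in (iii), for instance, $X$ sits on $L_\mu$ of a lens $\mu$, all of whose vertices have size $|X|$, so $Xi$ is not even a vertex of $\mu$, and the geometry of where the segment exits $\mu$ needs its own analysis.
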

  \begin{prop} \label{pr:2edges}
Suppose that for some $i<j''\le j'<k$, a combi $K$ contains an H-edge $e=(A,B)$
of type $j'k$ and an H-edge $e'=(B,C)$ of type $ij''$. Then $j'=j''$ and the
edges $e,e'$ belong to either the lower boundary of one lens of $K$, or two
$\Delta$-tiles with the same top vertex, namely $\Delta(D|AB)$ and
$\Delta(D|BC)$, where $D=Ak=Bj'=Ci$. Symmetrically, if $e=(A,B)$ has type
$ij''$ and $e'=(B,C)$ has type $j'k$, then $j'=j''$ and $e,e'$ belong to either
the upper boundary of one lens of $K$, or two $\nabla$-tiles with the same
bottom vertex, namely $\nabla(D'|AB)$ and $\nabla(D'|BC)$, where
$D'=A-i=B-j'=C-k$.
  \end{prop}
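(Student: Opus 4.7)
The plan is to analyze the local tile configuration at the common vertex $B$. Let $\tau_1$ denote the tile of $K$ containing $B$ with $e$ on its lower boundary, and $\tau_2$ the tile containing $B$ with $e'$ on its lower boundary. By the classification of tiles in a combi, each of $\tau_1, \tau_2$ must be either a $\Delta$-tile (with the given H-edge as its base) or a lens (with the H-edge on its $L$-path); $\nabla$-tiles are excluded as they lie below their base. In the $\Delta$-tile case the V-edge types forced by the H-edge's type yield $\tau_1 = \Delta(Bj'|AB)$ with top $Ak = Bj'$, and $\tau_2 = \Delta(Bj''|BC)$ with top $Ci = Bj''$.

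If $\tau_1 = \tau_2$, this common tile is a lens $\lambda$ (no triangle has two H-edges), and $e, e'$ are consecutive edges of $L_\lambda$. The sequence description~\refeq{L_lambda} identifies the shared middle index as some $j_p$, giving $j' = j'' = j_p$ and placing $e, e'$ on the lower boundary of one lens, as desired. Suppose now $\tau_1 \ne \tau_2$. I first rule out that either is a lens. If $\tau_1 = \lambda_1$ is a lens and $B$ is an internal vertex of $L_{\lambda_1}$, then by~\refeq{L_lambda} the next $L_{\lambda_1}$-edge $e''$ at $B$ has type $(j_{p+1}, j')$; if $e' \ne e''$, the angular gap between them at $B$ must be filled by tiles, but any triangle carries at most one H-edge at $B$, and any single lens $\lambda_*$ filling the gap with $B = \ell_{\lambda_*}$ would force its increasing $U$-sequence to start at $j_{p+1}$, pass through $j'$, and reach $j''$, contradicting $j'' \le j'$. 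So $e' = e''$, which gives $\tau_1 = \tau_2$, a contradiction. The subcase $B = r_{\lambda_1}$ is eliminated analogously, invoking the last $U_{\lambda_1}$-edge at $B$, the matching $i_q = j_0$, and the type inequality $i < j'' \le j' < k$.

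Thus both $\tau_1, \tau_2$ are $\Delta$-tiles. Their angular sectors at $B$ are bounded by the incident H-edge and the upward V-edge $(B, Bj')$ or $(B, Bj'')$ respectively. Since the generators $\xi_s$ are clockwise-ordered (larger $s$ corresponding to smaller angle), the strict inequality $j' > j''$ would place the V-edge $(B, Bj')$ strictly inside $\tau_2$'s angular sector, violating planarity of the tiling. Hence $j' = j''$, the two $\Delta$-tiles share the V-edge $(B, D)$ with $D = Bj' = Bj'' = Ak = Ci$, and the second alternative of the conclusion holds. The symmetric statement (types $ij''$ and $j'k$ in this order) is proved by a mirror argument using $\nabla$-tiles below the H-edges and the $U$-boundary description~\refeq{U_lambda}. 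The main obstacle is the mixed-case analysis in $\tau_1 \ne \tau_2$ where one candidate tile would be a lens with an endpoint at $B$: the $L$/$U$-sequence structure must be combined carefully with the type inequality $i < j'' \le j' < k$ to rule out the pathological filling configurations.
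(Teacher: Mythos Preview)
Your overall strategy---examining the tiles $\tau_1,\tau_2$ lying immediately above $e$ and $e'$---is reasonable, and the case where both are $\Delta$-tiles is handled correctly (the angular/planarity argument does force $j'=j''$). However, the argument ruling out lenses when $\tau_1\ne\tau_2$ has genuine gaps. When $\tau_1=\lambda_1$ is a lens with $B$ internal in $L_{\lambda_1}$, you assert that the gap between $e''$ and $e'$ is filled by ``a single lens $\lambda_*$'' whose $U$-sequence runs from $j_{p+1}$ through $j'$ to $j''$; but nothing prevents \emph{several} lenses from being stacked between $e''$ and $e'$, each with $B$ as its left vertex, and tracing the type indices through such a stack requires an inductive argument you do not give. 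You also do not justify why $e'$ must lie \emph{below} $e''$ at $B$. Finally, the subcase $B=r_{\lambda_1}$ is not ``analogous'': there the last $U_{\lambda_1}$-edge \emph{enters} $B$ rather than leaving it, the local picture is quite different, and in fact $B$ may well possess outgoing V-edges in that subcase, so your lens/triangle split does not line up with the dichotomy in the conclusion.

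The paper sidesteps all of this with a cleaner split: either $B$ has an outgoing V-edge or it does not. If it does, take the \emph{leftmost} and \emph{rightmost} such V-edges and the adjacent $\Delta$-tiles $\Delta(D|A'B)$, $\Delta(D'|BC')$; comparing their bases to $e,e'$ via the ``lies above'' relation squeezes $j'\le\tilde j'\le\tilde j''\le j''\le j'$, forcing equality throughout. If it does not, then the tile occupying the straight-up direction at $B$ must be a lens $\lambda$ with $B$ an \emph{intermediate} vertex of $L_\lambda$ (the endpoints $\ell_\lambda,r_\lambda$ are excluded because the lens sector there lies entirely to one side of vertical); the two $L_\lambda$-edges at $B$ then dominate $e,e'$ from above, again forcing equality. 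This dichotomy maps directly onto the two alternatives in the statement and avoids the stacked-lens analysis altogether.
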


These propositions are proved in Sect.~\SEC{proofs}. Assuming their validity,
we show the following.
  \begin{theorem} \label{tm:combi-max-ws}
For a combi $K$, the spectrum $V_K$ is a maximal w-collection.
  \end{theorem}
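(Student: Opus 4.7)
The plan is to prove the theorem in two parts: (a) the spectrum $V_K$ is weakly separated, and (b) $V_K$ is maximal. Both parts rely on Propositions~\ref{pr:X-Xi} and~\ref{pr:2edges} and, for (b), on the flip machinery developed in this subsection.

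For part (a), take distinct $A, B \in V_K$. \emph{If $|A| = |B|$}, both lie in the same girdle $\Lambda_h$. When they belong to the same lens $\lambda$ of type $(i_0, i_q)$, I parametrize its vertices as $X + i_p$ on the upper path and $Y - j_p$ on the lower path with $Y = X + i_0 + i_q$. A direct symmetric-difference check then shows that two vertices on the same path are strongly separated via $\lessdot$, while an upper vertex $X + i_p$ and a lower vertex $Y - j_{p'}$ satisfy the split relation $\rhd$: the outer indices $i_0, i_q$ of the lens straddle the middle difference. If $A, B$ lie in different lenses of $\Lambda_h$, I would reduce to the single-lens case by bridging through the critical vertices separating them in the girdle. \emph{If $|A| \ne |B|$}, say $|A| < |B|$, I would apply Proposition~\ref{pr:X-Xi} iteratively to construct a chain of V-edges $A = A_0, A_1, \ldots, A_{|B|-|A|}$ inside $K$, each step adding one element. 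Combined with the same-level case applied at level $|B|$, this gives $A \prec A_{|B|-|A|}$, hence $A \lessdot A_{|B|-|A|}$ by \refeq{prec-lessdot}, from which weak separation $A \weak B$ follows.

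For part (b), I would show that the raising/lowering flips defined in this subsection preserve both ``being a combi'' and ``$V_K$ is a maximal w-collection'': a flip corresponds to a standard mutation that replaces one vertex by a weakly-separated alternative in the spectrum. Then, by a flip-connectivity argument, every combi can be reduced to a semi-rhombus tiling, and it suffices to verify the base case there. The spectrum of a semi-rhombus tiling equals that of the underlying rhombus tiling, which by Theorem~\ref{tm:LZ_strong} is a maximal s-collection; since $|V_K| = \binom{n+1}{2}+1$ matches the w-rank $r^w(2^{[n]})$ of Theorem~\ref{tm:LZconj}, the spectrum is also a maximal w-collection.

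The main obstacle I expect is flip-connectivity of combies: proving that any combi is reducible to a semi-rhombus tiling by a finite sequence of lowering flips requires a termination measure (such as a lens-area or a level-based statistic) that strictly decreases under each lowering flip, together with showing no combi is ``blocked'' (admits no lowering flip unless it is already semi-rhombic). A secondary difficulty is the cross-level case of weak separation in part (a), which must be handled by propagating local information from Propositions~\ref{pr:X-Xi} and~\ref{pr:2edges} along chains of tiles without circularly invoking maximality.
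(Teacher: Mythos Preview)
Your part~(b) is essentially the paper's route, and you have correctly anticipated its two ingredients. The termination measure the paper uses is $\eta(K):=\sum_{X\in V_K}|X|$, which drops by exactly~$1$ under each lowering flip; and ``no combi is blocked'' is shown by observing that any lens $\lambda$ (chosen with $L_\lambda$ on the lower boundary of its girdle) has two consecutive edges of types $jk,\,ij$ in $L_\lambda$, whose adjacent $\nabla$-tiles form a W-configuration. Hence a combi with no lowering flip is semi-rhombus, and one then invokes the rhombus-tiling fact that the unique flip-minimal tiling has spectrum $\Iscr_n$.

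Where your plan diverges from the paper is part~(a). The paper does \emph{not} prove weak separation of $V_K$ directly; it gets it for free from part~(b). Once lowering flips carry $K$ to $K_0$ with $V_{K_0}=\Iscr_n$, one reverses direction: $K$ is obtained from $K_0$ by raising flips, each of which changes the spectrum exactly as a weak flip on set-systems (result~(A) of Leclerc--Zelevinsky). Since $\Iscr_n$ is a maximal w-collection and weak flips preserve that property, $V_K$ is a maximal w-collection --- in particular weakly separated. No separate argument for~(a) is needed, and invoking the w-purity of $2^{[n]}$ (Theorem~\ref{tm:LZconj}) is also unnecessary; the lighter input~(A) suffices.

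Your direct attack on~(a) has a genuine gap in the cross-level case. From $A\subset C:=A_{|B|-|A|}$ you do get $A\lessdot C$ trivially, but $A\prec C$ can fail (e.g.\ $A=\{3\}\subset C=\{1,3\}$), and more importantly the inference ``$A\subset C$, $|C|=|B|$, $C\weak B$ $\Rightarrow$ $A\weak B$'' is false in general: take $A=\{2,4\}$, $C=\{2,3,4\}$, $B=\{1,3,5\}$; then $C\rhd B$ with $|C|=|B|$, yet $A$ and $B$ are not weakly separated. So even granting existence of an upward V-chain from $A$ (which itself is not immediate --- an interior lens vertex need have no outgoing V-edge), the conclusion does not follow. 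Drop part~(a) and let the flip argument do all the work.
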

  \begin{proof}
First of all, we introduce W- and M-configurations in $K$ (their counterparts
for g-tilings are described in~\cite[Sec.~4]{DKK1}).

A \emph{W-configuration} is formed by two $\nabla$-tiles
$\nabla'=\nabla(Y'|X'X)$ and $\nabla''=\nabla(Y''|XX'')$ (resembling letter W),
as indicated in the left fragment of the picture.

\vspace{-0.3cm}
\begin{center}
\includegraphics{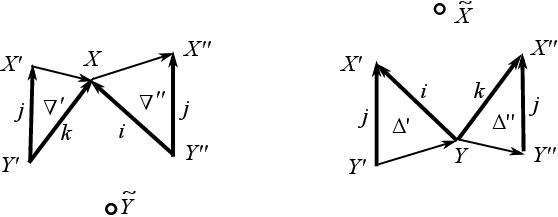}
\end{center}
\vspace{-0.3cm}

\noindent Here for some $i<j<k$, the left edge $(Y',X')$ of $\nabla'$ and the
right edge $(Y'',X'')$ of $\nabla''$ have type $j$, the right edge $(Y',X)$ of
$\nabla'$ has type $k$, and the left edge $(Y'',X)$ of $\nabla''$ has type $i$.
In other words, letting $\tilde Y:=Y'\cap Y''$ (which need not be a vertex of
$K$), we have
  \begin{equation} \label{eq:Wconf}
  Y'=\tilde Yi,\quad Y''=\tilde Yk,\quad X'=\tilde Yij,\quad X=\tilde
  Yik,\quad  X''=\tilde Yjk,
  \end{equation}
and denote such a W-configuration as $W(\tilde Y; i,j,k)$.

In its turn, an M-configuration is formed by two $\Delta$-tiles
$\Delta'=\Delta(X'|Y'Y)$ and $\Delta''=\Delta(X''|YY'')$ in $K$ (resembling
letter M), as indicated in the right fragment of the above picture. Here for
$i<j<k$, the V-edges $(Y',X'),(Y,X'),(Y,X''),(Y'',X'')$ have types $j,i,k,j$,
respectively. For $\tilde Y:=Y'\cap Y''$ (as before), the vertices
$Y',X',Y'',X''$ are expressed as in~\refeq{Wconf}, and
  \begin{equation} \label{eq:tildeYj}
  Y=\tilde Yj.
  \end{equation}
We denote such an M-configuration as $M(\tilde Y; i,j,k)$.

When $K$ has a W-configuration (M-configuration), we can make a \emph{weak
lowering} (resp. \emph{raising}) \emph{flip} to transform $K$ into another
combi $K'$. (This is somewhat similar to ``strong'' flips in pure tilings but
now the flip is performed ``in the presence of four (rather than six)
witnesses'', in terminology of Leclerc and Zelevinsky~\cite{LZ}, namely the
vertices $X',X'',Y',Y''$ as above.) Roughly speaking, the weak lowering flip
applied to $W=W(\tilde Y;i,j,k)$ replaces the ``middle'' vertex $X=\tilde Yik$
by the new vertex $Y$ as in~\refeq{tildeYj}, updating the tile structure in a
neighborhood of $X$. In particular, $W$ is replaced by the M-configuration
$M(\tilde Y;i,j,k)$. Weak raising flips act conversely.

In what follows, speaking of a flip, we default mean a weak flip.

Next we describe the lowering flip for $W$ in detail (using notation as above).
Define $\tilde X:=X'\cup X''$ ($=\tilde Yijk$). Two cases are possible.\medskip

\noindent\underline{\emph{Case 1}}: $\tilde X$ is a vertex of $K$. Note that
$\tilde X=X'k=Xj=X''i$. Therefore, by Proposition~\ref{pr:X-Xi}, $K$ has the
edges $e'=(X',\tilde X)$, $e=(X,\tilde X)$, and $e''=(X'',\tilde X)$. This
implies that $K$ contains the $\Delta$-tiles $\rho':=\Delta(\tilde X|X'X)$ and
$\rho'':=\Delta(\tilde X|XX'')$. We replace $\nabla',\nabla'',\rho',\rho''$ by
the triangles
  $$
  \Delta(X'|Y'Y),\quad \Delta(X''|YY''),\quad \Delta(\tilde X|X'X''),\quad
  \nabla(Y|X'X''),
  $$
as illustrated in Fig.~\ref{fig:Case1}.

\begin{figure}[htb]
\vspace{0cm}
\begin{center}
\includegraphics{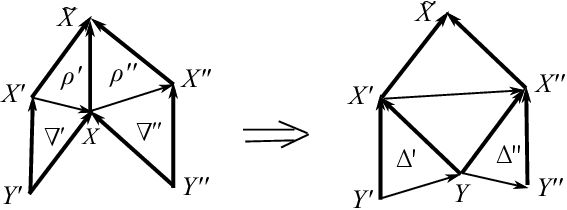}
\end{center}
\vspace{-0.5cm}
 \caption{Lowering flip in Case 1}
 \label{fig:Case1}
  \end{figure}

\noindent The tile structure of $K$ in a neighborhood of $X$ below the edges
$(Y',X)$ and $(Y'',X)$ can be of three possibilities.\medskip

\noindent\underline{\emph{Subcase 1a}}: The V-edges $(Y',X)$ and $(Y'',X)$
belong to one and the same $\Delta$-tile $\tau=\Delta(X|Y'Y'')$, and the base
$(Y',Y'')$ is shared by $\tau$ and a $\nabla$-tile $\tau'$. Since $Y'=\tilde
Yi$ and $Y''=\tilde Yk$ (cf.~\refeq{Wconf}), $\tau'$ is of the form
$\nabla(\tilde Y|Y'Y'')$. We replace $\tau,\tau'$ by the $\nabla$-tiles
  $$
  \nabla(\tilde Y|Y'Y)\quad\mbox{and}\quad \nabla(\tilde Y|YY''),
  $$
as illustrated in the left fragment of Fig.~\ref{fig:Sub1a1b}.

\begin{figure}[htb]
\vspace{0cm}
\begin{center}
\includegraphics{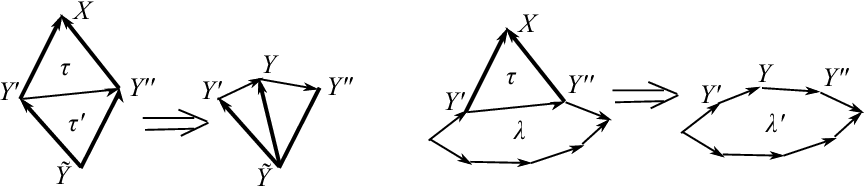}
\end{center}
\vspace{-0.5cm}
 \caption{Transformations below $X$ in Subcases~1a (left) and~1b (right)}
 \label{fig:Sub1a1b}
  \end{figure}

(Therefore, taking together, the six triangles in the hexagon with the vertices
$\tilde Y,Y',X',\tilde X,X'',Y''$ are replaced by another combination of six
triangles so as to change the inner vertex $X$ to $Y$. This matches lowering
flips in pure tilings.) \medskip

\noindent\underline{\emph{Subcase 1b}}: $(Y',X)$ and $(Y'',X)$ belong to the
same $\Delta$-tile $\tau=\Delta(X|Y'Y'')$, but the edge $e=(Y',Y'')$ is shared
by $\tau$ and a lens $\lambda$. Then $e$ belongs to the upper boundary of
$\lambda$ and the center of $U_\lambda$ is just $\tilde Y$ (since $Y'\cap
Y''=\tilde Y$). We replace the edge $e$ by the path with two edges $(Y',Y)$ and
$(Y,Y'')$, thus transforming $\lambda$ into a larger lens $\lambda'$ (which is
correct since $L_{\lambda'}=L_\lambda$, ~$|U_{\lambda'}|=|U_\lambda|+1$, and
$Y=\tilde Yj$). The transformation is illustrated in the right fragment of
Fig.~\ref{fig:Sub1a1b}.
\medskip

\noindent\underline{\emph{Subcase 1c}}: The edges $(Y',X)$ and $(Y'',X)$ belong
to different $\Delta$-tiles. Then the ``angle'' between these edges is filled
by a sequence of two or more consecutive $\Delta$-tiles
$\Delta_1=\Delta(X|Y_0Y_1)$, $\Delta_2=(X|Y_1Y_2),\ldots,
\Delta_r=(X|Y_{r-1}Y_r)$, where $r\ge 2$, $Y_0=Y'$, and $Y_r=Y''$. We replace
these triangles by one lens $\lambda$ with $U_\lambda$ formed by the path
$Y'YY''$ and with $L_\lambda$ formed by the path $Y_0Y_1\ldots Y_r$ (using path
notation via vertices). This transformation is illustrated in
Fig.~\ref{fig:Sub1c}.

\begin{figure}[htb]
\vspace{0cm}
\begin{center}
\includegraphics{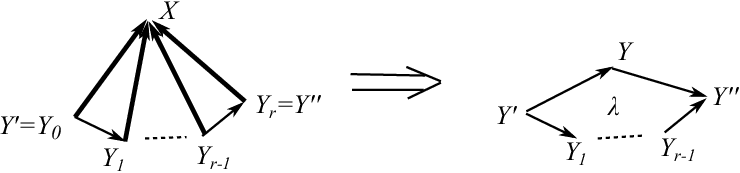}
\end{center}
\vspace{-0.5cm}
 \caption{Transformation below $X$ in Subcase~1c}
 \label{fig:Sub1c}
  \end{figure}

\noindent\underline{\emph{Case 2}}: $\tilde X$ is not a vertex of $K$. Then the
H-edges $e'=(X',X)$ and $e''=(X,X'')$ belong to the lower boundaries of some
lenses $\lambda'$ and $\lambda''$, respectively. Since $e'$ has type $jk$ and
$e''$ has type $ij$ with $i<j<k$, it follows from Proposition~\ref{pr:2edges}
that $\lambda'=\lambda''=:\lambda$. Two cases are possible. \medskip

\noindent\underline{\emph{Subcase 2a}}: $|L_{\lambda}|\ge 3$ (i.e.,
$L_{\lambda}$ contains $e',e''$ and at least one more edge). We replace
$e',e''$ by one H-edge $\tilde e=(X',X'')$, which has type $ik$. This reduces
$\lambda$ to lens $\tilde\lambda$ with $U_{\tilde\lambda}=U_{\lambda}$ and
$L_{\tilde\lambda}=(L_{\lambda}-\{e',e''\})\cup\{\tilde e\}$. (It is indeed a
lens since $|L_{\tilde\lambda}|=|L_{\lambda}|-1\ge 2$ and $X'\cup X''=\tilde
X$, thus keeping the lower center: $\Olow_{\tilde\lambda}=\Olow_{\lambda}$.)
Also, acting as in Case~1, we remove the $\nabla$-tiles $\nabla',\nabla''$ and
add the $\nabla$-tile $\nabla(Y|X'X'')$ and the two $\Delta$-tiles
$\Delta(X'|Y'Y)$ and $\Delta(X''|YY'')$. The transformation is illustrated in
the left fragment of Fig.~\ref{fig:Case2} (where $|L_{\lambda}|=4$ and
$|U_{\lambda}|=3$).

  \Xcomment{
\begin{center}
\includegraphics[scale=0.9]{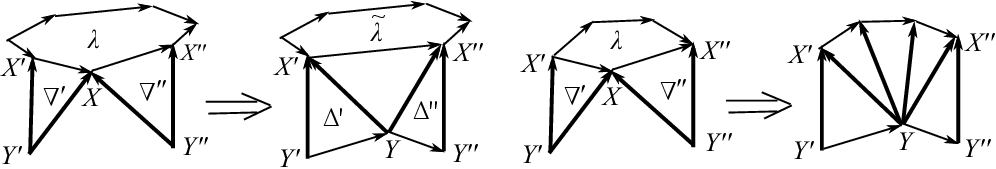}
\end{center}
\vspace{-0.3cm}
 }

\begin{figure}[htb]
\vspace{0cm}
\begin{center}
\includegraphics[scale=0.9]{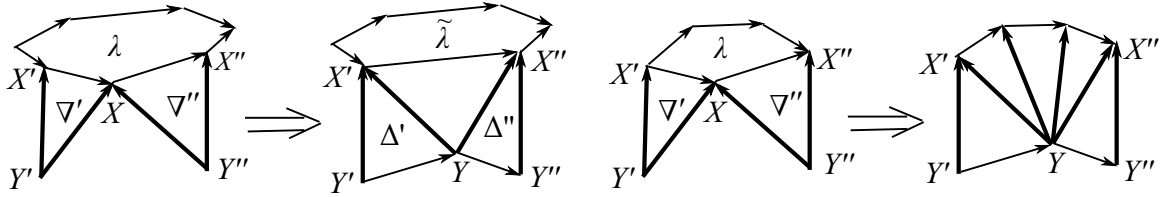}
\end{center}
\vspace{-0.5cm}
 \caption{Lowering flip in Subcases~2a (left) and~2b (right)}
 \label{fig:Case2}
  \end{figure}

\noindent\underline{\emph{Subcase 2b}}: $|L_{\lambda}|=2$. Then
$\ell_{\lambda}=X'$, $r_{\lambda}=X''$, and $L_{\lambda}$ is formed by the
edges $e',e''$. Let $X_0X_1\ldots X_q$ be the upper boundary $U_{\lambda}$,
where $X_0=X'$ and $X_q=X''$. We replace $\lambda$ by the sequence of
$\nabla$-tiles $\nabla(Y|X_0X_1), \ldots,\nabla(Y|X_{q-1}X_q)$ (taking into
account that $Y=\Oup_{\lambda}$). See the right fragment of
Fig.~\ref{fig:Case2}.
\medskip

In addition, in both Subcases 2a,2b we should perform an appropriate update of
the tile structure in a neighborhood of $X$ below the edges $(Y',X)$ and
$(Y'',X)$. This is done in the same way as described in Subcases~1a--1c.
\smallskip

A straightforward verification shows that in all cases we obtain a correct
combi $K'$. As a result of the flip, the vertex $X=\tilde Yik$ is replaced by
$Y=\tilde Yj$, and the sum of the sizes of vertices decreases by 1. The arising
$\Delta$-tiles $\Delta(X'|Y'Y)$ and $\Delta(X''|YY'')$ give an M-configuration
$M$ in $K'$, and making the corresponding raising flip with $M$, we return the
initial $K$. (The description of raising flip is ``symmetric'' to that of
lowering flip given above. In fact, a raising flip is equivalent to the
corresponding lowering flip in the combi whose vertices are the complements
$[n]-X$ of the vertices $X$ of $K$.)\smallskip

In what follows, we utilize some results from~\cite{DKK1,DKK2,LZ}.

(A) Let $\Fscr\subset 2^{[n]}$ be a maximal w-collection. Suppose that for some
triple $i<j<k$ in $[n]$ and some set $A\subseteq[n]-\{i,j,k\}$, ~$\Fscr$
contains the four sets $Ai,Ak,Aij,Ajk$ and one set $B\in\{Aj,Aik\}$ (both $Aj$
and $Aik$ cannot be simultaneously in $\Fscr$ as they are not weakly separated
from each other). Then replacing in $\Fscr$ the set $B$ by the other set from
$\{Aj,Aik\}$ (a \emph{weak flip} on set-systems) makes a maximal w-collection
as well~\cite{LZ}.

(B) Let $\Pscr=({\bf W}_n,\prec_w)$ be the poset where ${\bf W}_n$ is the set
of maximal w-collections for $[n]$ and we write $\Fscr\prec_w\Fscr'$ if
$\Fscr'$ can be obtained from $\Fscr$ by a series of (weak) raising flips. Then
$\Pscr$ has a unique minimal element and a unique maximal element, which are
the set $\Iscr_n$ of intervals and the set co-$\Iscr_n$ of co-intervals in
$[n]$, respectively~\cite{DKK1,DKK2}. \smallskip

Now we finish the proof of the theorem as follows. We associate with a combi
$K$ the parameter $\eta(K):=\sum(|X|\colon X\in V_K)$. Then a lowering
(raising) flip applied to $K$ decreases (resp. increases) $\eta$ by 1. We
assert that if $K$ admits no lowering flip, then $V_K=\Iscr_n$.

Indeed, suppose that $K$ has a (non-degenerate) lens $\lambda$. Let $\lambda$
be chosen so that $L_\lambda$ is entirely contained in the lower boundary of
the girdle $\Lambda_h$, where $h$ is the level of $\lambda$ (see
Sect.~\SSEC{combi}); one easily shows that such a $\lambda$ does exist. Take
two consecutive edges $e,e'$ in $L_\lambda$. Then $e$ has type $jk$ and $e'$
has type $ij$ for some $i<j<k$. Furthermore, $e,e'$ are the bases of some
$\nabla$-tiles $\nabla,\nabla'$, respectively (lying in level $h-\frac12$). But
then $\nabla,\nabla'$ form a W-configuration. So $K$ admits a lowering flip; a
contradiction.

Hence, $K$ is a semi-rhombus tiling. Let $T$ be its underlying rhombus tiling.
We know (see Section~\SSEC{rhomb_til}) that $T$ admits no strong lowering flip
(concerning a hexagon) if and only if $V_T=\Iscr_n$. Since $V_T=V_K$ and a
(strong) lowering flip in $T$ is translated as a (weak) lowering flip in $K$,
we obtain the desired assertion.

It follows that starting with an arbitrary combi $K$ and making a finite number
of lowering flips, we can always reach the combi $K_0$ with $V_{K_0}=\Iscr_n$
(taking into account that each application of lowering flips decreases $\eta$).
Then $K$ is obtained from $K_0$ by a series of raising flips. Each of such
flips changes the spectrum of the current combi in the same way as described
for flips concerning w-collections in~(A). Thus, $V_K$ is a maximal
w-collection.

This completes the proof of the theorem.
  \end{proof}

A converse property is valid as well.
  \begin{theorem} \label{tm:max-ws-combi}
For any maximal w-collection $\Fscr$ in $2^{[n]}$, there exists a combi $K$
such that $V_K=\Fscr$. Moreover, such a $K$ is unique.
  \end{theorem}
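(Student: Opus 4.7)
The plan is to exploit the flip correspondence from the proof of Theorem~\ref{tm:combi-max-ws} together with the flip-connectedness of $\bfW_n$ (Result~(B) there). As base combi, I would take the semi-rhombus tiling $K_0$ associated with the unique rhombus tiling whose spectrum is $\Iscr_n$ (which exists by Theorem~\ref{tm:LZ_strong}); clearly $V_{K_0}=\Iscr_n$. For existence, given any $\Fscr\in\bfW_n$, Result~(B) provides a chain $\Iscr_n=\Fscr_0\prec_w\cdots\prec_w\Fscr_s=\Fscr$ of set-system raising flips, and I would inductively build combies $K_0,K_1,\ldots,K_s$ with $V_{K_t}=\Fscr_t$. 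The step $K_t\mapsto K_{t+1}$ relies on the following \emph{Key Lemma}: whenever $V_K$ admits a set-system raising flip at $(A;i,j,k)$ (so $\{Ai,Aj,Ak,Aij,Ajk\}\subseteq V_K$ and $Aik\notin V_K$), the combi $K$ itself contains the M-configuration $M(A;i,j,k)$. Granted the lemma, performing the combi raising flip of Theorem~\ref{tm:combi-max-ws} produces $K_{t+1}$ with $V_{K_{t+1}}=\Fscr_{t+1}$.

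For uniqueness I would induct on $\eta(K):=\sum_{X\in V_K}|X|$. If $V_K=\Iscr_n$, Proposition~\ref{pr:X-Xi} forces the V-edges of $K$ to coincide with those of $K_0$, and the rigid structure of intervals forces every H-edge to appear as a degenerate lens, pinning $K$ down to $K_0$. Otherwise $\Fscr$ admits a set-system lowering flip, and by the symmetric (W-configuration) form of the Key Lemma both combies $K,K'$ with $V_K=V_{K'}=\Fscr$ contain the same W-configuration. Applying the combi lowering flip to each yields combies with a common smaller spectrum, hence equal by induction; reversing the (locally determined) raising flip then recovers $K=K'$.

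The main obstacle is the Key Lemma. Proposition~\ref{pr:X-Xi} immediately supplies the V-edges $(Ai,Aij),(Aj,Aij),(Aj,Ajk),(Ak,Ajk)$, but one still has to locate the H-edges $(Ai,Aj)$ and $(Aj,Ak)$ on the upper boundary of the girdle $\Lambda_{|A|+1}$ and confirm that the tiles immediately above them are $\Delta(Aij|Ai\,Aj)$ and $\Delta(Ajk|Aj\,Ak)$. The plan is to analyze the cyclic arrangement of edges incident to the vertex $Aj$, apply Proposition~\ref{pr:2edges} to any two H-edges meeting at a common vertex in that neighborhood, and exploit the hypothesis $Aik\notin V_K$ together with the weak-separation constraint on $V_K$ to eliminate competing local arrangements, such as a non-degenerate lens straddling the intended triangles, or a fan of intermediate $\nabla$-tiles wedged between the V-edges $(Aj,Aij)$ and $(Aj,Ajk)$. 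A careful case analysis should then force the required M-configuration and complete the proof.
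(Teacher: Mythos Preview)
Your existence argument is essentially the paper's. The Key Lemma you formulate is precisely the $M$-configuration (raising) counterpart of the paper's statement~\refeq{AAA}, and its proof is just as short as the $W$-version: Proposition~\ref{pr:X-Xi} gives the four V-edges $(Ai,Aij),(Aj,Aij),(Aj,Ajk),(Ak,Ajk)$; then take the $\Delta$-tile at $Aij$ whose right V-edge is $(Aj,Aij)$ --- it has the form $\Delta(Aij|Ai',Aj)$ with $i<i'\le j$ --- and the $\Delta$-tile at $Ajk$ whose left V-edge is $(Aj,Ajk)$, of the form $\Delta(Ajk|Aj,Ak')$ with $j\le k'<k$. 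Their bases $(Ai',Aj)$ and $(Aj,Ak')$ have types $i\,i'$ and $k'k$, and the second assertion of Proposition~\ref{pr:2edges} forces $i'=k'$, hence $i'=k'=j$. So the two tiles are exactly $\Delta(Aij|Ai,Aj)$ and $\Delta(Ajk|Aj,Ak)$, the required $M$-configuration. Neither the hypothesis $Aik\notin V_K$ nor any extra weak-separation reasoning is needed; the ``competing arrangements'' you worry about do not arise.

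Your uniqueness argument is correct but takes a different route from the paper. The paper reconstructs $K$ directly from its spectrum: Proposition~\ref{pr:X-Xi} pins down all V-edges, these determine the $\Delta$- and $\nabla$-tiles, and then an explicit greedy procedure (using Proposition~\ref{pr:2edges} and the rounding of a lens) extracts the lenses of each girdle one by one. Your inductive argument via $\eta$ is slicker and reuses the flip machinery already in hand --- the key point, that lowering and raising flips on combies are mutually inverse and locally determined, is exactly what the paper records at the end of the flip description. The trade-off is that the paper's reconstruction is constructive: it gives an algorithm that reads off the combi from $\Fscr$ without first flipping down to $\Iscr_n$.
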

  \begin{proof}
~To see the existence of $K$ with $V_K=\Fscr$, it suffices to show that flips
in combies and in maximal w-collections are consistent (taking into
account~(A),(B) in the proof of Theorem~\ref{tm:combi-max-ws}). This is
provided by the following:
  \begin{numitem1} \label{eq:AAA}
for a combi $K$, if $V_K$ contains the sets $Ai,Ak,Aij,Ajk,Aik$ for some
$i<j<k$ and $A\subseteq[n]-\{i,j,k\}$, then $K$ contains the $\nabla$-tiles
$\nabla=\nabla(Ai|Aij,Aik)$ and $\nabla'=\nabla(Ak|Aik,Ajk)$.
  \end{numitem1}
(So $\nabla,\nabla'$ form a W-configuration and we can make a lowering flip in
$K$ to obtain a combi $K'$ with $V_{K'}=(V_K-\{Aik\})\cup\{Aj\}$. This matches
the flip $Aik\rightsquigarrow Aj$ in the corresponding w-collection. The
assertion on raising flips is symmetric.)

Indeed, Proposition~\ref{pr:X-Xi} ensures the existence of V-edges
$e_1=(Ai,Aij)$, $e_2=(Ai,Aik)$, $e_3=(Ak,Aik)$, $e_4=(Ak,Ajk)$ in $K$.
Obviously, the angle between the edges $e_1$ and $e_2$ is covered by one or
more $\nabla$-tiles; let $\tilde\nabla=\nabla(Ai|Aij',Aik)$ be the rightmost
tile among them (namely, the one containing the edge $e_2$). Similarly, let
$\tilde\nabla' =\nabla(Ak|Aik,Aj''k)$ be the leftmost tile among the
$\nabla$-tiles lying between the edges $e_3$ and $e_4$. Clearly $j\le j'<k$ and
$i<j''\le j$. By Proposition~\ref{pr:2edges}, $j'=j=j''$. Therefore,
$\tilde\nabla=\nabla$ and $\tilde\nabla'=\nabla'$, as required in~\refeq{AAA}.

Finally, $K$ is determined by its spectrum $V_K$ (yielding the required
uniqueness). Indeed, using Proposition~\ref{pr:X-Xi}, we can uniquely restore
the V-edges of $K$. This determines the set of $\Delta$- and $\nabla$-tiles of
$K$. Now for $h=1,\ldots,n-1$, consider the region (girdle) $\Lambda_h$ in $Z$
bounded from below by the directed path $L_h$ formed by the base edges of
$\nabla$-tiles of level $h-\frac12$, and bounded from above by the directed
path $U_h$ formed by the base edges of $\Delta$-tiles of level $h+\frac12$.
Then $\Lambda_h$ is the union of lenses of level $h$, and we have to show that
these lenses are restored uniquely.

It suffices to show this for the disk $D$ in $\Lambda_h$ between two
consecutive critical vertices $u,v$ (see Sect.~\SSEC{combi}). The lower
boundary $L_D$ of $D$ is the part of $L_h$ beginning at $u$ and ending at $v$;
let $L_D=(u=X_0,e_1,X_1,\ldots,e_r,X_r=v)$. Each edge $e_p=(X_{p-1},X_p)$
belongs to the lower boundary of some lens $\lambda$ with center
$\Olow_\lambda=X_{p-1}\cup X_p$ (and we have $X_{p-1}=\Olow_\lambda-k$ and
$X_p=\Olow_\lambda-j$ for some $j<k$). From Proposition~\ref{pr:2edges} it
follows that consecutive edges $e_p,e_{p-1}$ belong to the same lens if and
only if $X_{p-1}\cup X_p=X_p\cup X_{p+1}$. Also there exists a lens $\lambda$
such that $L_\lambda$ is entirely contained in $L_D$. Such a $\lambda$ is
characterized by the following conditions:

(i) $L_\lambda$ is a maximal part $X_pX_{p+1}\ldots X_q$ of $L_D$ satisfying
$q\ge p+2$ and $X_p\cup X_{p+1}=\cdots=X_{q-1}\cup X_q$, and

(ii) there exists a vertex $Y\ne X_p,X_q$ in $D$ such that $X_p\cap Y=Y\cap
X_q=X_p\cap X_q$.

In this case, $\ell_\lambda=X_p$, ~$r_\lambda=X_q$,
~$L_\lambda=X_pX_{p+1}\ldots X_q$, and $U_\lambda$ is formed by $X_p,X_q$ and
all those vertices $Y$ that satisfy (ii). (Conditions~(i),(ii) are necessary
for a lens $\lambda$ with $L_\lambda\subseteq L_D$. To see the sufficiency,
suppose that $L_\lambda\cap L_D=X_pX_{p+1}\ldots X_q$ but
$L_\lambda\not\subseteq L_D$, i.e., either $\ell_\lambda\ne X_p$ or
$r_\lambda\ne X_q$ or both. Consider the rounding $\Omega_\lambda$ of $\lambda$
(see Sect.~\SSEC{combi}). One can realize that a point $Y$ as in (ii) is
located in the \emph{interior} of $\Omega_\lambda$, which is impossible.)

Once a lens $\lambda$ satisfying (i),(ii) is chosen, we ``remove'' it from $D$
and repeat the procedure with the lower boundary of the updated (reduced) $D$,
and so on. Upon termination of the process for all $h$, we obtain the list of
lenses of $K$, and this list is constructed uniquely.
  \end{proof}


\section{Proofs}  \label{sec:proofs}

In this section, we prove the assertions stated but left unproved in
Sects.~\SEC{cyclic} and~\SEC{tilings}, namely
Propositions~\ref{pr:Rin-Rout},~\ref{pr:X-Xi},~\ref{pr:2edges} and
Theorem~\ref{tm:cyc_pattern}.

We will use a technique of contractions and expansions  which transform combies
on $Z_n$ into ones on $Z_{n-1}$, and back. These are analogous to those
developed for g-tilings in~\cite[Sec.~8]{DKK1} (see also~\cite[Sec.~6]{DKK2}),
but are arranged simpler because of the planarity of objects we deal with.


\subsection{$n$-contraction}  \label{ssec:contract}

The $n$-contraction operation is, in fact, well known and rather transparent
when we deal with a rhombus tiling $T$ on the zonogon $Z=Z_n$. In this case, we
take the sequence $Q$ of rhombi of type $\ast n$ (where $\ast$ means any number
$i$ different from $n$) in which the first rhombus contains the first edge
(having type $n$) in $rbd(Z)$, the last rhombus contains the last edge in $\ell
bd(Z)$, and each pair of consecutive rhombi shares an edge of type $n$. The
operation consists in shrinking each rhombus $\tau=\tau(X;i,n)$ in $Q$ into the
only edge $(X,Xi)$. Under this operation, the right boundary of $Q$ is shifted
by $-\xi_n$, getting merged with the left one, each vertex $Y$ of $T$
containing the element $n$ (i.e., $Y$ lies on the right from $Q$) turns into
$Y-n$, and the resulting set $T'$ of rhombi forms a correct rhombus tiling on
the zonogon $Z_{n-1}$ (generated by $\xi_1,\ldots,\xi_{n-1}$). This $T'$ is
called the $n$-\emph{contraction} of $T$.

In case of c-tilings, the $n$-contraction operation becomes less trivial since,
besides triangles (``semi-rhombi''), it should involve lenses of type $\ast n$.
Below we describe this in detail.

Consider a combi $K$ on $Z$. Let $\Tscr^n$ be the set of tiles of types $\ast
n$ in $K$; it consists of the $\Delta$-tiles $\Delta(A|BC)$ whose left edge
$(B,A)$ is of type $n$, the $\nabla$-tiles $\nabla(A'|B'C')$ whose right edge
$(A',C')$ is of type $n$, and the lenses $\lambda$ of type $\ast n$. Note that
the first edge of $L_\lambda$ has type $jn$, and the last edge of $U_\lambda$
has type $j'n$ for some $j,j'$.

Let $E^n$ denote the set of edges of type $n$ or $\ast n$ in $K$.

By an $n$-\emph{strip}, we mean a maximal sequence $Q=(\tau_0,\tau_1,\ldots,
\tau_N)$ of tiles in $\Tscr^n$ such that for any two consecutive
$\tau=\tau_{p-1}$ and $\tau'=\tau_p$, their intersection $\tau\cap\tau'$
consists of an edge $e\in E^n$, and:

(i) if $e$ has type $n$, then $\tau$ is a $\Delta$-tile and $\tau'$ is a
$\nabla$-tile;

(ii) if $e$ has type $\ast n$, then $\tau$ lies below $e$ and $\tau'$ lies
above $e$.

In case~(ii), there are four possibilities: (a) $\tau$ is a $\nabla$-tile,
$\tau'$ is a $\Delta$-tile, and $e$ is their common base; (b) $\tau$ is a
$\nabla$-tile, $\tau'$ is a lens, and $e$ is the base of $\tau$ and the first
edge of $L_{\tau'}$; (c) $\tau,\tau'$ are lenses, and $e$ is the last edge of
$U_\tau$ and the first edge of $L_{\tau'}$; and (d) $\tau$ is a lens, $\tau'$
is a $\Delta$-tile, and $e$ is the last edge of $U_\tau$ and the base of
$\tau'$. The possible cases of $\tau,\tau'$ are illustrated in the picture,
where their common edge $e$ is drawn bold.

\vspace{-0.3cm}
\begin{center}
\includegraphics{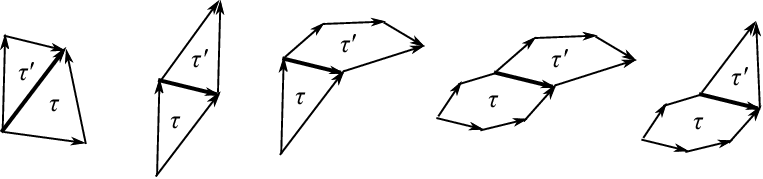}
\end{center}
\vspace{-0.3cm}

The following property is useful:
  \begin{numitem1} \label{eq:strip}
there is only one $n$-strip $Q$ as above; each tile of $\Tscr^n$ occurs in $Q$
exactly once; $Q$ begins with the $\nabla$-tile in $K$ containing the first
edge $e^r_1=(z^r_0,z^r_1)$ of $rbd(Z)$ and ends with the $\Delta$-tile in $K$
containing the last edge $e^\ell_n=(z^\ell_{n-1},z^\ell_n)$ of $lbd(Z)$.
  \end{numitem1}
This follows from three facts: (i) Each tile in $\Tscr^n$ contains exactly two
edges from $E^n$; (ii) each edge in $E^n$ belongs to exactly two tiles from
$\Tscr^n$, except for the edges $e^r_1$ and $e^\ell_n$, which belong to one
tile each; and (iii) $Q$ cannot be cyclic. To see validity of~(iii), observe
that if $\tau_{p-1},\tau_p$ are triangles, then either the level of $\tau_p$ is
greater than that of $\tau_{p-1}$ (when $\tau_{p-1}$ is a $\nabla$-tile), or
the level of the \emph{base} of $\tau_p$ is greater than that of $\tau_{p-1}$
(when $\tau_{p-1}$ is a $\Delta$-tile). As to traversing across lenses, we rely
on the fact that the relation: A lens $\lambda$ is ``higher'' than a lens
$\lambda'$ if $L_\lambda$ and $U_{\lambda'}$ share an edge, induces a poset on
the set of all lenses in $K$ (which in turn appeals to evident topological
reasonings using the facts that the lenses are convex and all H-edges are
``directed to the right'').

Note that for each tile $\tau\in\Tscr^n$ and its edges $e,e'\in E^n$,
~$bd(\tau)-\{e,e'\}$ consists of two directed paths, \emph{left} and
\emph{right} ones (of which one is a single vertex when $\tau$ is a triangle).
The concatenation of the left (resp. right) paths along $Q$ gives a directed
path from $z^r_0=(0,0)$ to $z^\ell_{n-1}$ (resp. from $z^r_1$ to
$z^\ell_n=z^r_n$); we call this path the \emph{left} (resp. \emph{right})
\emph{boundary} of the strip $Q$ and denote as $\Pleft$ (resp. $\Pright$).

Let $\Zleft$ ($\Zright$) be the region in $Z$ bounded by $\Pleft$ and the part
of $lbd(Z)$ from $(0,0)$ to $z^\ell_{n-1}$ (resp. bounded by $\Pright$ and the
part of $rbd(Z)$ from $z^r_1$ to $z^r_n$). Accordingly, the graph
$(V_K,E_K-E^n)$ consists of two connected components, the \emph{left subgraph}
$\Gleft$ and the \emph{right subgraph} $\Gright$ (lying in $\Zleft$ and
$\Zright$, respectively), and we denote by $\Kleft$ and $\Kright$ the
corresponding subtilings in $K-Q$. It is easy to see that $n\notin X$ (resp.
$n\in X$) holds for each vertex $X$ of $\Gleft$ (resp. $\Gright$).

Figure~\ref{fig:contract} illustrates the construction of $\Pleft, \Pright,
\Gleft, \Gright$ for the combi $K$ with $n=4$ from Fig.~\ref{fig:combi}; here
the edges of types $4$ and $\ast 4$ in $K$ are drawn by dotted lines.

\begin{figure}[htb]
\vspace{0cm}
\begin{center}
\includegraphics{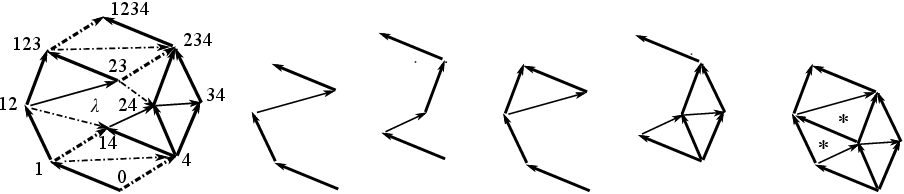}
\end{center}
\vspace{-0.5cm}
 \caption{From left to right: $K$; ~$\Pleft$; ~$\Pright$; ~$\Gleft$; ~$\Gright$; ~$K'$}
 \label{fig:contract}
  \end{figure}

The $n$-\emph{contraction operation} applied to $K$ makes the
following.\smallskip

\noindent(Q1) ~The region $\Zleft$ preserves, while $\Zright$ is shifted by the
vector $-\xi_n$. As a result, the right boundary of the shifted $\Zright$
becomes the right boundary of the zonogon $Z_{n-1}$ (while the left boundary of
$\Zleft$ coincides with $lbd(Z_{n-1})$). Accordingly, the subtiling $\Kleft$
preserves, and $\Kright$ is shifted so that each vertex $X$ becomes $X-n$.
\smallskip

\noindent(Q2) ~The edges in $E^n$ and the triangles in $\Tscr^n$ vanish.
\smallskip

\noindent(Q3) ~The lenses $\lambda$ in $\Tscr^n$ are transformed as follows.
Let $U_\lambda=X_0X_1\ldots X_p$ and $L_\lambda=Y_0Y_1\ldots Y_q$ (using path
notation via vertices); so $X_0=Y_0=\ell_\lambda$, $X_p=Y_q=r_\lambda$, and
$(Y_0,Y_1),(X_{p-1},X_p)\in E^n$. Under the above shift, each vertex $Y_d$ with
$d>0$ becomes $Y'_d:=Y_d-n$. Since $r_\lambda=\Oup_\lambda+\xi_n$ and
$Y'_q=Y_q-n$, the point $Y'_q$ coincides with $\Oup_\lambda$; hence $Y'_q$
becomes the center for $X_0,\ldots,X_{p-1}$. In its turn, the equality
$\ell_\lambda=\Olow_\lambda-\xi_n$ implies that $X_0$ becomes the center for
$Y'_1,\ldots,Y'_q$. We fill the space between the paths $X_0\ldots X_{p-1}$ and
$Y'_1\ldots Y'_q$ with new $\Delta$-tiles
$\Delta(X_0|Y'_1Y'_2),\ldots,\Delta(X_0|Y'_{q-1}Y'_q)$ and new $\nabla$-tiles
$\nabla(Y'_q|X_0X_1),\ldots,\nabla(Y'_q|X_{p-2}X_{p-1})$.

The transformation of a lens with $p=4$ and $q=3$ is illustrated in
Fig.~\ref{fig:lens_contr}.

\begin{figure}[htb]
\vspace{0cm}
\begin{center}
\includegraphics{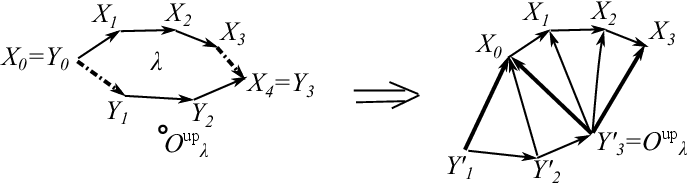}
\end{center}
\vspace{-0.5cm}
 \caption{Contraction on a lens}
 \label{fig:lens_contr}
  \end{figure}

We associate with the lens $\lambda$ as above in $K$ the zigzag path
$Y'_1X_0Y'_qX_{p-1}$, denoted as $P_\lambda$, in the resulting object (it is
drawn in bold in the right fragment of Fig.~\ref{fig:lens_contr}). One may say
that the transformation described in~(Q3) ``replaces the lens $\lambda$ by the
zigzag path $P_\lambda$ with two inscribed fillings'' (consisting of $\Delta$-
and $\nabla$-tiles), and we refer to such a transformation as a \emph{lens
reduction}. (As one more illustration, observe that the lens $\lambda$ with
$p=q=2$ from the leftmost fragment of Fig.~\ref{fig:contract} turns, under the
lens reduction, into two triangles marked by $\ast$ in the rightmost fragment
of that figure.)

Let $K'$ be the set of tiles occurring in $\Kleft$ and in the shifted $\Kright$
plus those $\Delta$- and $\nabla$-tiles that arise instead of the lenses in
$\Tscr^n$ as described in~(Q3). We call $K'$ the $n$-\emph{contraction} of $K$
and denote it as $K/n$. Analyzing the above description, one can conclude with
the following
  \begin{prop} \label{pr:n-contract}
$K/n$ is a correct combi on the zonogon $Z_{n-1}$.
  \end{prop}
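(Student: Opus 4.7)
The plan is to verify the three defining axioms of a planar tiling on $Z_{n-1}$ --- the tiles cover $Z_{n-1}$; any two meet only in a common vertex or a common edge; each boundary edge of $Z_{n-1}$ lies in exactly one tile --- together with the condition that every new tile is a valid $\Delta$-tile, $\nabla$-tile, or lens in the sense of Section~\SSEC{combi}. I would organize the verification around the three pieces of $K'$: the unchanged subtiling $\Kleft$, the shifted subtiling $\Kright-\xi_n$, and the new triangles produced by each L-Z transformation in~(Q3).

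First I would confirm the gluing along the strip $Q$. By inspecting the four shapes that a tile $\tau\in\Tscr^n$ can take ($\Delta$- or $\nabla$-tile of type $\ast n$, or lens of type $\ast n$), one observes that the two edges $e,e'\in E^n$ of $\tau$ separate vertices containing $n$ from vertices not containing $n$: the left subpath of $bd(\tau)-\{e,e'\}$ consists of $n$-free vertices and the right subpath of $n$-containing ones, while the edges $e,e'$ themselves realize either the step ``add $n$'' or ``swap some $j<n$ for $n$''. Consequently, after shifting $\Kright$ by $-\xi_n$, each vertex on $\Pright$ lands exactly on its counterpart on $\Pleft$, and the two paths match as oriented polylines. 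Thus $\Zleft$ and the shifted $\Zright$ fit together precisely along the (image of the) path $\Pleft$, and their union fills $Z_{n-1}$ minus the interiors of the shifted tiles of $\Tscr^n$.

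Next, for each lens $\lambda\in\Tscr^n$ I would check that the L-Z transformation produces valid tiles that exactly fill the former position of $\lambda$. The identities $r_\lambda=\Oup_\lambda+\xi_n$ and $\ell_\lambda=\Olow_\lambda-\xi_n$ place $Y'_q$ at $\Oup_\lambda$ and leave $X_0$ at $\Olow_\lambda-\xi_n$; since the $X_s$ lie on the unit upper semicircle about $\Oup_\lambda$ and the $Y'_r$ on the unit lower semicircle about $X_0$, the tiles $\Delta(X_0|Y'_{r-1}Y'_r)$ and $\nabla(Y'_q|X_{s-1}X_s)$ are genuine isosceles triangles whose base types are inherited from $L_\lambda$ and $U_\lambda$ and whose V-edges have the correct types. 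The main obstacle, and the step that deserves the most care, is ruling out overlaps between the two inscribed fillings and the surrounding portions of $\Kleft$ and the shifted $\Kright$; this is handled by invoking the property recorded in Section~\SSEC{combi} that no vertex of $K$ lies in the interior of the rounding $\Omega_\lambda$, which forces the zigzag $P_\lambda$ to meet the adjacent subtilings only at the four endpoints $X_0,X_{p-1},Y'_1,Y'_q$. Once this incidence picture is in place, the three tiling axioms on $Z_{n-1}$ follow directly, and $K'$ is a combi.
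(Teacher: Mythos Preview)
The paper does not spell out a proof of this proposition: it is stated as following from ``a routine verification'' and carries an immediate $\qed$. Your overall plan (check that $\Kleft$, the shifted $\Kright$, and the L-Z replacements glue correctly on $Z_{n-1}$) is the natural one, but your first paragraph contains a genuine error that undermines the second.

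You claim that after shifting $\Kright$ by $-\xi_n$, ``each vertex on $\Pright$ lands exactly on its counterpart on $\Pleft$, and the two paths match as oriented polylines.'' This is true along the triangle portions of the strip but \emph{false} along any lens $\lambda\in\Tscr^n$. For such a $\lambda$, with $U_\lambda=X_0\ldots X_p$ and $L_\lambda=Y_0\ldots Y_q$, the two $E^n$-edges are $(X_{p-1},X_p)$ and $(Y_0,Y_1)$; removing them leaves the left subpath $X_0\ldots X_{p-1}$ (these vertices avoid $n$) and the right subpath $Y_1\ldots Y_q$ (these contain $n$). After the shift the latter becomes $Y'_1\ldots Y'_q$, and this is \emph{not} the same polyline as $X_0\ldots X_{p-1}$: for example $Y'_q=\Oup_\lambda$ while $X_{p-1}=\Oup_\lambda\, i_{p-1}$. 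The very purpose of~(Q3) is to tile the gap between these two non-coinciding subpaths with the new $\Delta$- and $\nabla$-tiles. So your two paragraphs are inconsistent: the first asserts there is no gap, the second fills it.

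The correct picture is that along each maximal run of triangles in $Q$ the two boundaries do merge (each such triangle collapses to a single V-edge), whereas at each lens $\lambda$ one is left with a region bounded above by $X_0\ldots X_{p-1}$, below by $Y'_1\ldots Y'_q$, and on the sides by the new V-edges $(Y'_1,X_0)$ (type $j_1$) and $(Y'_q,X_{p-1})$ (type $i_{p-1}$); this region is precisely what the triangles of~(Q3) tile. The matching with neighbouring members of the strip is then through these two side V-edges, not through any identification of $\Pleft$ with the shifted $\Pright$. Finally, the rounding property about $\Omega_\lambda$ is not the relevant tool for ruling out overlaps here; what one actually checks is that each boundary edge of the new region coincides with a boundary edge of an adjacent tile in $\Kleft$, in the shifted $\Kright$, or in the strip, which is a direct bookkeeping verification.
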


Acting in a similar fashion w.r.t. the set $\Tscr^1$ of tiles of types $1\ast$
in $K$, one can construct the corresponding \emph{1-contraction} $K/1$ of $K$,
which is a combi on the $(n-1)$-zonogon generated by the vectors
$\xi_2,\ldots,\xi_n$. This corresponds to the $n$-contraction of the combi
$\hat K$ that is the mirror reflection of $K$ w.r.t. the vertical axis. (More
precisely, $\hat K$ is obtained from $K$ by changing each generator
$\xi_i=(x_i,y_i)$ to $\hat\xi_{n-i+1}=(-x_i,y_i)$.)


\subsection{$n$-expansion}  \label{ssec:expan}

Now we describe a converse operation, called the $n$-\emph{expansion one}. It
is easy when we deal with a rhombus tiling $T'$ on the zonogon $Z'=Z_{n-1}$. In
this case, we take a \emph{directed} path $P$ in $G_{T'}$ going from the bottom
$(0,0)$ to the top $z_{n-1}$ of $Z'$. Such a $P$ splits $Z'$ and $T'$ into the
corresponding left and right parts. We shift the right region of $Z'$ by the
vector $\xi_n$ (and shift the right subtiling of $T'$ by adding the element $n$
to its vertices) and then fill the space between $P$ and its shifted copy with
corresponding rhombi of type $\ast n$. This results in a corrected rhombus
tiling $T$ on the zonogon $Z_n$ in which the new rhombi form its $n$-strip, and
the $n$-contraction operation applied to $T$ returns $T'$.

In case of c-tilings, the operation becomes somewhat more involved since now we
should deal with a path $P$ in $Z'$ which is not necessarily directed.

More precisely, we consider a combi $K'$ on $Z'$ and a simple path
$P=(A_0,e_1,A_1,\ldots,$ $e_M,A_M)$ in the graph $G_{K'}$ such that:
\smallskip

\noindent(P1) ~$P$ goes from the bottom to the top of $Z'$, i.e.,
$A_0=\emptyset$ and $A_M=[n-1]$, and all edges of $P$ are V-edges; \smallskip

\noindent(P2) ~for any two consecutive edges of $P$, at least one is a forward
edge, i.e., $|A_{d-1}|>|A_d|$ implies $|A_{d}|<|A_{d+1}|$;\smallskip

\noindent(P3) ~any zigzag subpath in $P$ goes to the right; in other words, if
$|A_{d-1}|=|A_{d+1}|\ne|A_d|$, then either $A_{d-1}=A_d\,i$ and
$A_{d+1}=A_d\,j$ for some $i<j$, or $A_{d-1}=A_d-i'$ and $A_{d+1}=A_d-j'$ for
some $i'>j'$.
\smallskip

Borrowing terminology in~\cite[Sec.~8]{DKK1}, we call such a $P$ a \emph{legal}
path for $K'$. It splits $Z'$ into two closed simply connected regions
$R_1,R_2$ (the \emph{left} and \emph{right} ones, respectively), where $R_1\cup
R_2=Z'$, $R_1\cap R_2=P$, ~$R_1$ contains $lbd(Z')$, and $R_2$ contains
$rbd(Z')$. Let $G_i$ and $K_i$ denote, respectively, the subgraph of $G_{K'}$
and the subtiling of $K'$ contained in $R_i$, $i=1,2$.

We call a vertex $A_d$ of $P$ a \emph{slope} if $|A_{d-1}|<|A_d|<|A_{d+1}|$, a
\emph{peak} if $|A_{d-1}|=|A_{d+1}|<|A_d|$, and a \emph{pit} if
$|A_{d-1}|=|A_{d+1}|>|A_d|$. (By~(P2), the case $|A_{d-1}|>|A_d|>|A_{d+1}|$ is
impossible.) When $A_d$ is a peak, the angle between the edges $e_d$ and
$e_{d+1}$ is covered by a sequence of $\Delta$-tiles $\Delta(A_d|Y_{r-1}Y_r)$,
$r=1,\ldots,q$, ordered from left to right, where $Y_0=A_{d-1}$ and
$Y_q=A_{d+1}$. We call this sequence the \emph{(lower) filling} at $A_d$ w.r.t.
$P$. Similarly, when $A_d$ is a pit, the angle between $e_d$ and $e_{d+1}$ is
covered by a sequence of $\nabla$-tiles $\nabla(A_d|X_{r-1}X_r)$,
$r=1,\ldots,p$, called the \emph{(upper) filling} at $A_d$ w.r.t. $P$.

The $n$-\emph{expansion operation} applied to $K'$ and $P$ constructs a combi
$K$ on $Z_n$ as follows. \smallskip

\noindent(E1) ~$K$ inherits all tiles of $K_1$ except for those in the fillings
of pits of $P$. For each tile $\tau$ of $K_2$ not contained in the filling of
any peak of $P$, $K$ receives the shifted tile $\tau+\xi_n$. Accordingly, the
vertex set of $K$ consists of the vertices of $G_1$ except for the pits of $P$,
and the vertices of the form $Xn$ for all vertices $X$ of $G_2$ except for the
peaks of $P$. In particular, each slope $X$ of $P$ (and only these vertices of
$K'$) creates two vertices in $K$, namely $X$ and $Xn$.\smallskip

\noindent(E2) ~Each slope $A_d$ creates two additional tiles in $K$: the
$\nabla$-tile $\nabla(A_d|A_{d+1},A_d\,n)$ of type $in$ and the $\Delta$-tile
$\Delta(A_d\,n|A_d,A_{d-1}\,n)$ of type $jn$, where $i,j$ are the types of the
edges $(A_d,A_{d+1})$ and $(A_{d-1},A_d)$, respectively. See the left fragment
of Fig.~\ref{fig:expan}.

\begin{figure}[htb]
\vspace{0cm}
\begin{center}
\includegraphics[scale=0.95]{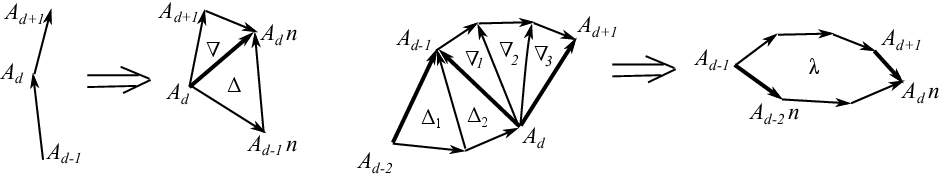}
\end{center}
\vspace{-0.5cm}
 \caption{$n$-expansion in a neighborhood of a slope (left) and a zigzag (right)}
 \label{fig:expan}
  \end{figure}

\noindent In addition, the first and last vertices of $P$ create two extra
tilings: $\nabla(A_0=\emptyset|A_1,A_0\,n=z^r_1)$ and
$\Delta(A_M\,n=[n]|A_M,A_{M-1}\,n)$. \smallskip

\noindent(E3) ~Each backward edge $e_d=(A_d,A_{d-1})$ of $P$ (equivalently,
each peak-pit pair $A_{d-1},A_d$) creates a new lens $\lambda$ in $K$ by the
following rule. Let $\Delta(A_{d-1}|Y_{r-1}Y_r)$, $r=1,\ldots,q$, be the
filling at $A_{d-1}$ w.r.t. $P$, and let $\nabla(A_d|X_{r-1}X_r)$,
$r=1,\ldots,p$, be the filling at $A_d$ (so $Y_0=A_{d-2}$, ~$Y_q=A_d$,
~$X_0=A_{d-1}$, and $X_p=A_{d+1}$). Then $\lambda$ is such that:
$\ell_\lambda=A_{d-1}$, $r_\lambda=A_d\,n$, the upper boundary $U_\lambda$ has
the vertex sequence $X_0,X_1,\ldots,X_p,A_d\,n$, and the lower boundary
$L_\lambda$ the sequence $A_{d-1},Y_0\,n,Y_1\,n,\ldots,Y_q\,n$. See the right
fragment of Fig.~\ref{fig:expan} (where $p=3$ and $q=2$). We refer to the
transformation in~(E3) as a \emph{lens creation}; this is converse to a lens
reduction described in~(Q3) of Sect.~\SSEC{contract}.
\smallskip

One can check that $K$ obtained in this way is indeed a correct combi on $Z_n$.
It is called the $n$-\emph{expansion} of $K'$ w.r.t. the legal path $P$. The
corresponding sequence $Q$ of tiles of types $\ast n$ in $K$ is just formed by
the $\Delta$- and $\nabla$-tiles induced by the slopes of $P$ (together with
$A_0,A_M$) as described in~(E2), and by the lenses induced by the backward
edges (or the three-edge zigzags) of $P$ as described in~(E3).

A straightforward examination shows that the $n$-contraction operation applied
to $K$ returns the initial $K'$ (and the legal path $P$ in it is obtained by a
natural deformation of the left boundary of $Q$); details are left to the
reader. As a result, we conclude with the following.
  \begin{theorem} \label{tm:contr-expand}
The correspondence $(K',P)\mapsto K$, where $K'$ is a c-tiling on $Z_{n-1}$,
~$P$ is a legal path for $K'$, and $K$ is the $n$-expansion of $K'$ w.r.t. $P$,
gives a bijection between the set of such pairs $(K',P)$ and the set of
c-tilings on $Z_n$. Under this correspondence, $K'$ is the $n$-contraction
$K/n$ of $K$.
  \end{theorem}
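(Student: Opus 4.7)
My plan is to verify the claimed bijection by showing that the two operations defined in Sections~\SSEC{contract}--\SSEC{expan} are mutually inverse. Three things must be checked: (a)~the $n$-expansion of a legal pair $(K',P)$ yields a valid c-tiling on $Z_n$; (b)~for any c-tiling $K$ on $Z_n$, the $n$-contraction produces a pair $(K/n,P)$ with $P$ legal for $K/n$; and (c)~the two compositions are identities.

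For~(a) I would check the combi axioms on the assembled output $K$ directly from (E1)--(E3). The pieces inherited from $K_1$ (minus the pit $\Delta$-fillings) and from $K_2+\xi_n$ (minus the peak $\nabla$-fillings) are locally valid because $K'$ is a c-tiling. Each slope $A_d$ inserts a $\nabla$-$\Delta$ pair sharing a V-edge of type $n$; each backward edge $(A_d,A_{d-1})$ inserts a lens whose upper arc matches the (shifted) peak-filling boundary and whose lower arc matches the pit-filling boundary, with $\Oup_\lambda=A_d$ and $\Olow_\lambda=A_{d-1}\,n$ as required. The endpoint insertions $\nabla(\emptyset|A_1,\{n\})$ and $\Delta([n]|A_{M-1}\,n,A_M)$ close the boundary against $rbd(Z_n)$ and $lbd(Z_n)$.

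For~(b), by~\refeq{strip} the $n$-strip $Q$ of $K$ is unique. I define $P$ as the path in $K/n$ obtained from the left boundary $\Pleft$ of $Q$ by applying the L-Z transformation of~(Q3) inside each lens: a lens $\lambda\in Q$ with $U_\lambda=X_0X_1\ldots X_p$ and $L_\lambda=Y_0Y_1\ldots Y_q$ contributes the zigzag $Y'_1,X_0,Y'_q,X_{p-1}$ to $P$ (where $Y'_r=Y_r\setminus\{n\}$), featuring the backward edge $(X_0,Y'_q)$, while each triangle in $Q$ contributes a forward V-edge. Condition~(P1) is immediate since $\Pleft$ runs from $\emptyset$ to $[n-1]$. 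For (P2), every backward edge lies strictly inside a lens-zigzag flanked by forward edges. For (P3), the monotonicity $i_0<\cdots<i_p=n$ along $U_\lambda$ and $n=j_0>\cdots>j_q=i_0$ along $L_\lambda$ gives $Y'_q=X$ (the upper center of $\lambda$), $X_0=Y'_q\,i_0$ and $X_{p-1}=Y'_q\,i_{p-1}$ with $i_0<i_{p-1}$ at the pit, and $Y'_1=X_0-j_1$ and $Y'_q=X_0-i_0$ with $j_1>i_0$ at the peak, matching the rightward-zigzag requirement.

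Part~(c) is a direct tile-bookkeeping check: the tiles added in (E2)--(E3) during expansion form precisely the $n$-strip of the resulting $K$ (by the characterization in~\refeq{strip}), so contraction removes them and reverses each Z-L transformation into the corresponding L-Z transformation, restoring $(K',P)$; the opposite composition is analogous. The main obstacle I anticipate is the verification of (P2)--(P3) in part~(b), which requires tracking how lens zigzag contributions to $P$ concatenate with triangle-induced V-edges across the four subcases of strip adjacency~(ii); the key point unlocking it is that $|U_\lambda|,|L_\lambda|\ge 2$ forces each lens contribution to begin and end with a forward V-edge, preventing consecutive backward edges at lens seams.
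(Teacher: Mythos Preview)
Your proposal is correct and follows exactly the route the paper takes: the paper itself offers no argument beyond ``one can check that $K$ obtained in this way is indeed a correct combi'' and ``a straightforward examination shows that the $n$-contraction operation applied to $K$ returns the initial $K'$\ldots details are left to the reader,'' so you are simply supplying those omitted verifications with the natural three-part structure (expansion is well-defined, contraction yields a legal pair, the two compose to identities). One small slip to fix: in your sketch of~(E3) you have the arcs swapped --- the upper boundary $U_\lambda$ is built from the (unshifted) \emph{pit}-filling vertices $X_0,\ldots,X_p$ together with $A_d\,n$, while $L_\lambda$ is built from $A_{d-1}$ together with the \emph{shifted peak}-filling vertices $Y_r\,n$; your identifications $\Oup_\lambda=A_d$ and $\Olow_\lambda=A_{d-1}\,n$ are nonetheless correct.
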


Note that when we consider in $G_{K'}$ a path $P'$ defined similarly to $P$
with the only difference (in~(P3)) that any zigzag subpath in $P'$ goes
\emph{to the left}, then, duly modifying the expansion operation described
in~(E1)--(E3), we obtain what is called the \emph{1-expansion} of $K'$; this
gives an analog of Theorem~\ref{tm:contr-expand} concerning type 1. (Again, to
clarify the construction we can make the mirror reflection w.r.t. the vertical
axis.)


\subsection{Proofs}  \label{ssec:proofs}

We first prove Propositions~\ref{pr:X-Xi} and~\ref{pr:2edges}, thus completing
the proof of Theorem~\ref{tm:combi-max-ws}.\medskip

\noindent\emph{Proof of Proposition~\ref{pr:X-Xi}}. ~Let $X$ and $Xi$ be
vertices of a combi $K$ on the zonogon $Z_n$. We use induction on $n$ and,
assuming w.l.o.g. that $i\ne n$, consider the $n$-contraction $K'=K/n$ of $K$
(in case $i=n$, we should consider the 1-contraction $K/1$ and argue in a
similar way). Also we use terminology, notation and constructions from the
previous subsections. Two cases are possible.

(i) Let $n\notin X$. Then $X$ and $Xi$ belong to the left subgraph $\Gleft$ of
$G_K$ (w.r.t. the $n$-strip $Q$). Hence $X,Xi$ are vertices of $K'$, and by
induction $K'$ has edge $e=(X,Xi)$ (which is a V-edge). Let $P$ be the legal
path in $K'$ such that the $n$-expansion of $K'$ w.r.t. $P$ gives $K$ (i.e.,
$P$ is the ``image'' of the strip $Q$ in $K'$). Then $e$ is a V-edge of the
left subgraph $G_1$ of $G_{K'}$ (w.r.t. $P$). The only situation when $e$ might
be destroyed under the $n$-expansion operation is that $e$ belongs to a
$\nabla$-tile in the filling at some pit $A_d$ of $P$, implying $X=A_d$. But
this is impossible since $A_d$ induces only one vertex $A_d\,n$ in $K$.

(ii) Let $n\in X$. Then $X,Xi$ belong to $\Gright$. So $K'$ has vertices
$X'=X-n$ and $X''=Xi-n$, and by induction $K'$ has V-edge $e=(X',X'')$. For the
corresponding legal path $P$ in $K'$, the edge $e$ could not generate the edge
$(X'n,X''n)=(X,Xi)$ in $K$ only if $e$ belongs to a $\Delta$-tile in the
filling at some peak $A_d$ of $P$, implying $X''=A_d$. But $A_d$ induces only
one vertex $A_d$ in $K$ (instead of the required vertex $A_d\,n=Xi$); a
contradiction.

Thus, in all cases, $(X,Xi)$ is an edge of $K$, and we are done. \hfill\qed
\medskip

\noindent\emph{Proof of Proposition~\ref{pr:2edges}}. ~Let $e=(A,B)$,
$e'=(B,C)$, and $i<j''\le j'<k$ be as in the hypotheses of the first statement
in this proposition. Consider two cases. \medskip

\noindent\underline{\emph{Case 1}}. Suppose that the vertex $B$ has at least
one outgoing V-edge. Let $\tilde e=(B,D)$ and $\tilde e'=(B,D')$ be the
leftmost and rightmost edges among such V-edges, respectively. Then there exist
$\Delta$-tiles of the form $\Delta:=\Delta(D|A'B)$ and
$\Delta':=\Delta(D'|BC')$ (i.e., lying on the left of $\tilde e$ and on the
right of $\tilde e'$, respectively). Let the V-edges
$(A',D),(B,D),(B,D'),(C',D')$ be of types $\tilde k, \tilde j',\tilde
j'',\tilde i$, respectively. By the choice of $\tilde e$ and $\tilde e'$, we
have $\tilde j'\le\tilde j''$.

Next, the base $(A',B)$ of $\Delta$ has type $\tilde j'\tilde k$ and, by the
planarity of $K$, should lie above the edge $(A,B)$ (admitting the equality
$(A',B)=(A,B)$). This implies $j'\le \tilde j'$ (and $k\le \tilde k$), by
comparing $\Delta$ with the abstract $\Delta$-tile with the base $(A,B)$,
which, obviously, has type $j'k$. For a similar reason, since the base $(B,C')$
of $\Delta'$ lies above $(B,C)$, we have $\tilde j''\le j''$. Therefore,
$\tilde j''\le j''\le j'\le\tilde j'\le \tilde j''$. This gives equality
throughout, implying that $\Delta=\Delta(D|AB)$ and $\Delta'=\Delta(D|BC)$, as
required. \medskip

\noindent\underline{\emph{Case 2}}. Let $B$ have no outgoing V-edges. Then $B$
is an intermediate vertex in the lower boundary of some lens $\lambda$. Let
$\tilde e=(A',B)$ and $\tilde e'=(B,C')$ be the edges of $L_\lambda$ entering
and leaving $B$, respectively. Then $\tilde e,\tilde e'$ have types $\tilde
j\tilde k$ and $\tilde i\tilde j$ for some $\tilde i<\tilde j<\tilde k$. Since
the edge $\tilde e$ lies above $e$, and $\tilde e'$ lies above $e'$ (admitting
equalities), we have $j'\le\tilde j\le j''$. This together with $j''\le j'$
gives $e=\tilde e$ and $e'=\tilde e'$, and the result follows.

The second assertion in the proposition is symmetric. \hfill\qed

This completes the proof of Theorem~\ref{tm:combi-max-ws}.
\medskip

Now we return to a simple cyclic pattern $\Sscr=(S_1,\ldots,S_r=S_0)$, i.e.,
such that $|S_{i-1}\triangle S_i|=1$ for all $i$, and the sets $S_i$ are
different and weakly separated from each other (conditions (C1),(C2) in
Sect.~\SEC{cyclic}). By Theorem~\ref{tm:max-ws-combi}, $\Sscr$ is included in
the spectrum (vertex set) $V_K$ of some combi $K$ on $Z_n$.
Proposition~\ref{pr:X-Xi} implies that each pair of consecutive vertices
$S_{i-1},S_i$ is connected in $G_K$ by a V-edge directed from the smaller set
to the bigger one. This gives the corresponding cycle in $G_K$; we identify it
with the curve $\zeta_\Sscr$ in $Z_n$ (defined in Sect.~\SEC{cyclic}). The
planarity of $G_K$ implies that $\zeta_\Sscr$ is non-self-intersecting, as
required in Proposition~\ref{pr:Rin-Rout}.

Next we finish the proof of Theorem~\ref{tm:cyc_pattern}, as follows. Consider
arbitrary $X\in\Din$ and $Y\in\Dout$. Since $\Sscr\cup\{X\}$ is weakly
separated, it is included in the spectrum $V_K$ of some combi $K$, by
Theorem~\ref{tm:max-ws-combi}. Let $\Kin$ be the subtiling of $K$ formed by the
tiles occurring in $\Rin$. Then $X$ is a vertex of $\Kin$. In its turn,
$\Sscr\cup\{Y\}\subseteq V_{K'}$ for some combi $K'$, and $Y$ is a vertex of
the subtiling $\Kpout$ of $K'$ formed by the tiles occurring in $\Rout$ (or $Y$
is a vertex in $bd(Z_n)$). Since $\Rin\cap\Rout=\zeta_\Sscr$, the union
$\Kin\cup \Kpout$ gives a correct combi $\tilde K$ on $Z_n$ containing both $X$
and $Y$. Since $V_{\tilde K}$ is a w-collection (by
Theorem~\ref{tm:combi-max-ws}), we have $X\weak Y$, as required. Therefore,
$\Din,\Dout$ form a complementary pair (their union $\Dscr$ is w-pure). This
implies the w-purity of $\Din$ and $\Dout$, by Proposition~\ref{pr:complement}.
\hfill\qed
\medskip

The strong separation counterpart of this theorem, namely
Theorem~\ref{tm:strong_cycl}, is obtained in a similar way. Given a cyclic
pattern $\Sscr$ consisting of different strongly separated sets, the result
immediately follows from Theorem~\ref{tm:LZ_strong} and the fact that any
rhombus tiling on $\Rin$ and any rhombus tiling on $\Rout$ can be combined to
form a rhombus tiling on $Z_n$. The equality $r^s(\Dscr)=r^w(\Dscr)$ is
obvious, where $\Dscr=\hat \Dscr_{\Sscr}^{\rm in}$ or $\hat\Dscr_{\Sscr}^{\rm
out}$.
\medskip

\noindent\textbf{Remark 3.} Using combies, one can essentially simplify the
proof of the fundamental property that $2^{[n]}$ is w-pure, which is given
in~\cite{DKK2} by use of generalized tilings. On this way, we first weaken the
statement of Theorem~\ref{tm:combi-max-ws} as follows: for a combi $K$, the
spectrum $V_K$ is a \emph{largest} w-collection (i.e., it has the maximal
possible size $n(n+1)/2+1$). This is proved just as in Sect.~\ref{ssec:flips}
but without appealing to results in~\cite{DKK2}. Second, as is explained
in~\cite[Sec.~4]{DKK2}, the assertion that any maximal w-collection is largest
is reduced to the following: \emph{the partial order $\prec^\ast$ on a largest
w-collection $\Fscr$, given by $A\prec^\ast B\Leftrightarrow (A\lessdot B\;
\&\; |A|\le |B|)$, forms a lattice}, where $\lessdot$ is defined
in~\refeq{2relat}(iii). The core of the whole proof consists in showing that
the partial order $(\Fscr,\prec^\ast)$ coincides with the natural partial order
on the vertices of the combi $K$ associated with $\Fscr$ (where a vertex $u$ is
less than $v$ if there is a directed path from $u$ to $v$ in $G_K$); cf.
Theorem~6.1 in~\cite{DKK2}. This is proved by induction on $n$, using a
technique of $n$-contractions and $n$-expansions on combies (which is simpler
than an analogous technique for g-tilings). We omit details here.


\section{Special cases and generalizations}  \label{sec:gen}

In this section, we discuss possible ways to extend the w-purity result to more
general patterns. Along the way we demonstrate some representative special
cases.


\subsection{Semi-simple cyclic patterns}  \label{ssec:semi-simple}

We can slightly generalize Theorem~\ref{tm:cyc_pattern} by weakening
condition~(C1) in Section~\SEC{cyclic}. Now we admit, with a due care, cyclic
patterns $\Sscr$ having some repeated elements $C_i=C_j$. More precisely, in
the corresponding closed curve $\zeta_\Sscr$ in $Z=Z_n$ we allow only touchings
but not crossings, which is equivalent to saying that under a ``very small''
deformation the curve becomes non-self-intersecting. We refer to $\Sscr$
satisfying this requirement and condition~(C2) as \emph{semi-simple}. The
definitions of regions $\Rin,\Rout$ and domains $\Din,\Dout$ are modified in a
natural way. The desired generalization reads as follows:
  \begin{itemize}
\item[($\ast$)] \emph{For a semi-simple cyclic pattern $\Sscr$, the domains $\Din$ and
$\Dout$ form a complementary pair; as a consequence, both $\Din$ and $\Dout$
are w-pure.}
  \end{itemize}
This is shown in a similar way as Theorem~\ref{tm:cyc_pattern} and we omit a
proof here.
\smallskip

One can see that domains of types $\Dscr(\omega)$ and $\Dscr(\omega',\omega)$
exposed in Theorem~\ref{tm:LZconj}(i),(ii) are representable as $\Din$ for
special simple or semi-simple cyclic patterns $\Sscr$.

In particular, for the domain $\Dscr=\Dscr(\omega',\omega)$ with permutations
$\omega',\omega:[n]\to[n]$ such that $\Inv(\omega')\subset \Inv(\omega)$, the
cyclic pattern $\Sscr$ involves $\emptyset$, $[n]$, and the sets
${\omega'}^{-1}([i])$ and $\omega^{-1}([i])$ for $i=1,\ldots,n-1$; such an
$\Sscr$ is simple if ${\omega'}^{-1}([i])\ne\omega^{-1}([i])$ for all $i$, and
semi-simple otherwise. The picture below illustrates $\Sscr$ for
$\Dscr(\omega',\omega)$ with $n=4$ in the case $\omega'=1324$ and $\omega=3241$
(left), and in the case $\omega'=1324$ and $\omega=3142$ (right).

\begin{center}
\includegraphics{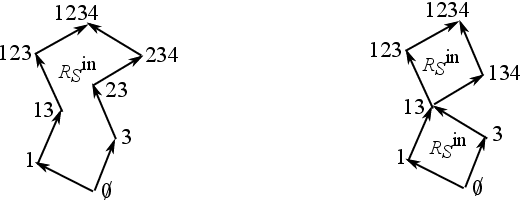}
\end{center}
\vspace{-0.3cm}


\subsection{Generalized cyclic patterns} \label{ssec:gen_cyc_pat}

Next we discuss another, more important way to extend the obtained w-purity
results, namely, we consider a \emph{generalized} cyclic pattern
$\Sscr=(S_1,\ldots,S_r=S_0)$ in $2^{[n]}$ (defined in Sect.~1). As before,
$\Sscr$ obeys~(C1) and~(C2), and now we assume that
  \begin{numitem1} \label{eq:genSscr}
for each $p=1,\ldots, r$, either $|S_{p-1}\triangle S_p|=1$, or
$|S_{p-1}\triangle S_p|=2$ and $|S_{p-1}|=|S_p|$.
  \end{numitem1}
In the former (latter) case, we say that $\{S_{p-1},S_p\}$ is a
\emph{1-distance pair} (resp. \emph{2-distance pair}) in $\Sscr$. In the latter
case, $S_{p-1}-S_p$ and $S_p-S_{p-1}$ consist of some singletons $i$ and $j$,
respectively, and setting $X:=S_{p-1}\cap S_p$ and $Y:=S_{p-1}\cup S_p$, we
have
  $$
  S_{p-1}=Xi=Y-j\quad \mbox{and}\quad S_p=Xj=Y-i.
  $$

We rely on the following assertion. (A property of a somewhat similar flavor
for plabic graphs is established in~\cite{OPS}.)
  \begin{prop} \label{pr:XiXj}
Suppose that a combi $K$ has two vertices of the form $A=Xi$ and $B=Xj$. Then
at least one of the following takes place:

{\rm(i)} $K$ contains the vertex $X$ (and therefore the edges $(X,A)$ and
$(X,B)$);

{\rm(ii)} $K$ contains the vertex $Xij$ (and therefore the edges $(A,Xij)$ and
$(B,Xij)$);

{\rm(iii)} both $A,B$ belong to the lower boundary of some lens in $K$;

{\rm(iv)} both $A,B$ belong to the upper boundary of some lens in $K$.
   \end{prop}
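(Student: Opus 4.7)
Without loss of generality $i<j$. Both $A=Xi$ and $B=Xj$ are vertices of $K$ at the same level $h:=|X|+1$, so both lie in the girdle $\Lambda_h$. Since $B-A=\eps_{ij}$ points horizontally to the right, $A$ is to the left of $B$ inside $\Lambda_h$, and the plan is to analyze the configuration via the H-edge structure of $K$ there and to induct on $n$ using the $n$-contraction machinery of Section~\SSEC{contract}, in the style of the proofs of Propositions~\ref{pr:X-Xi} and~\ref{pr:2edges}.

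Two easy sub-situations settle themselves immediately. When $A$ and $B$ are adjacent via a single H-edge of type $ij$, the two tiles sharing this edge identify the conclusion: below it sits either the $\nabla$-tile $\nabla(X|A,B)$ (giving $X\in V_K$, case (i)) or a lens $\lambda$ with $(A,B)$ on $U_\lambda$ (case (iv)); symmetrically above it sits either $\Delta(Xij|A,B)$ (case (ii)) or a lens with $(A,B)$ on $L_\lambda$ (case (iii)). When some lens $\lambda$ of $K$ already contains both $A,B$ on one of its boundary arcs, the description of $U_\lambda$ vertices as $\Oup_\lambda+k$ forces $\Oup_\lambda=A-i=X$ (case (iv)), while the description of $L_\lambda$ vertices as $\Olow_\lambda-k$ combined with $A=Xi$, $B=Xj$ forces $\Olow_\lambda=A+j=B+i=Xij$ (case (iii)). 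The mixed sub-situation $A\in U_\lambda$, $B\in L_\lambda$ of the same $\lambda$ reduces by a direct endpoint check to one of the preceding.

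For the remaining configurations, I would induct on $n$. Assume WLOG $n\ne i,j$. If $n\notin X$ then neither $A$ nor $B$ contains $n$, so $A,B\in V_\Kleft\subseteq V_{K/n}$, and the inductive hypothesis applied to the combi $K/n$ on $Z_{n-1}$ produces one of (i)-(iv) there, which is then pulled back to $K$. The symmetric case $n\in X$ is handled by $1$-contraction (cf.\ the end of Section~\SSEC{contract}). The remaining case $n\in\{i,j\}$, say $n=j$ so that $B=Xn$, is settled by direct inspection of the $\ast n$-tiles of the $n$-strip $Q$ using the enumeration~\refeq{strip}: the position of $A$ relative to the left boundary of $Q$ and the fact that $B$ sits on $Q$ restrict the possible tiles adjacent to both $A,B$ to a short list, and each entry of the list yields one of (i)-(iv).

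The principal obstacle will be the pullback. Under $n$-contraction a vertex $V\in V_{K/n}$ may arise either from $V\in V_\Kleft$ of $K$ or, via the $-\xi_n$ shift of $\Kright$, from $Vn\in V_\Kright$, in which latter case only $Vn\in V_K$ is guaranteed; and a lens of $K/n$ comes only from a lens of $K$ that is \emph{not} of type $\ast n$, since lenses of type $\ast n$ are dissolved into zigzag paths with two triangle fillings by the L-Z transformation of step~(Q3). Consequently, ``case (i) in $K/n$'' pulls back to (i) in $K$ only when $X$ survives in $V_\Kleft$; when it does not, one uses $Xn\in V_K$ together with Proposition~\ref{pr:X-Xi} and the planarity of $Q$ to extract a lens of type $\ast n$ in $K$ whose upper or lower boundary contains both $A,B$, yielding (iv) or (iii) instead. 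The pullbacks of (ii), (iii), (iv) require analogous, though more involved, matchings with the $\ast n$-structure of $Q$; carrying these out without omission is the tedious but unavoidable core of the argument.
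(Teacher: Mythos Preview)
Your outline matches the paper's: induction on $n$ via the $n$-contraction $K'=K/n$. The difference is that the paper organizes the pullback entirely through the \emph{legal path} $P$ in $K'$ for which $K$ is the $n$-expansion of $K'$ (Theorem~\ref{tm:contr-expand}), and this single device makes the case analysis short rather than the ``tedious but unavoidable core'' you anticipate.

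Concretely, when $n\notin X$ (so $A,B\in G_1$) and induction yields $X\in V_{K'}$: either $X$ is not a pit of $P$ and survives in $V_K$ by~(E1), or $X$ is a pit, in which case $A=Xi$ and $B=Xj$ lie in the upper filling at $X$ and hence land on $U_\lambda$ for the lens $\lambda$ created from this pit by the Z--L step~(E3), giving~(iv) directly. The subcase $Xij\in V_{K'}$ is one line ($Xij$ cannot be a pit, since then $A,B$ would lie in $G_2$), and the lens subcases are immediate because $P$ consists only of V-edges, so any lens of $K'$ lies entirely in $K_1$ or $K_2$ and persists in $K$. The case $n\in X$ is handled \emph{still via $n$-contraction}, working with $A-n,\,B-n\in G_2$ and peaks of $P$ in place of pits; your proposed switch to $1$-contraction works by symmetry but is a detour. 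For $j=n$ the paper again uses $P$: here $A\in G_1$ and $X=B-n\in G_2$ are adjacent in $K'$ (by Proposition~\ref{pr:X-Xi}), so $X$ must lie on $P$, and one just checks whether it is a slope/peak (giving~(i)) or a pit (giving~(iv) via the new lens with $r_\lambda=Xn=B$). Your ``direct inspection of the $\ast n$-tiles of $Q$'' amounts to the same thing but without the bookkeeping that $P$ provides.

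Your opening ``easy sub-situations'' (H-edge adjacency, both on a lens arc, the mixed situation) are correct observations but play no role in the induction and can be dropped.
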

   \begin{proof}
Let for definiteness $i<j$. Like the proof of Proposition~\ref{pr:X-Xi}, we use
induction on $n$ and consider the $n$-contraction $K'$ of $K$. Then $K$ is the
$n$-expansion of $K'$ w.r.t. a legal path $P$ in $G_{K'}$. We first assume that
$j<n$ and consider two possible cases. \medskip

\noindent\underline{\emph{Case 1}}. ~Let $n\notin X$. Then $A,B$ are vertices
of $K'$ contained in the left subgraph $G_1$ of $G_{K'}$ (w.r.t. $P$). By
induction either (a) both $A,B$ belong to one boundary ($L_\lambda$ or
$U_\lambda$) of some lens $\lambda$ in $K'$, or (b) $K'$ contains $X=A\cap B$,
or (c) $K'$ contains $Xij=A\cup B$. In case~(a), $\lambda$ continues to be a
lens in $K$, yielding (iii) or (iv) in the proposition. In case~(b), if $X$ is
not a pit of $P$, then $X$ is a vertex of $K$, as required in~(i). And if $X$
is a pit of $P$, then $A,B$ belong to $\nabla$-tiles in the filling at $X$
w.r.t. $P$, and therefore $A,B$ become vertices of $U_\lambda$ for the lens
$\lambda$ arising in $K$ in place of the three-edge zigzag in $P$ containing
$X$; this yields (iv). In case~(c), $Xij$ cannot be a pit of $P$ (otherwise
$A,B$ would be not in $G_1$). Hence $Xij$ is a vertex of $K$, as required
in~(ii).
\medskip

\noindent\underline{\emph{Case 2}}. ~Let $n\in X$. Then $A':=A-n$ and $B':=B-n$
are vertices of $K'$ contained in the right subgraph $G_2$ of $G_{K'}$. By
induction either (a$'$) both $A',B'$ belong to one of $L_\lambda$, $U_\lambda$
for some lens $\lambda$ of $K'$, or (b$'$) $K'$ contains $A'\cap B'$, or (c$'$)
$K'$ contains $A'\cup B'$. Cases~(a$'$) and~(b$'$) are easy, as well us~(c$'$)
unless $A'\cup B'$ is a peak of $P$. And if $A'\cup B'$ is a peak of $P$, then
$A',B'$ belong to $\Delta$-tiles in the filling at $A'\cup B'$ w.r.t. $P$,
implying that $A=A'n$ and $B=B'n$ are vertices of $L_\lambda$ for the
corresponding lens $\lambda$ of $K$. \medskip

Now assume that $j=n$. Then $n\notin X$ and $n\notin A$. Hence $A$ is a vertex
of the subgraph $G_1$, and $B\cap[n-1]=X$ is a vertex of $G_2$ of $K'$. Also
$A=Xi$, and by Proposition~\ref{pr:X-Xi}, $K'$ has the V-edge $(X,A)$ of type
$i$. Consider two cases. \medskip

\noindent\underline{\emph{Case 3a}}. ~$X$ is a slope or a peak of $P$. Then $X$
is a vertex of $K$, yielding~(i) in the proposition. \medskip

\noindent\underline{\emph{Case 3b}}. ~$X$ is a pit of $P$. Then $A$ belongs to
a $\nabla$-tile in the filling at $X$ w.r.t. $P$, whence $A$ is a vertex of
$U_\lambda$ for the corresponding lens $\lambda$ of $K$. The right vertex
$r_\lambda$ of $\lambda$ is just of the form $Xn=B$, and we obtain~(iv) in the
proposition.
   \end{proof}

Like simple cyclic patterns in Sect.~\SEC{cyclic}, the sets $S_p$ are
identified with the corresponding points in the zonogon $Z=Z_n$, and we connect
each pair $S_{p-1},S_p$ by line segment $e_p$, obtaining the closed piecewise
linear curve $\zeta_\Sscr$ in $Z$. We direct each $e_p$ so as to be congruent
to the corresponding generator $\xi_i$ or vector $\eps_{ij}$.

A reasonable question arises: When $\zeta_\Sscr$ is non-self-intersecting?
Proposition~\ref{pr:XiXj} enables us to find necessary and sufficient
conditions in terms of ``forbidden quadruples'' in $\Sscr$. These conditions
are as follows (cf. Proposition~\ref{pr:non-self-int}).\smallskip

\noindent\textbf{(C3)} ~$S$ contains no quadruple $S_{p-1},S_p,S_{q-1},S_q$
such that either $\{S_{p-1},S_p\}=\{Xi,Xk\}$ and
$\{S_{q-1},S_q\}=\{Xj,X\ell\}$, or $\{S_{p-1},S_p\}=\{X-i,X-k\}$ and
$\{S_{q-1},S_q\}=\{X-j,X-\ell\}$, where $i<j<k<\ell$. \smallskip

\noindent\textbf{(C4)} ~$S$ contains no quadruple $S_{p-1},S_p,S_{q-1},S_q$
such that either $\{S_{p-1},S_p\}=\{Xi,Xk\}$ and $\{S_{q-1},S_q\}=\{X,Xj\}$, or
$\{S_{p-1},S_p\}=\{X-i,X-k\}$ and $\{S_{q-1},S_q\}=\{X,X-j\}$, where $i<j<k$.
 \smallskip

To prove the assertion below and for further purposes, we need to refine some
definitions. Fix a combi $K$ and consider a vertex $A$ in it. By the
\emph{(full) upper filling} at $A$ we mean the sequence
$\nabla(A|X_0X_1),\ldots, \nabla(A|X_{q-1}X_q)$ of all $\nabla$-tiles having
the bottom $A$ and ordered from left to right (i.e., $X_0X_1\ldots X_q$ is a
directed path in $G_K$). The union of these tiles is called the \emph{upper
sector} at $A$ and denoted by $\Sigmaup_A$, and the path $X_0X_1\ldots X_q$ is
called the \emph{upper boundary} of $\Sigmaup_A$ and denoted by $U_A$.
Symmetrically, the \emph{(full) lower filling} at $A$ is the sequence
$\Delta(A|Y_0Y_1),\ldots, \Delta(A|Y_{q'-1}Y_{q'})$ of all $\Delta$-tiles
having the top $A$ and ordered from left to right, the \emph{lower sector}
$\Sigmalow_A$ at $A$ is the union of these tiles, and the \emph{lower boundary}
$L_A$ of $\Sigmalow_A$ is the directed path $Y_0Y_1\ldots Y_{q'}$. Note that
one of these fillings or both may be empty.
  \begin{prop} \label{pr:non-self-int}
For a generalized cyclic pattern $\Sscr$, the curve $\zeta_\Sscr$ is
non-self-intersecting if and only if $\Sscr$ satisfies (C1)--(C4).
  \end{prop}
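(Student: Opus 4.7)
The plan is to prove both directions. Necessity follows from direct geometric observations: each forbidden quadruple in (C3) or (C4) forces two segments of $\zeta_\Sscr$ to cross in $Z_n$. Indeed, for any set $X$ and indices $i<j<k<\ell$ outside $X$, the points $Xi, Xj, Xk, X\ell$ all lie on the upper half of the unit circle centered at $X$, in clockwise order, so the chords $[Xi,Xk]$ and $[Xj,X\ell]$ interleave and cross (case (C3)(a)); the symmetric argument on the lower semicircle around $X$ gives (C3)(b). For (C4)(a), the V-edge $[X,Xj]$ is the radius from $X$ to a point strictly between $Xi$ and $Xk$ on the upper semicircle, and therefore crosses the chord $[Xi,Xk]$; (C4)(b) is symmetric. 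Condition (C1) is needed since a repeated element of $\Sscr$ forces $\zeta_\Sscr$ to revisit the same point of $Z_n$, and (C2) is already part of the definition of a generalized cyclic pattern.

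For sufficiency, I use (C2) to extend $\Sscr$ to a maximal w-collection, realized as the spectrum of some combi $K$ on $Z_n$ via Theorem~\ref{tm:max-ws-combi}. Each $1$-distance segment $[X,Xi]$ of $\zeta_\Sscr$ is then a V-edge of $K$ by Proposition~\ref{pr:X-Xi}, while each $2$-distance segment $[Xi,Xj]$ lies inside one of the four convex regions identified in Proposition~\ref{pr:XiXj}: an upper sector $\Sigmaup_X$ (case (i)), a lower sector $\Sigmalow_{Xij}$ (case (ii)), or the upper/lower half of a lens $\lambda$ cut by the diagonal $[\ell_\lambda,r_\lambda]$ (cases (iii)/(iv)). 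Convexity of each such region follows because the relevant vertices of $K$ all lie on a single unit circle in convex position. Since these regions are unions of distinct tiles (or are separated halves of the same lens), their interiors intersect only along shared boundary, so two segments of $\zeta_\Sscr$ that cross in their interiors must lie in a common such region.

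The case analysis then runs as follows. Two V-edges cannot cross interior-to-interior by planarity of $G_K$, and by (C1) they cannot share an endpoint as non-consecutive segments. A V-edge $[A,Ai]$ and a $2$-distance chord $\sigma$ can cross only when $\sigma$ lies in an upper or lower sector, since a lens has no interior V-edges; in that case $[A,Ai]$ must be a radius of that sector, forcing $i$ to lie strictly between the indices defining $\sigma$, which is exactly a (C4) configuration. Two $2$-distance chords in the same upper or lower sector cross iff their indices interleave on the respective arc, giving (C3). For two chords inside the same lens $\lambda$, the crucial point will be that the diagonal $[\ell_\lambda,r_\lambda]$ separates $U_\lambda$-chords (which lie weakly above it) from $L_\lambda$-chords (which lie weakly below it), so crossing chords must lie on the same side of the lens and once again interleave as in (C3). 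The main subtlety to verify is this last separation property, which I would establish from the fact that both $U_\lambda$ and $L_\lambda$ are convex polygonal arcs inscribed in their respective unit circles with common endpoints $\ell_\lambda,r_\lambda$; once this is in place, the argument closes.
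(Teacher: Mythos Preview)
Your proposal is correct and follows essentially the same route as the paper: embed $\Sscr$ in a combi $K$, realize each 1-segment as a V-edge (Proposition~\ref{pr:X-Xi}) and locate each 2-segment via Proposition~\ref{pr:XiXj} in a sector or lens-half, then argue that a crossing forces an interleaving configuration as in (C3) or (C4). The paper organizes the sufficiency direction slightly differently—it fixes one crossing segment $e_p$, applies Proposition~\ref{pr:XiXj} only to \emph{that} segment, and then asks directly what $e_q$ can be—whereas you set up a global ``regions tile the zonogon'' framework first; but the underlying case analysis is the same.

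Two small remarks. First, your convexity justification (``the relevant vertices all lie on a single unit circle'') is accurate for lens-halves but not literally for sectors, since $\Sigmaup_X$ also has the center $X$ as a vertex; convexity still holds, of course, because the remaining vertices lie on the upper semicircle and $X$ sits below them. Second, the lens-diagonal separation you flag as the ``main subtlety'' is indeed immediate from the fact that $U_\lambda$ lies on the upper semicircle about $\Oup_\lambda$ (hence weakly above the chord $[\ell_\lambda,r_\lambda]$) and $L_\lambda$ on the lower semicircle about $\Olow_\lambda$ (hence weakly below it), so the only common points of an upper chord and a lower chord are $\ell_\lambda$ or $r_\lambda$, which by (C1) can only be shared endpoints of consecutive segments.
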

 \begin{proof}
It is easy to see that if $\Sscr$ has a quadruple as in~(C3) or (C4), then the
corresponding segments (edges) $e_p$ and $e_q$ are crossing (have a common
interior point), and therefore $\zeta_\Sscr$ is self-intersecting.

Conversely, suppose that $\zeta_\Sscr$ is self-intersecting. To show the
existence of a pair as in~(C3) or~(C4), fix a combi $K$ with $V_K$ including
$\Sscr$. Since all sets in $\Sscr$ are different, $\zeta_\Sscr$ has two
crossing segments $e_p$ and $e_q$. The edges of $K$ are non-crossing (since $K$
is planar); therefore, at least one of $e_p, e_q$ is not an edge of $K$. Let
for definiteness $e_p$ be such, i.e., $\{S_{p-1},S_p\}$ is a 2-distance pair.

By Proposition~\ref{pr:XiXj}, at least one of the following takes place: (i)
$X:=S_{p-1}\cap S_p\in V_K$; (ii) $Y:=S_{p-1}\cup S_p\in V_K$; (iii) both
$S_{p-1},S_p$ belong to the same boundary, either $U_\lambda$ or $L_{\lambda}$,
for some lens $\lambda$ of $K$. In case (iii), the only possibility for $e_q$
to cross $e_p$ is when $\{S_{q-1},S_q\}$ is a 2-distance pair occurring in the
same boundary (either $U_\lambda$ or $L_\lambda$) of $\lambda$ where
$S_{p-1},S_p$ are contained; moreover, the elements of these two pairs should
be intermixing in this boundary. This gives a quadruple as in~(C3).

In case~(i), both vertices $S_{p-1},S_p$ (being of the form $Xi,Xk$ for some
$i,k$) lie in the boundary $U_X$ of the upper sector $\Sigmaup_X$ at $X$. Then
$e_q$ can cross $e_p$ only in two cases: (a) the pair $\{S_{q-1},S_q\}$ lies in
$U_X$ as well and, moreover, its elements and those of $\{S_{p-1},S_p\}$ are
intermixing in $U_X$ (yielding a quadruple as in~(C3)); and (b) one of
$S_{q-1},S_q$ is just $X$ while the other belongs to $U_X$ and, moreover, the
latter lies between $S_{p-1}$ and $S_p$ (yielding a quadruple as in~(C4)). The
case~(ii) is symmetric to~(i) and we argue in a similar way.
  \end{proof}

To extend Theorem~\ref{tm:cyc_pattern} to a generalized cyclic pattern
$\Sscr=(S_1,\ldots,S_r)$, we will consider one or another combi $K$ with
$\Sscr\subseteq V_K$. Unlike the case of simple cyclic patterns, it now becomes
less trivial to split $K$ into two subtilings $\Kin$ and $\Kout$ (lying in the
regions $\Rin$ and $\Rout$, respectively). A trouble is that some 2-segment
$e_p=[S_{p-1},S_p]$ may cut some tile $\tau$ of $K$ (i.e., $e_p$ and $\tau$
have an interior point in common), in contrast to 1-segments, which correspond
to V-edges and therefore cannot cut any tile of $K$. Hereinafter we refer to
the line segment $[S_{p-1},S_p]$ connecting points $S_{p-1},S_p$ in the zonogon
as a \emph{1-segment} (resp. \emph{2-segment}) if these points form a
1-distance (resp. 2-distance) pair.

We overcome this trouble by use of the \emph{splitting method} described below.
It works somewhat differently for lenses and for triangles. We use an important
fact that can be deduced from Proposition~\ref{pr:XiXj}: For a 2-segment
$[S_{p-1},S_p]$ cutting a tile $\tau$ of $K$, if $\tau$ is a lens, then both
vertices $S_{p-1},S_p$ belong to the boundary either $U_\tau$ or $L_\tau$; if
$\tau$ is a $\Delta$-tile $\Delta(A|BC)$, then $S_{p-1},S_p$ belong to the
lower boundary of the sector $\Sigma_A^{{\rm low}}$; and if $\tau$ is a
$\nabla$-tile $\nabla(A|BC)$, then $S_{p-1},S_p$ belong to the upper boundary
of $\Sigma_A^{{\rm up}}$.
\smallskip

\noindent \textbf{I.} ~First we consider a lens $\lambda$ of $K$ such that
$\zeta_\Sscr$ cuts $\lambda$ (otherwise there is no problem with $\lambda$ at
all). The curve $\zeta_\Sscr$ may go across $\lambda$ several times; let
$e_{p(1)},\ldots, e_{p(d)}$ be the 2-segments cutting $\lambda$. These segments
are pairwise non-crossing (by~(C3)) and subdivide $\lambda$ into $d+1$ polygons
$D_1,\ldots,D_{d+1}$; so each $e_{p(i)}$ is $D_j\cap D_{j'}$ for some $j,j'$
(and one of $D_j,D_{j'}$ lies in $\Rin$, and the other in $\Rout$).

If $\{S_{p-1},S_p\}=\{\ell_\lambda,r_\lambda\}$, we say that $e_{p(i)}$ is the
\emph{central segment}. Otherwise, both ends of $e_{p(i)}$ belong to either
$U_\lambda$ or $L_\lambda$; in the former (latter) case, $e_{p(i)}$ is called
an \emph{upper} (resp. \emph{lower}) \emph{segment}, and we associate to it the
path $P_{p(i)}$ in $U_\lambda$ (resp. $L_\lambda$) connecting $S_{p(i)-1}$ and
$S_{p(i)}$. In view of~(C3), such paths form a \emph{nested family}, i.e., for
any $i\ne i'$, either the interiors of $P_{p(i)}$ and $P_{p(i')}$ are disjoint,
or $P_{p(i)}\subset P_{p(i')}$, or $P_{p(i)}\supset P_{p(i')}$.

Accordingly, polygons $D_j$ can be of three sorts. When $D_j$ has all vertices
in $U_\lambda$, its lower boundary is formed by exactly one (upper or central)
segment (while its upper boundary is formed by some edges of $P_{p(i)}$ and
segments $e_{p(i')}$); we call such a $D_j$ an \emph{upper semi-lens}.
Symmetrically, when $D_j$ has all vertices in $L_\lambda$, its upper boundary
is formed by exactly one (lower or central) segment, and we call $D_j$ a
\emph{lower semi-lens}. Besides, when the central segment does not exist, there
appears one more polygon $D_j$; it is viewed as an (abstract) lens $\lambda'$
with $\ell_{\lambda'}=\ell_\lambda$ and $r_{\lambda'}=r_\lambda$, and we call
it (when exists) a \emph{secondary} lens.

A possible splitting of a lens $\lambda$ is illustrated in
Fig.~\ref{fig:split_lens}, where $d=4$ and the four cutting segments are
indicated by dotted lines.

\begin{figure}[htb]
\vspace{0cm}
\begin{center}
\includegraphics{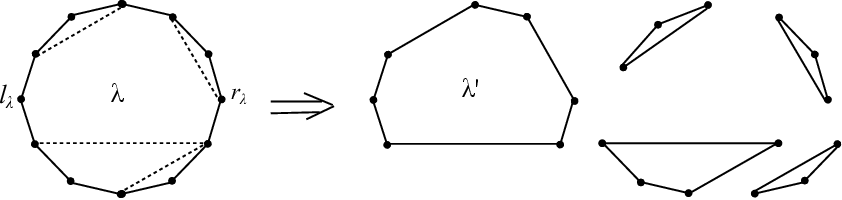}
\end{center}
\vspace{-0.5cm}
 \caption{Splitting a lens}
 \label{fig:split_lens}
  \end{figure}

\noindent \textbf{II.} ~Next suppose that $\zeta_\Sscr$ cuts some $\Delta$-tile
$\Delta(A|BC)$ of $K$. Then the lower sector $\Sigmalow_A$ at $A$ is cut by
some 2-segments in $\zeta_\Sscr$; let $e_{p(1)},\ldots,e_{p(d)}$ be these
2-segments. (Besides, $\Sigmalow_A$ can be cut by some 1-segments.) For each
segment $e_{p(i)}$, its ends $S_{p(i)-1}$ and $S_{p(i)}$ belong to the lower
boundary $L_A$ of the sector, and we denote by $P_{p(i)}$ the directed path in
$L_A$ connecting these vertices. Such paths form a nested family, by~(C3).

The sector $\Sigmalow_A$ is subdivided by the above 2-segments into $d+1$
polygons $D_1,\ldots,D_{d+1}$. Among these, $d$ polygons are lower semi-lenses,
each being associated with the segment $e_{p(i)}$ forming its upper boundary.
The remaining polygon contains the vertex $A$ and is viewed as a lower sector
$\Sigma'$ whose lower boundary $L(\Sigma')$ is formed by the segments
$e_{p(i)}$ with $P_{p(i)}$ maximal and the edges of $L_A$ between these
segments. We fill $\Sigma'$ with the corresponding $\Delta$-tiles (so each edge
$(B,C)$ of $L(\Sigma')$ generates one $\Delta$-tile, namely $(A|BC)$). We call
them \emph{secondary} $\Delta$-tiles.

In view of (C4), each 1-segment $e_i$ cutting $\Sigmalow_A$ connects some
vertex of $L(\Sigma')$ and the top $A$, and therefore $e_i$ coincides with the
common V-edge of some two neighboring $\Delta$-tiles in the filling of
$\Sigma'$.

A possible splitting of a sector by three 2-segments and one 1-segment (drawn
by dotted lines) is illustrated in Fig.~\ref{fig:split_sec}.

\begin{figure}[htb]
\vspace{0cm}
\begin{center}
\includegraphics{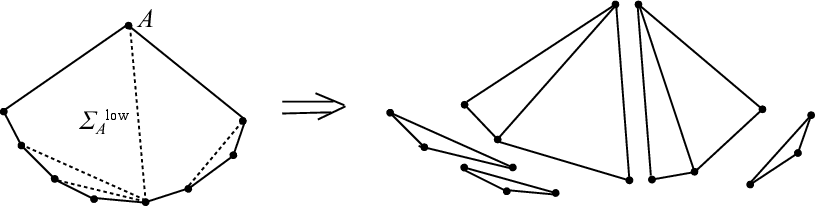}
\end{center}
\vspace{-0.5cm}
 \caption{Splitting a sector}
 \label{fig:split_sec}
  \end{figure}

When $\zeta_\Sscr$ cuts some $\nabla$-tile $\nabla(A|BC)$, we consider the
upper sector $\Sigmaup_A$ and make a splitting in a similar way (which
subdivides $\Sigmaup_A$ into corresponding upper semi-lenses and secondary
$\nabla$-tiles).

Let $\hat K$ be the resulting set of tiles upon termination of the splitting
process for $K$ w.r.t. $\zeta_\Sscr$ (it consists of the tiles of $K$ not cut
by $\zeta_\Sscr$ and the appeared semi-lenses and secondary tiles). Then the
desired $\Kin$ ($\Kout$) is defined to be the set of tiles of $\hat K$ lying in
$\Rin$ (resp. $\Rout$). We refer to $\hat K$, $\Kin$, $\Kout$ as
\emph{quasi-combies} on $Z$, $\Rin$, $\Rout$, respectively, \emph{agreeable
with} $\Sscr$. \smallskip

Now we are ready to generalize Theorem~\ref{tm:cyc_pattern}. As before, the
domain $\Din$ ($\Dout$) consists of the sets (points) $X\subseteq[n]$ such that
$X\weak \Sscr$ and $X$ lies in $\Rin$ (resp. $\Rout$).
    \begin{theorem} \label{tm:gen_cyc_pattern}
Let $\Sscr$ be a generalized cyclic pattern satisfying (C1)--(C4). Then the
domains $\Din$ and $\Dout$ form a complementary pair. As a consequence, both
$\Din$ and $\Dout$ are w-pure.
 \end{theorem}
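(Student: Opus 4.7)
My plan is to adapt the argument proving Theorem~\ref{tm:cyc_pattern} to the generalized setting, using the splitting method just described and the notion of quasi-combies agreeable with $\Sscr$. Given $X\in\Din$ and $Y\in\Dout$, I would first invoke Theorem~\ref{tm:max-ws-combi} to extend $\Sscr\cup\{X\}$ to the spectrum $V_K$ of some combi $K$ on $Z_n$, and similarly obtain a combi $K'$ with $\Sscr\cup\{Y\}\subseteq V_{K'}$. Next, apply the splitting method to each of $K,K'$ with respect to $\zeta_\Sscr$, producing quasi-combies $\hat K=\Kin\cup\Kout$ and $\hat K'=\Kpin\cup\Kpout$ agreeable with $\Sscr$. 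Since splitting only reorganizes tiles without introducing vertices, $V_{\Kin}=V_K\cap\Rin$ contains $X$ and $V_{\Kpout}=V_{K'}\cap\Rout$ contains $Y$. The candidate witness is the combined tiling $\tilde K:=\Kin\cup\Kpout$ on $Z_n$.

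The heart of the argument is to show that $\tilde K$ is itself a quasi-combi agreeable with $\Sscr$---equivalently, that $\tilde K$ arises as the result of applying the splitting method to some combi $K''$ on $Z_n$ containing $\Sscr$. Granted this, $V_{\tilde K}\subseteq V_{K''}$ and Theorem~\ref{tm:combi-max-ws} guarantees that $V_{\tilde K}$ is weakly separated; in particular $X\weak Y$, the complementary-pair condition. The hard part will be the local verification along each 2-segment $e_p=[S_{p-1},S_p]=[Xi,Xj]$ of $\zeta_\Sscr$. By Proposition~\ref{pr:XiXj}, in $K$ one of the four alternatives~(i)--(iv) must hold at $e_p$, and the splitting produces a determinate local structure on each side (a secondary $\nabla$-sector with apex $X$, a secondary $\Delta$-sector with apex $Xij$, or upper/lower semi-lenses inherited from a lens of $K$); the same applies to $K'$. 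The verification amounts to a case analysis over the possible combinations of alternatives for $K$ and $K'$, checking that the piece of $\Kin$ on the $\Rin$-side of $e_p$ and the piece of $\Kpout$ on the $\Rout$-side together form the restriction near $e_p$ of some combi's splitting. The easy combinations yield a genuine combi structure at once (e.g.\ $\nabla(X|Xi,Xj)$ below meeting $\Delta(Xij|Xi,Xj)$ above, or two matching semi-lenses combining into a proper lens); the mixed combinations are handled by refining a semi-lens through the adjunction of secondary triangles with apex $X$ or $Xij$ dictated by the opposite side. Condition~(C3) (nested 2-segments within a single lens or sector) and condition~(C4) (no 1-segment crossing a 2-segment) exclude any global conflicts among distinct cuts, and along 1-segments no new issue arises since such segments are V-edges common to $K$ and $K'$ by Proposition~\ref{pr:X-Xi}, reducing locally to the argument in the proof of Theorem~\ref{tm:cyc_pattern}.

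Having obtained $X\weak Y$ for arbitrary $X\in\Din$ and $Y\in\Dout$, the pair $(\Din,\Dout)$ is complementary. The union $\Din\cup\Dout=\Dscr_\Sscr$ is w-pure because any maximal w-collection $\Fscr\subseteq\Dscr_\Sscr$ automatically contains $\Sscr$ (as $\Sscr\cup\Fscr$ is weakly separated and still lies in $\Dscr_\Sscr$), and any set weakly separated from $\Fscr$ is in particular weakly separated from $\Sscr$ and thus lies in $\Dscr_\Sscr$; hence $\Fscr$ is already maximal in $2^{[n]}$, whose w-purity (Theorem~\ref{tm:LZconj}) forces $|\Fscr|=\binom{n+1}{2}+1$. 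Proposition~\ref{pr:complement} then delivers the w-purity of $\Din$ and $\Dout$ individually, completing the proof.
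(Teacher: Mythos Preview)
Your overall strategy coincides with the paper's: extend $\Sscr\cup\{X\}$ and $\Sscr\cup\{Y\}$ to combies $K,K'$, split each along $\zeta_\Sscr$, form $\tilde K=\Kin\cup\Kpout$, and argue that the vertex set of $\tilde K$ is that of a genuine combi. The divergence is in how this last step is carried out. You propose to show that $\tilde K$ is literally the splitting of some combi $K''$, via a local case analysis at each 2-segment over the possible pairs of alternatives~(i)--(iv) from Proposition~\ref{pr:XiXj} for $K$ versus $K'$. The paper instead runs a short iterative procedure directly on $\tilde K$: pick any semi-lens $\lambda$, delete the single edge $e$ forming its one-edge boundary, and merge $\lambda$ with the adjacent tile $\tau$ across $e$; depending on whether $\tau$ is a triangle, a lens or same-type semi-lens, or an opposite-type semi-lens, the merged polygon is refilled with triangles or becomes a (semi-)lens. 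Repeating eliminates all semi-lenses and produces a correct combi on $Z_n$ with the same vertex set, whence $X\weak Y$.

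The paper's merging procedure is more economical: it sidesteps the enumeration of mixed combinations and avoids the delicate claim that $\tilde K$ is \emph{already} a quasi-combi agreeable with $\Sscr$ (which, as your own ``mixed combinations'' clause concedes, generally fails before some local refinement). Your route would work in principle but needs that refinement made precise, including the interaction when a single lens is cut by several 2-segments. Your final paragraph, showing that $\Dscr_\Sscr=\Din\cup\Dout$ is w-pure because every maximal w-collection in it must contain $\Sscr$ and is therefore already maximal in $2^{[n]}$, is correct and spells out a point the paper treats as understood.
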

  \begin{proof}
Given arbitrary $X\in\Din$ and $Y\in\Dout$, take a combi $K$ with $V_K$
including $\Sscr\cup\{X\}$ and a combi $K'$ with $V_{K'}$ including
$\Sscr\cup\{Y\}$. Split $K$ into the corresponding quasi-combies $\Kin$ and
$\Kout$ on $\Rin$ and $\Rout$, respectively, and similarly, split $K'$ into
quasi-combies $\Kpin$ and $\Kpout$. Then $X$ is a vertex in $\Kin$, and $Y$ is
a vertex in $\Kpout$. Their union $\tilde K:=\Kin\cup \Kpout$ is a quasi-combi
on $Z$. We transform $\tilde K$, step by step, in order to obtain a correct
combi with the same vertex set.

More precisely, in a current $\tilde K$ choose a semi-lens $\lambda$. If
$\lambda$ is a lower semi-lens with $\ell_\lambda=X$ and $r_\lambda=Y$, then
its upper boundary is formed by the unique edge $e=(X,Y)$, and this edge
belongs to another tile $\tau$ in $\tilde K$. Three cases are possible: (a)
$\tau$ is a $\Delta$-tile (and $e$ is its base); (b) $\tau$ is a lens or a
lower semi-lens (and $e$ belongs to its lower boundary $L_\tau$); and (c)
$\tau$ is an upper semi-lens (and $e$ forms its lower boundary). We remove the
edge $e$, combining $\lambda$ and $\tau$ into one polygon $\rho$.

In case (a), $\rho$ looks like an upper sector, and we fill it with the
corresponding $\Delta$-tiles. In case~(b), $\rho$ is again a lens or a lower
semi-lens (like $\tau$). In case~(c), $\rho$ is a lens. The new $\tilde K$ is a
quasi-combi on $Z$, and the number of semi-lenses becomes smaller.

If $\lambda$ is an upper semi-lens, it is treated symmetrically.

We repeat the procedure for the current $\tilde K$, and so on, until we get rid
of all semi-lenses. Then the eventual $\tilde K$ is a correct combi containing
both $X,Y$, and the result follows.
  \end{proof}


\subsection{Planar graph patterns} \label{ssec:graph_pat}

We can further extend the w-purity result by considering an arbitrary graph
$\Hscr=(\Sscr,\Escr)$ with the following properties: \smallskip

\noindent (H1) the vertex set $\Sscr$ is a w-collection in $2^{[n]}$;
\smallskip

\noindent (H2) each edge $e\in \Escr$ is formed by a 1- or 2-distance pair in
$\Sscr$; \smallskip

\noindent (H3) the edges of $\Hscr$ obey (C3) and (C4), in the sense that there
are no quadruple of vertices of $\Hscr$ that can be labeled as
$S_{p-1},S_{p},S_{q-1},S_q$ so that both $\{S_{p-1},S_p\}$ and
$\{S_{q-1},S_q\}$ are edges of $\Hscr$ and they behave as indicated in~(C3)
or~(C4).
\smallskip

Representing the vertices of $\Hscr$ as corresponding points in $Z=Z_n$, and
the edges as line segments, we observe from~(H3) and the proof of
Proposition~\ref{pr:non-self-int} that the graph $\Hscr$ is planar (has a
planar layout in $Z$). Let $\Fscr$ be the set of its (closed 2-dimensional)
faces. (W.l.o.g., we may assume that $\Hscr$ includes the entire boundary of
$Z$, since $bd(Z)$ is weakly separated from any subset of $[n]$.) For a face
$F\in \Fscr$, the set of elements of $\Dscr_\Sscr$ contained in $F$ is denoted
by $\Dscr_\Sscr(F)$.

   \begin{theorem} \label{tm:gen-gen}
Let $\Hscr$ be a graph satisfying (H1)--(H3). Then for any two different faces
$F,F'$ of $\Hscr$, the domains $\Dscr_\Sscr(F)$ and $\Dscr_\Sscr(F')$ form a
complementary pair. As a consequence, $\Dscr_\Sscr(F)$ is w-pure for each face
$F$, and similarly for any union of faces in $\Hscr$.
  \end{theorem}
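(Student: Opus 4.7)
My proof will extend the argument for Theorem~\ref{tm:gen_cyc_pattern} from a single cyclic pattern to the full planar graph $\Hscr$. The first step is to adapt the splitting method of Section~\ref{ssec:gen_cyc_pat} so that it works with respect to $\Hscr$ rather than with respect to a single curve $\zeta_\Sscr$. Given any combi $K$ with $\Sscr \subseteq V_K$, each edge of $\Hscr$ is either a 1-segment, which by Proposition~\ref{pr:X-Xi} coincides with an edge of $K$, or a 2-segment, which by Proposition~\ref{pr:XiXj} lies entirely within a single lens or a single upper/lower sector of $K$. Condition (H3) then ensures that, within any such local structure, the family of 2-segments cutting it is nested, so the local subdivision into semi-lenses and secondary triangles described in~(I),(II) of Section~\ref{ssec:gen_cyc_pat} goes through unchanged. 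The result is a quasi-combi $\hat K$ each of whose tiles lies in a unique face $F \in \Fscr$; I denote the corresponding subtiling on $F$ by $K^F$.

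The second step is the exchange argument. Given two combies $K_1, K_2$ with $\Sscr \subseteq V_{K_1} \cap V_{K_2}$ and any assignment $\phi\colon \Fscr \to \{1,2\}$, form $\tilde K := \bigcup_{F \in \Fscr} K_{\phi(F)}^F$. Because the splitting is intrinsic to $\Hscr$ and does not depend on the starting combi, the pieces $K_1^F$ and $K_2^{F'}$ match along the common edges of $\Hscr$, so $\tilde K$ is a well-defined quasi-combi on $Z$. Then, exactly as in the proof of Theorem~\ref{tm:gen_cyc_pattern}, the semi-lens elimination procedure converts $\tilde K$ into a correct combi with the same vertex set. To establish the complementary-pair assertion, given $X \in \Dscr_\Sscr(F)$ and $Y \in \Dscr_\Sscr(F')$ with $F \ne F'$, I choose combies $K_1$ with $\Sscr \cup \{X\} \subseteq V_{K_1}$ and $K_2$ with $\Sscr \cup \{Y\} \subseteq V_{K_2}$ via Theorem~\ref{tm:max-ws-combi}, set $\phi(F) := 1$ and $\phi(F') := 2$, and observe that $X$ is a vertex of $K_1^F$ while $Y$ is a vertex of $K_2^{F'}$; both survive in the converted combi, so Theorem~\ref{tm:combi-max-ws} yields $X \weak Y$.

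For the w-purity consequence, I first note that $\bigcup_{F \in \Fscr} \Dscr_\Sscr(F) = \Dscr_\Sscr$ is w-pure: every maximal w-collection in $\Dscr_\Sscr$ that extends $\Sscr$ is also maximal in $2^{[n]}$, so w-purity of $\Dscr_\Sscr$ follows from w-purity of $2^{[n]}$ (Theorem~\ref{tm:LZconj}). Given the complementary-pair property just established, applying Proposition~\ref{pr:complement} with $\Dscr := \Dscr_\Sscr(F)$ and $\Dscr' := \bigcup_{F' \ne F} \Dscr_\Sscr(F')$ yields the w-purity of each $\Dscr_\Sscr(F)$, and the same argument applied to an arbitrary union of faces delivers the final assertion. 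The main obstacle I anticipate is confirming that (H3) is exactly what is needed for the global splitting: a single lens or sector of $K$ may now be cut by edges of $\Hscr$ arising from several different local cyclic patterns simultaneously, and the nested arrangement required in~(I),(II) must persist; verifying that the global forbidden-quadruple condition~(H3) rules out precisely the configurations that would destroy this nestedness is the technical heart of the argument.
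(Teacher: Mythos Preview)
Your argument is correct, but it takes a genuinely different route from the paper's. The paper's proof is a two-line reduction: given $X\in\Dscr_\Sscr(F)$ and $Y\in\Dscr_\Sscr(F')$ with $F\ne F'$, one simply observes that there is a simple cycle $\Cscr$ in $\Hscr$ separating $X$ from $Y$; this $\Cscr$ is a generalized cyclic pattern satisfying (C1)--(C4), and Theorem~\ref{tm:gen_cyc_pattern} applied to $\Cscr$ immediately gives $X\weak Y$. No new splitting machinery is needed.

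Your approach instead lifts the entire splitting/gluing technique of Section~\ref{ssec:gen_cyc_pat} from a single curve to the whole planar graph $\Hscr$, producing a face-indexed decomposition $\hat K=\bigcup_F K^F$ of any combi $K\supseteq\Sscr$ and then a global mix-and-match via an assignment $\phi\colon\Fscr\to\{1,2\}$. This is more laborious (you must verify that (H3) forces the nestedness required in~(I),(II) when segments coming from \emph{all} edges of $\Hscr$ land in the same lens or sector, and that the semi-lens elimination still terminates when semi-lenses arise from many edges at once), but it is sound, and it has a payoff: it directly establishes the stronger ``reformulation'' that the paper states right after Theorem~\ref{tm:gen-gen}, namely that one may independently choose a maximal w-collection in each $\Dscr_\Sscr(F)$ and their union is again a maximal w-collection. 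The paper's reduction only yields the pairwise complementary property and leaves that reformulation as an equivalent restatement. So your route costs more work but buys a slightly more constructive, simultaneously-over-all-faces conclusion; the paper's route is the quicker one for the theorem as stated.
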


(When $\Hscr$ is a simple cycle, this turns into
Theorem~\ref{tm:gen_cyc_pattern}. Also this generalizes the result on
semi-simple cyclic patterns in Sect.~\SSEC{semi-simple}.)
\smallskip

  \begin{proof}
If sets (points) $X,Y$ lie in $F,F'$, respectively, then they are separated by
the curve corresponding to some simple cycle $\Cscr$ in $\Hscr$. This $\Cscr$
is, in fact, a generalized cyclic pattern obeying conditions~(C1)--(C4). Also
one of $X,Y$ lies in the region $R_\Cscr^{\rm in}$, and the other in
$R_\Cscr^{\rm out}$. Now the result follows from
Theorem~\ref{tm:gen_cyc_pattern}.
  \end{proof}

One can reformulate this theorem as follows: For each face $F$ of $\Hscr$, take
an arbitrary maximal w-collection $\Xscr_F$ in $\Dscr_\Sscr(F)$; then for any
set $\Fscr'$ of faces of $\Hscr$, ~$\cup(\Xscr_F\colon F\in \Fscr)$ is a
maximal w-collection in $\cup(\Dscr_\Sscr(F) \colon F\in\Fscr')$.

Theorem~\ref{tm:gen-gen} was originally stated and proved in~\cite{DKK4}. An
alternative proof can be given based on nice properties of mutations of
w-collections in a discrete Grassmannian, established in a subsequent work of
Oh and Speyer~\cite{OS}.

We conclude this paper with a special case of generalized cyclic patterns,
namely a \emph{Grassmann necklace} of~\cite{OPS}. This is a sequence
$\Nscr=(S_1,S_2,\ldots, S_n=S_0)$ of sets in $\Delta_n^m$ such that
$S_{i+1}-S_i=\{i\}$ for each $i$. One can check that $\Nscr$ is a w-collection
satisfying~(C3). As is shown in~\cite{OPS}, the domain $\Dscr^{\rm in}_\Nscr$
is w-pure. A sharper result in~\cite{DKK3} says that the domains $\Dscr^{\rm
in}_\Nscr$ and $\Dscr^{\rm out}_\Nscr\cap \Delta_n^m$ form a complementary pair
within the hyper-simplex $\Delta_n^m$. This is a special case of
Theorem~\ref{tm:gen-gen}. Indeed, take as $\Hscr$ the union of the (natural)
cycle $\Cscr$ on $\Nscr$ and the cycle $\Cscr_0$ on the ``maximal'' necklace
(formed by the intervals and co-intervals if size $m$ in $[n]$). Then $\Hscr$
has the face surrounded by $\Cscr$ (giving the domain $\Dscr^{\rm in}_\Nscr$)
and the face (or the union of several faces) ``lying between'' $\Cscr$ and
$\Cscr_0$ (giving the domain $\Dscr^{\rm out}_\Nscr\cap \Delta_n^m$). \medskip

\noindent\textbf{Acknowledgements} ~We thank the anonymous referees for useful
remarks and suggestions. Supported in part by grant RSF 16-11-10075.



\begin{thebibliography}{99}

 %
\bibitem{DKK1} V.I.~Danilov, A.V.~Karzanov and G.A.~Koshevoy, Pl\"ucker environments,
wiring and tiling diagrams, and weakly separated set-systems,
\textsl{Adv.~Math.} \textbf{224} (2010) 1--44.
 %
\bibitem{DKK2} V.I.~Danilov, A.V.~Karzanov and G.A.~Koshevoy, On maximal
weakly separated set-systems, \textsl{J. Algebr. Comb.} \textbf{32} (2010)
497--531. See also \textsl{arXiv}:0909.1423v1[math.CO] (2009).
 %
\bibitem{DKK3} V.I.~Danilov, A.V.~Karzanov and G.A.~Koshevoy, The purity of
separated set-systems related to Grassmann necklaces,
\textsl{arXiv}:1312.3121[math.CO] (2013).
 %
\bibitem{DKK4} V.I.~Danilov, A.V.~Karzanov and G.A.~Koshevoy,
Combined tilings and the purity phenomenon on separated set-systems,
\textsl{arXiv}:1401.6418 [math.CO] (2014).
 %
\bibitem{LZ} B.~Leclerc and A.~Zelevinsky: Quasicommuting families of
quantum Pl\"ucker coordinates, \textsl{Am. Math. Soc. Trans. Ser.~2} ~{\bf 181}
(1998) 85--108.
 %
\bibitem{OPS} S.~Oh, A.~Postnikov, and D.E.~Speyer, Weak separation and plabic
graphs, {\textsl arXiv}:1109.4434[math.CO] (2011).
 %
\bibitem{OS} S.~Oh and D.E.~Speyer, Links in the complex of weakly separated collections,
{\textsl arXiv}:1405.5191[math.CO] (2014).
 %
\bibitem{Post} A.~Postnikov, Total positivity, Grassmannians, and networks,
\textsl{arXiv}:math.CO/0609764 (2006).

\end{thebibliography}
 \end{document}